\definecolor{darkblue}{rgb}{0.0,0.0,0.3}
\theoremstyle{plain}
\newtheorem{theorem}{Theorem}[section]
\newtheorem*{theorem*}{Theorem}
\newtheorem{lemma}[theorem]{Lemma}
\newtheorem{proposition}[theorem]{Proposition}
\newtheorem*{proposition*}{Proposition}
\newtheorem{corollary}[theorem]{Corollary}
\newtheorem*{corollary*}{Corollary}
\theoremstyle{definition}
\newtheorem{remark}[theorem]{Remark}
\numberwithin{equation}{section}
\renewcommand{\Im}{\operatorname{Im}}
\renewcommand{\Re}{\operatorname{Re}}
\DeclareMathOperator*{\Res}{Res}
\DeclareRobustCommand{\mhl}[1]{%
  \ifmmode\text{\hl{$#1$}}\else\hl{#1}\fi
}
\title{Self-Correlations of Hurwitz Class Numbers}
\author{Alexander Walker}
\begin{document}

\begin{abstract}
The asymptotic study of class numbers of binary quadratic forms is a foundational problem in arithmetic statistics. Here, we investigate finer statistics of class numbers by studying their self-correlations under additive shifts. Specifically, we produce uniform asymptotics for the shifted convolution sum $\sum_{n < X} H(n) H(n+\ell)$ for fixed $\ell \in \mathbb{Z}$, in which $H(n)$ denotes the Hurwitz class number. 
\end{abstract}

\maketitle

\section{Introduction}

The study of class numbers of binary quadratic forms has a rich history, dating back to Lagrange and Gauss. In \emph{Disquitiones Arithmeticae}, Gauss made several conjectures about the distribution of class numbers, including the famous statement that the class number $h(-D)$ of binary quadratic forms of discriminant $-D$ should diverge to infinity as $D \to \infty$. Gauss' conjecture was established by Heilbronn~\cite{Heilbronn34}, with effective lower bounds first obtained through the combined work of~\cite{Goldfeld76} and~\cite{GrossZagier83}.

Moment estimates for class numbers have been studied by many authors, often using Dirichlet's class number to reduce the problem to estimates for families of quadratic Dirichlet $L$-functions at the special point $1$. For example, Wolke~\cite{Wolke72} proves that
\begin{align} \label{eq:class-number-moment}
	\sum_{n \leq X} \widetilde{h}(-n)^\alpha = c(\alpha) X^{1+\frac{\alpha}{2}} 
	+ O_\alpha\big(X^{1+\frac{\alpha}{2} - \frac{1}{4}}\big)
\end{align}
for fixed $\alpha > 0$, where $\widetilde{h}(-n)$ denotes the number of classes of \emph{primitive} binary quadratic forms of discriminant $-n$.  Later work of Granville and Soundararajan~\cite{GranvilleSoundararajan03} implies that the main term in~\eqref{eq:class-number-moment} holds with some uniform error for any $\alpha \ll \log X$.

In comparison, shifted convolution estimates for class numbers are far less understood. Recent work of Kumaraswamy~\cite{Kumaraswamy18} considers
\[
	D(X,\ell) := \sideset{}{^\flat}\sum_{n \leq X} h(-n) h(-n-\ell),
\]
in which $\sum^\flat$ denotes restriction to $n$ such that both $-n$ and $-n-\ell$ are fundamental discriminants, with neither congruent to $1 \bmod 8$. Kumaraswamy applies the circle method to prove that
\[
	D(X,\ell) = c_\ell X^{\frac{3}{2}}(X+\ell)^\frac{1}{2}
		+ O_\epsilon\big( X^{\frac{3}{2} - \frac{1}{30}} (X+ \ell)^{\frac{1}{2} + \frac{1}{60} + \epsilon}\big)
\]
for $\ell \geq 1$ and all $\epsilon > 0$, uniformly in $\ell$ (cf.~\cite[Theorem 1.1]{Kumaraswamy18}). For fixed $\ell$, this gives a power-saving of $O(X^{1/60-\epsilon})$ in the error term.

Unfortunately, the peculiar restriction to $n, n+\ell \not\equiv 1 \bmod 8$ in~\cite{Kumaraswamy18} is essential, as this work uses the identity
\begin{align} \label{eq:r3-h-identity}
	r_3(n)  = 12 \Big( 1 - \Big(\frac{-n}{2}\Big)\Big) h(-n),
\end{align}
(cf.~\cite[Proposition 5.3.10]{Cohen93})
to relate the class number to the Kronecker symbol and $r_3(n)$, the number of representations of $n$ as a sum of $3$ squares, which holds when $-n$ is a fundamental discriminant. Since $(\frac{-n}{2}) = 1$ for $n \equiv \pm 1 \bmod 8$, the identity~\eqref{eq:r3-h-identity} gives no information about $h(-n)$ on the residue class $-n \equiv 1 \bmod 8$. 

This article presents an alternative method for studying correlations of class numbers, via the spectral theory of automorphic forms. In this setting, it is convenient to consider a version of the class number $h(-n)$ called the Hurwitz class number $H(n)$, in which the classes containing a multiple of $x^2+y^2$ or $x^2-xy+y^2$ are weighted by $\frac{1}{2}$ and $\frac{1}{3}$, respectively. By convention, we set $H(0) = - \frac{1}{12}$.
Hurwitz class numbers feature, for one example, in Eichler--Selberg ``class number relation'' formulas, such as
\begin{align} \label{eq:class-number-relation}
	\sum_{m \in \mathbb{Z}} H(4n-m^2) = 2 \sigma_1(n) + \sum_{d \mid n} \min\big(d, \frac{n}{d}\big),
\end{align}
which appear in the work of Kronecker and Hurwitz. Here, $\sigma_\nu(n) = \sum_{d \mid n} d^\nu$.

More recently, Zagier~\cite{Zagier75} showed that Hurwitz class numbers arise as the coefficients of a mock modular form. Specifically, Zagier proved that
\begin{align} \label{eq:H-definition}
	\mathcal{H}(z) := 
		\sum_{n \geq 0} H(n) e(nz) + \frac{1}{8\pi \sqrt{y}}
		+ \sum_{n \geq 1} \frac{n \,\Gamma(-\tfrac{1}{2}, 4\pi n^2 y)}{4\sqrt{\pi}} e(-n^2 z)
\end{align}
defines a harmonic Maass form of weight $\frac{3}{2}$ on $\Gamma_0(4)$. Here, $z=x+iy$, $e(z) = e^{2\pi i n z}$, and $\Gamma(\beta, y)$ denotes the incomplete gamma function. In particular, one may study Hurwitz class numbers using automorphic forms.

In this article, we leverage the analytic theory of harmonic Maass forms and mock modular forms to study the shifted convolution Dirichlet series
\begin{align} \label{eq:D_h-definition}
	D_\ell(s) :=
		\sum_{n \geq 1} \frac{H(n) H(n+\ell)}{(n+\ell)^{s+\frac{1}{2}}},
\end{align}
where $\ell \geq 1$ is a fixed integer. We prove that $D_\ell(s)$ admits meromorphic continuation to $s \in \mathbb{C}$ and use this information to study the self-correlations of Hurwitz class numbers under additive shifts. Our main theorem is the following result.

\begin{theorem} \label{thm:intro-main-theorem}
Let $\sigma_\nu(m) = \sum_{d \mid m} d^\nu$ denote the sum-of-divisors function, with the convention that $\sigma_\nu(m) = 0$ for $m \notin \mathbb{Z}$. Fix $\ell \geq 1$ and let $\ell_o$ denote the odd part of $\ell$. Then, for all $\epsilon > 0$, we have
\begin{align*}
	&\sum_{n \leq X} H(n) H(n+\ell) \\[-.2em]
	& \qquad 	= \frac{\pi^2 X^2}{252\zeta(3)}
				\big(2\sigma_{-2}(\tfrac{\ell}{4}) - \sigma_{-2}(\tfrac{\ell}{2})  +\sigma_{-2}(\ell_o) \big) 
	+ O_\epsilon\big(X^{\frac{5}{3}+\epsilon} 
		 + X^{1+\epsilon} \ell \big).
\end{align*}
\end{theorem}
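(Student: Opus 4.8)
The plan is to realize the generating series $D_\ell(s)$ as a regularized Rankin--Selberg--Poincar\'e integral built from Zagier's form $\mathcal{H}$, and then to read off the analytic behaviour of $D_\ell(s)$ from the spectral theory of $\Gamma_0(4)$. Decompose $\mathcal{H} = \mathcal{H}^+ + \mathcal{H}^-$ as in~\eqref{eq:H-definition}, with holomorphic (mock-modular) part $\mathcal{H}^+(z) = \sum_{n \ge 0} H(n) e(nz)$ and non-holomorphic completion $\mathcal{H}^-$. Because $\mathcal{H}$ transforms with weight $\tfrac32$ under the unitary theta-multiplier on $\Gamma_0(4)$, the function $G(z) := \mathcal{H}(z)\overline{\mathcal{H}(z)}\,y^{3/2}$ is a genuine $\Gamma_0(4)$-invariant, real-analytic function of weight $0$; from~\eqref{eq:H-definition} and the large-argument asymptotics of $\Gamma(-\tfrac12,\cdot)$ one checks that $G$ grows like a constant multiple of $y^{3/2}$, with a subleading multiple of $y$, at each of the three cusps of $\Gamma_0(4)$ (the constant being $\tfrac1{144}$ at $\infty$, since $H(0)=-\tfrac1{12}$). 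I would introduce a weight-$0$ real-analytic Poincar\'e series $P_\ell(z,s) = \sum_{\gamma \in \Gamma_\infty \backslash \Gamma_0(4)} \Im(\gamma z)^{s}\, e(\ell\,\gamma z)$, convergent for $\Re(s) > 1$, and unfold the (suitably regularized) pairing $\langle G, P_\ell(\cdot,\bar s)\rangle$ over $\Gamma_\infty \backslash \mathbb{H}$: the $x$-integral isolates the $\ell$-th Fourier coefficient of $G$, and the $y$-integral is an elementary Gamma integral. The $\mathcal{H}^+\overline{\mathcal{H}^+}$ part of $G$ then contributes exactly $\tfrac{\Gamma(s+1/2)}{(4\pi)^{s+1/2}} D_\ell(s)$, while the remaining pieces of $G$ --- those involving $\mathcal{H}^-$, whose frequencies lie in $\{0\}\cup\{-m^2 : m\ge 1\}$ --- contribute only finitely many explicit terms, linear combinations of $H(\ell-m^2)$, $H(m^2-\ell)$ and $H(\ell)$ against incomplete-Gamma factors. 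Thus
\[
	D_\ell(s) \;=\; \frac{(4\pi)^{s+1/2}}{\Gamma(s+1/2)}\,\bigl\langle G,\, P_\ell(\cdot,\bar s)\bigr\rangle \;+\; \mathcal{E}_\ell(s),
\]
with $\mathcal{E}_\ell(s)$ an explicit finite sum, meromorphic with at worst trivial Gamma-poles.

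For the meromorphic continuation I would use the spectral expansion of $P_\ell(z,s)$ over $L^2(\Gamma_0(4)\backslash\mathbb{H})$ --- Maass cusp forms together with the Eisenstein series at the three cusps --- which continues $P_\ell(\cdot,s)$ to all of $\mathbb{C}$; in $\Re(s) \ge \tfrac12$ its only poles come from the pole of the Eisenstein series at $s = 1$ (residue a constant multiple of the constant function) and from exceptional eigenvalues, of which $\Gamma_0(4)$ has none. The pairing $\langle G, P_\ell(\cdot,\bar s)\rangle$ converges only for $\Re(s) > \tfrac32$, owing to the $y^{3/2}$-growth of $G$ at the cusps; continuing past that line --- i.e. performing Zagier's regularization by subtracting the cuspidal growth of $G$ --- produces one more simple pole at $s = \tfrac32$. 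Its residue is a Rankin--Selberg datum: a bilinear form in the constant terms of $\mathcal{H}$ at the cusps of $\Gamma_0(4)$ paired against the leading $y^{1-s}$-coefficients of the constant terms of $P_\ell(\cdot,s)$. The latter involve the Ramanujan sums $c_q(\ell)$, and via the identity $\sum_{q\ge 1} q^{-2s} c_q(\ell) = \sigma_{1-2s}(\ell)/\zeta(2s)$ this single sum contributes both the factor $\zeta(3)^{-1}$ and the divisor function $\sigma_{-2}(\ell)$ at $s = \tfrac32$; summing the three cusps with the correct $\mathcal{H}$-constant-term weights (which rescale the argument and encode the $2$-adic support of $H$) assembles the constant $\tfrac{\pi^2}{252\zeta(3)}$ and the combination $2\sigma_{-2}(\tfrac{\ell}{4}) - \sigma_{-2}(\tfrac{\ell}{2}) + \sigma_{-2}(\ell_o)$. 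As a check, this combination vanishes precisely when $\ell \equiv 2 \pmod 4$, which is forced since $H$ is supported on $n \equiv 0, 3 \pmod 4$ and so $H(n)H(n+\ell) \equiv 0$ in that case. Altogether $D_\ell(s)$ continues meromorphically to $\mathbb{C}$ with a simple pole at $s = \tfrac32$ of the stated residue, all further singularities in $\Re(s) \le 1$, and polynomial growth on vertical lines with mild, explicit dependence on $\ell$ (via, e.g., spectral large-sieve bounds for Fourier coefficients of Maass forms).

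To conclude I would feed $D_\ell(s)$ into a smoothed Perron formula to recover $\sum_{n \le X} H(n)H(n+\ell)$. Writing $w = s + \tfrac12$, so that the relevant sum is essentially $\sum_{n+\ell \le X+\ell} H(n)H(n+\ell)$, the simple pole at $w = 2$ contributes $\tfrac12 R_\ell (X+\ell)^2$ with $R_\ell := \Res_{w=2} D_\ell$; since $R_\ell \ll_\epsilon \ell^\epsilon$, absorbing the $X\ell$ and $\ell^2$ cross terms turns this into $\tfrac{\pi^2 X^2}{252\zeta(3)}\bigl(2\sigma_{-2}(\tfrac{\ell}{4}) - \sigma_{-2}(\tfrac{\ell}{2}) + \sigma_{-2}(\ell_o)\bigr) + O_\epsilon(X^{1+\epsilon}\ell)$. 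Shifting the contour to $\Re(w) = \tfrac53$, which lies strictly to the right of all remaining singularities of $D_\ell$ (those have $\Re(w) \le \tfrac32$), and using the vertical growth bounds, the residual integral is $O_\epsilon(X^{5/3+\epsilon})$; the correction terms $\mathcal{E}_\ell$ and the truncation/smoothing losses are likewise $O_\epsilon(X^{5/3+\epsilon} + X^{1+\epsilon}\ell)$. This yields the theorem.

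The step I expect to be the main obstacle is the exact evaluation of the residue $R_\ell$ --- equivalently, the regularized Rankin--Selberg computation. Since $\mathcal{H}$ is only a harmonic Maass form, its naive Petersson norm diverges and must be regularized, and the completion $\mathcal{H}^-$ enters twice --- in the cuspidal growth of $G$ that is subtracted off and in the correction terms $\mathcal{E}_\ell$ --- so keeping all contributions straight while carrying out the Euler-product / divisor-sum computation cleanly, and in particular producing the precise combination $2\sigma_{-2}(\tfrac{\ell}{4}) - \sigma_{-2}(\tfrac{\ell}{2}) + \sigma_{-2}(\ell_o)$ rather than a single $\sigma_{-2}$, is the delicate point; the behaviour of $\mathcal{H}$ at the cusps $0$ and $\tfrac12$ of $\Gamma_0(4)$ feeds directly into it. A secondary difficulty is establishing the vertical growth bounds for $D_\ell(s)$ with explicit, mild $\ell$-dependence, on which the shape $X^{5/3+\epsilon}$ of the error relies, together with making the regularization of the non-$L^2$ pairing $\langle G, P_\ell(\cdot,\bar s)\rangle$ rigorous.
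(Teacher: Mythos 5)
Your high-level strategy matches the paper's: unfold $\langle y^{3/2}|\mathcal{H}|^2, P_\ell(\cdot,\bar s)\rangle$ to realize $D_\ell(s)$, regularize so the spectral expansion of $P_\ell$ applies, read the pole at $s=\tfrac32$ from the regularization, and finish with Perron. The divisor-sum structure of the residue is also correctly attributed to the constant terms of $\mathcal{H}$ at the cusps paired against the constant-term coefficients $\varphi_{\mathfrak{a}\ell}$. Two secondary quibbles: the paper finds a second simple pole of $D_\ell(s)$ at $s=1$ (coming from $\widetilde E_\mathfrak{a}(z,1)$ in the regularization), not from the Poincar\'e pole at $s=1$ as you suggest, and you do not account for it; and your choice to shift to $\Re w = 5/3$ rather than to $\Re s = 1+\epsilon$ and $\Re s = \epsilon$ (as the paper does, with a separate treatment of the discrete-spectrum piece) leaves the $X^{5/3}$-exponent completely unjustified, because it does not balance any explicit error terms.

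The substantive gap is in the vertical-strip growth bound for $D_\ell(s)$, which you handle in a single clause ``polynomial growth on vertical lines \ldots\ via, e.g., spectral large-sieve bounds.'' This is not enough, and is also not the paper's route; the paper explicitly states that a spectral large sieve is unlikely to help here. The Perron argument with truncation parameter $T$ requires a quantitative bound $\Sigma_{\mathrm{disc}}(s) \ll \ell^{1/2-\Re s+\epsilon}|s|^{2+\epsilon}(|s|^{\Re s}+1)(|s|^{1/2}+\ell^{1/4}) e^{-\tfrac{\pi}{2}|\Im s|}$, and the whole exponent $5/3$ emerges from balancing $X^2/T$ against $XT^2$ with $T = X^{1/3}$, using precisely this bound. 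The discrete spectrum enters via $\sum_j \rho_j(\ell)\Gamma(s-\tfrac12+it_j)\Gamma(s-\tfrac12-it_j)\langle y^{3/2}|\mathcal{H}|^2,\mu_j\rangle$, and controlling this requires a pointwise triple-product bound of the form $\langle y^{3/2}|\mathcal{H}|^2,\mu_j\rangle \ll (|t_j|^{2}+|t_j|^{\epsilon}) e^{-\pi|t_j|/2}$, proved by a Jutila-style unfolding of the Rankin--Selberg transform with auxiliary Eisenstein series (Theorem~\ref{thm:intro-inner-product-bound}, Section~\ref{sec:jutila-triple-inner-products}). This is the bulk of the paper's technical effort and is entirely missing from your proposal. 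Without it, one obtains some power-saving error term but certainly not the stated $O_\epsilon(X^{5/3+\epsilon}+X^{1+\epsilon}\ell)$; indeed, you identified the residue computation as the ``main obstacle,'' but that is routine once the constant terms of $\mathcal{H}$ at cusps are known (Proposition~\ref{prop:H-cusp-behavior}) --- the genuine difficulty is the triple-product bound and its insertion into the Perron analysis.
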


For $\ell \ll X^{2/3}$, this result achieves a uniform error of size $O_\epsilon(X^{5/3+\epsilon})$. For larger $\ell$, the error term depends on $\ell$ but remains non-trivial when $\ell \ll X^{1-\epsilon}$.

Since Hurwitz class numbers agree with the ordinary class numbers $h(-n)$ for $n$ not of the form $3m^2$ or $4m^2$, the rough upper bound $H(n) \ll n^{1/2+o(1)}$ (cf.~Lemma~\ref{lem:H-growth}) implies the following result as an immediate corollary.

\begin{corollary*} With notation as above, we have
\begin{align*}
	& \sum_{n \leq X} \! h(-n-\ell)h(-n) \\[-.2em]
	& \qquad = \frac{\pi^2 X^2}{252\zeta(3)}
				\big(2\sigma_{-2}(\tfrac{\ell}{4}) - \sigma_{-2}(\tfrac{\ell}{2})+\sigma_{-2}(\ell_o) \big) 
	+ O_\epsilon\big(X^{\frac{5}{3}+\epsilon} + X^{1+\epsilon} \ell^{1+\epsilon} \big).
\end{align*}
\end{corollary*}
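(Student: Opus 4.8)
The plan is to deduce this corollary directly from Theorem~\ref{thm:intro-main-theorem} by comparing the Hurwitz class number $H(n)$ with the ordinary class number $h(-n)$ term by term and checking that the discrepancy lands entirely in the error. First I would record the relevant facts: $H(n)$ and $h(-n)$ both vanish unless $-n$ is a discriminant (that is, $n \equiv 0$ or $3 \bmod 4$); the sums range over $n \geq 1$, so the exceptional value $H(0) = -\tfrac{1}{12}$ never intervenes; and $H(n) = h(-n)$ for every $n$ outside the sparse set $\mathcal{E} = \{3m^2 : m \geq 1\} \cup \{4m^2 : m \geq 1\}$. For $n \in \mathcal{E}$ the two quantities differ only because the (at most two) classes containing a scalar multiple of $x^2+y^2$ or of $x^2-xy+y^2$ are reweighted by $\tfrac12$ or $\tfrac13$, so $|H(n) - h(-n)| \leq 1$ in every case.

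Next I would write $h(-n) = H(n) + \delta(n)$, where $\delta$ is supported on $\mathcal{E}$ and satisfies $|\delta(n)| \leq 1$, and expand the product:
\[
	\sum_{n \leq X} h(-n-\ell)\,h(-n) - \sum_{n \leq X} H(n)\,H(n+\ell)
	= \sum_{n \leq X}\bigl(\delta(n)\,H(n+\ell) + H(n)\,\delta(n+\ell) + \delta(n)\,\delta(n+\ell)\bigr).
\]
Every summand on the right vanishes unless $n \in \mathcal{E}$ or $n + \ell \in \mathcal{E}$, and the number of such $n \leq X$ is $\ll \sqrt{X}$. For each of these $n$, the bounds $|\delta| \leq 1$ and $H(m) \ll_\epsilon m^{1/2+\epsilon}$ (Lemma~\ref{lem:H-growth}) show that the corresponding summand is $\ll_\epsilon (X+\ell)^{1/2+\epsilon}$, so the right-hand side is $\ll_\epsilon \sqrt{X}\,(X+\ell)^{1/2+\epsilon} \ll_\epsilon X^{1+\epsilon} + X^{1/2}\ell^{1/2+\epsilon}$, which is comfortably absorbed into $O_\epsilon\bigl(X^{5/3+\epsilon} + X^{1+\epsilon}\ell^{1+\epsilon}\bigr)$. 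Combining this with the asymptotic of Theorem~\ref{thm:intro-main-theorem}, whose main term is unchanged, yields the corollary.

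I do not expect any real obstacle: once Theorem~\ref{thm:intro-main-theorem} is in hand, this is pure bookkeeping. The only points needing a little care are the precise description of the exceptional set $\mathcal{E}$ on which $H$ and $h$ disagree — which comes straight from the definition of the Hurwitz class number — and the observation that $H(n)$ and $h(-n)$ have the same support, so that replacing one by the other introduces no spurious main-term contribution off $\mathcal{E}$.
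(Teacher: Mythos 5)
Your proposal is correct and is precisely the paper's argument, which it leaves as a one-line remark: the paper notes that $H(n) = h(-n)$ away from $n \in \{3m^2\} \cup \{4m^2\}$ and that $H(n) \ll_\epsilon n^{1/2+\epsilon}$ (Lemma~\ref{lem:H-growth}), then declares the corollary immediate. Your write-up simply fills in the bookkeeping — the decomposition $h(-n) = H(n) + \delta(n)$, the $O(\sqrt{X})$ count of exceptional indices, and the resulting $\ll_\epsilon \sqrt{X}\,(X+\ell)^{1/2+\epsilon}$ discrepancy, which is comfortably absorbed by the stated error — and is both correct and faithful to the paper's intent.
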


The error bounds in Theorem~\ref{thm:intro-main-theorem} are of course not sharp. We conjecture that Theorem~\ref{thm:intro-main-theorem} should hold with a secondary main term and an error of size $O_\epsilon((X\ell)^{1+\epsilon})$; specifically, that
\begin{align} \label{eq:intro-conjecture}
	\sum_{n \leq X} H(n) H(n+\ell) 
	&	= \frac{\pi^2 X^2}{252\zeta(3)}
				\big(2\sigma_{-2}(\tfrac{\ell}{4}) - \sigma_{-2}(\tfrac{\ell}{2})  +\sigma_{-2}(\ell_o) \big) \\
	& \,
		- \frac{2X^{\frac{3}{2}}}{9\pi}
				\big(2 \sigma_{-1}(\tfrac{\ell}{4}) - \sigma_{-1}(\tfrac{\ell}{2}) + \sigma_{-1}(\ell_o) \big)
		+ O_\epsilon\big((X\ell)^{1+\epsilon} \big).
\end{align}
To support this conjecture, we show (cf.~Remark~\ref{rem:smooth-result}) that~\eqref{eq:intro-conjecture} holds when the cutoff $n \leq X$ is replaced by a certain class of truncations with smoothing.

\section*{Paper Methodology and Outline}

To produce shifted convolution estimates that treat all congruence classes equally, we abandon~\eqref{eq:r3-h-identity} in favor of the generating function $\mathcal{H}(z)$ from~\eqref{eq:H-definition}. In particular, we treat shifted convolutions involving weak harmonic Maass forms instead of ordinary modular forms. We also depart from~\cite{Kumaraswamy18} in that we treat shifted convolutions using the spectral theory of automorphic forms, as opposed to the circle method.

Following some background material on harmonic weak Maass forms and mock modular forms in section~\S{\ref{sec:harmonic-Maass-background}}, we relate the Dirichlet series $D_\ell(s)$ defined in~\eqref{eq:D_h-definition} to the Petersson inner product $\langle y^{3/2} \vert \mathcal{H} \vert^2, P_\ell(\cdot, \overline{s}) \rangle$, in which $P_\ell(z,s)$ is a particular Poincar\'e series.

We obtain a meromorphic continuation for $D_\ell(s)$ by first producing a meromorphic continuation of $\langle y^{3/2} \vert \mathcal{H} \vert^2, P_\ell(\cdot, \overline{s}) \rangle$. This task is complicated by the fact that $F(z):=y^{3/2} \vert \mathcal{H}(z) \vert^2$ is not square-integrable. To address this, we show in section~\S{\ref{sec:automorphic-regularization}} that $F(z)$ may be written in the form $\mathcal{V}(z) + \mathcal{E}(z)$, in which $\mathcal{V} \in L^2$ and $\mathcal{E}$ is an explicit function involving Eisenstein series and the Jacobi theta function.

The meromorphic continuations of $\langle \mathcal{E}, P_\ell(\cdot,\overline{s}) \rangle$ and $\langle \mathcal{V}, P_\ell(\cdot,\overline{s}) \rangle$  are then computed in sections~\S{\ref{sec:regularization-inner-products}} and~\S{\ref{sec:spectral-expansion}}, respectively. While $\langle \mathcal{E}, P_\ell(\cdot,\overline{s}) \rangle$ can be understood directly, the meromorphic continuation of $\langle \mathcal{V}, P_\ell(\cdot,\overline{s}) \rangle$ is accomplished through spectral expansion of the Poincar\'e series.

The methods described up to this point apply more generally. To illustrate this, the major results of sections~\S{\ref{sec:unfolding}-\ref{sec:spectral-expansion}} are presented with $\mathcal{H}$ replaced by a generic weak harmonic Maass form ``of polynomial growth'' (cf.~\S{\ref{sec:harmonic-Maass-polynomial-growth}}). Our first significant specialization to $\mathcal{H}(z)$ occurs in~\S{\ref{sec:spectral-expansion}}, where we leverage the fact that the contribution from the non-holomorphic part of $\mathcal{H}(z)$ is unusually simple (cf. Remark~\ref{rem:H-specialization}) to more easily classify the poles and residues of $D_\ell(s)$ in the right half-plane $\Re s > \frac{1}{2}$.

Our main application, Theorem~\ref{thm:intro-main-theorem}, also requires uniform bounds for the growth of $D_\ell(s)$ in vertical strips. In section~\S{\ref{sec:D_h-growth-bounds}}, we address various elementary terms to reduce this problem to growth estimates for $\langle \mathcal{V}, P_\ell(\cdot, \overline{s}) \rangle$.

The spectral expansion of $P_\ell(z,s)$ gives a decomposition $\langle \mathcal{V}, P_\ell(\cdot, \overline{s}) \rangle = \Sigma_{\mathrm{disc}}(s) + \Sigma_{\mathrm{cont}}(s)$ corresponding to contributions from the discrete and continuous spectra of the hyperbolic Laplacian. While $\Sigma_{\mathrm{cont}}$ is readily handled, the problem of bounding $\Sigma_{\mathrm{disc}}(s)$ with respect to $\vert \Im s \vert$ is particularly complicated and represents the central difficulty of this work.

Ultimately, our bounds for $\Sigma_{\mathrm{disc}}$ rely on decay estimates for triple inner products of the form $\langle y^{3/2} \vert \mathcal{H} \vert^2, \mu_j \rangle$, in which $\mu_j(z)$ runs through an orthonormal basis for Hecke--Maass cusp forms on $\Gamma_0(4)$. Similar inner products, of the form $\langle y^k \phi_1 \overline{\phi_2}, \mu_j  \rangle$ (with $\phi_1,\phi_2$ automorphic forms of weight $k$) have been studied in numerous works, and we mention a few:
\begin{enumerate}
	\item[a.] $\phi_1, \phi_2$ weight $k \in \mathbb{Z}$ holomorphic cusp forms on $\Gamma_0(N)$, by~\cite{Good81};
	\item[b.] $\phi_1, \phi_2$ weight $0$ Eisenstein series on $\Gamma_0(1)$, by~\cite{VinogradovTakhtadzhyan87};
	\item[c.] $\phi_1\overline{\phi_2}$ replaced by any polynomial in Maass cusp forms, by~\cite{Sarnak94};
	\item[d.] $\phi_1, \phi_2$ weight $0$ Maass cusp forms on $\Gamma_0(1)$, by~\cite{Jutila96,Jutila97};
	\item[e.] $\phi_1,\phi_2$ weight $k \in \frac{1}{2} \mathbb{Z}$ modular forms on $\Gamma_0(N)$, by~\cite{Kiral15}.
\end{enumerate}
Of these prior works, (a) and (b) use the Rankin--Selberg method directly, (c) and (e) use the automorphic kernel, and (d) uses a modified Rankin--Selberg method that introduces an auxiliary Eisenstein series for the express purpose of unfolding.

Our treatment of $\langle y^{3/2} \vert \mathcal{H} \vert^2, \mu_j \rangle$ appears in section~\S{\ref{sec:jutila-triple-inner-products}}. More generally, this section produces bounds for triple inner products of the form $\langle y^{k} \vert f \vert^2, \mu_j \rangle$, where $f$ is a harmonic Maass form of polynomial growth of weight $k \in \frac{1}{2} + \mathbb{Z}$.
In particular, we prove the following result:

\begin{theorem} \label{thm:intro-inner-product-bound}
Let $f$ be a harmonic Maass form of polynomial growth of weight $k \in \frac{1}{2} + \mathbb{Z}$ and level $N$. Let $\mu$ be an $L^2$-normalized Hecke--Maass cusp form of weight $0$ on $\Gamma_0(N)$, with spectral type $t \in \mathbb{R}$. For all $\epsilon > 0$, we have
\[
	\langle y^k \vert f \vert^2, \mu \rangle 
		\ll_{N,\epsilon}
		\big( \vert t \vert^{2k-1} + \vert t \vert^{3-2k} \big) \vert t \vert^\epsilon
		e^{-\frac{\pi}{2} \vert t \vert}.
\]
\end{theorem}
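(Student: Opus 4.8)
The plan is to bound the triple inner product by unfolding against an auxiliary Eisenstein series, following the modified Rankin–Selberg method of Jutila (item (d) in the list above), and then to extract the exponential decay in $|t|$ from the resulting archimedean integral transform. Write $F(z) = y^k |f(z)|^2$, which is $\Gamma_0(N)$-invariant since $f$ has weight $k$. Because $f$ is a harmonic Maass form of polynomial growth, $F(z)$ is not of rapid decay at the cusps, so one cannot unfold $\langle F, \mu\rangle$ directly against a Poincaré series; instead I would introduce the real-analytic Eisenstein series $E(z,w)$ (attached to the cusp $\infty$, say, with the other cusps handled identically) and consider the regularized integral
\[
	\int_{\Gamma_0(N)\backslash\mathbb{H}} F(z)\,\mu(z)\,E(z,w)\,\frac{dx\,dy}{y^2},
\]
which converges for $\Re w$ large. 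Unfolding the Eisenstein series turns this into an integral over the strip $\{0 \le x \le 1,\ y > 0\}$ against $F(z)\mu(z) y^{w}$; expanding $\mu$ into its Fourier–Whittaker expansion and $F$ into the Fourier expansion coming from $|f|^2$ (a Rankin–Selberg type convolution of the Fourier coefficients of the holomorphic and non-holomorphic parts of $f$), the $x$-integral picks out a diagonal, and one is left with a Dirichlet series in the Fourier coefficients times an archimedean integral in $y$. Meromorphically continuing in $w$ and taking a residue (or a suitable special value) at $w$ corresponding to the constant Eisenstein series recovers $\langle F,\mu\rangle$ up to explicitly controlled factors involving $\zeta$-values and the Rankin–Selberg $L$-function of $\mu$.

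The exponential decay $e^{-\frac{\pi}{2}|t|}$ comes entirely from the archimedean side. After the unfolding, the $y$-integral is a product of two Whittaker functions (the $W$-Whittaker function $W_{0,it}$ from $\mu$, and the combination of $e^{-2\pi n y}$-type factors and incomplete-gamma factors from $|f|^2$, the latter being the hallmark of the harmonic Maass form) integrated against $y^{w + k - 1}\,dy/y$. The key input is the classical asymptotic/bound for $\int_0^\infty W_{0,it}(4\pi y)\, g(y)\, y^{s}\,\frac{dy}{y}$ when $g$ has the relevant Mellin-analytic behaviour: because $W_{0,it}(y)$ decays like $e^{-y/2}$ but its Mellin transform $\int_0^\infty W_{0,it}(y) y^{s-1}dy$ involves $\Gamma(\tfrac{s}{2}+it)\Gamma(\tfrac{s}{2}-it)$, shifting contours and applying Stirling produces the factor $e^{-\frac{\pi}{2}|t|}$ together with a polynomial factor in $|t|$. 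The two exponents $|t|^{2k-1}$ and $|t|^{3-2k}$ arise as the competing polynomial contributions: one from the region where the spectral parameter dominates the other archimedean parameters, one from the complementary regime — with the precise powers dictated by the weight $k$ entering through the $y^{k}$ weighting and through the shape of the incomplete-gamma part of $f$. An $|t|^\epsilon$ is lost in the subconvexity-free bound for the finite Dirichlet series / Rankin–Selberg $L$-value at the relevant point, using only the Cauchy–Schwarz or mean-value bound for Hecke eigenvalues.

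In carrying this out I would (i) first set up the regularized Rankin–Selberg integral and justify its convergence and meromorphic continuation in the auxiliary variable $w$, carefully isolating the contributions of each cusp and of the non-square-integrable part of $F$ (here the structure $F = \mathcal{V} + \mathcal{E}$ from \S\ref{sec:automorphic-regularization}, or its abstract analogue for general $f$ of polynomial growth, will let me discard the Eisenstein piece $\mathcal{E}$, which is orthogonal to the cusp form $\mu$, and work with $\mathcal{V}\in L^2$); (ii) perform the Fourier expansion and the $x$-integration to reduce to an arithmetic Dirichlet series times an archimedean integral; (iii) evaluate or bound the arithmetic factor trivially (polynomial in $|t|^\epsilon$); and (iv) do the archimedean analysis — the Mellin–Barnes manipulation of the product of Whittaker and incomplete-gamma functions — to pull out $e^{-\frac{\pi}{2}|t|}(|t|^{2k-1}+|t|^{3-2k})$. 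The main obstacle is step (iv): controlling the archimedean integral uniformly in $t$ when $f$ has polynomial (rather than moderate) growth, because the incomplete-gamma terms in the Fourier expansion of $f$ contribute Mellin factors with poles that must be navigated when shifting contours, and one must check that the contour shifts do not cross these poles in a way that destroys the exponential saving. A secondary technical point is bookkeeping the half-integral weight: the multiplier system and the precise Whittaker function indices must be tracked so that the Stirling estimate produces exactly $e^{-\frac{\pi}{2}|t|}$ and not a weaker constant in the exponent.
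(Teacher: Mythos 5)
Your overall architecture is right — unfold against an auxiliary Eisenstein series, expand into Fourier coefficients, and do Mellin–Barnes analysis of the archimedean integrals to extract $e^{-\frac{\pi}{2}|t|}$ via Stirling, with the two powers $|t|^{2k-1}$ and $|t|^{3-2k}$ arising from complementary regimes depending on $k \gtrless 1$. You also correctly note that the Eisenstein/theta correction $\mathcal{E}$ may be discarded since it is orthogonal to cusp forms, so it suffices to work with an $L^2$ object. But there is a genuine gap in the mechanism you describe for extracting $\langle F,\mu\rangle$ from the unfolded integral.

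You propose to ``meromorphically continue in $w$ and take a residue (or a suitable special value) at $w$ corresponding to the constant Eisenstein series.'' The problem is that the unfolded integral $R(\phi, w) = \int_0^1\int_0^\infty \phi(z)\, y^{w}\, \frac{dx\,dy}{y^2}$, with $\phi = y^k|f|^2\overline{\mu}$, has a Fourier expansion that produces a \emph{triple}-sum Dirichlet series over $n_1 - n_2 = n_3$ with no multiplicative/product structure and no functional equation of its own. This series (and its archimedean factors) converges absolutely only for $\Re w$ large, and there is no readily available continuation of it to the pole at $w=1$ where you would take the residue. (Your suggestion that the arithmetic factor is ``the Rankin--Selberg $L$-function of $\mu$'' is not accurate: the presence of $|f|^2$ turns this into a shifted-convolution-type object whose continuation is exactly what one cannot do directly.) The crucial idea of Jutila's method — which the paper imports as \eqref{eq:Rankin-Selberg-two-lines} and \eqref{eq:inner-product-vs-Rankin-Selberg-transform} — is precisely to \emph{avoid} ever evaluating $R$ near $w=1$. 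One writes the completed transform $R_0^*(\phi,1)$ as a contour integral
\[
R_0^*(\phi,1) = \frac{1}{2\pi i}\int_{\mathcal{O}} g(s)\frac{R_0^*(\phi,s)}{s-1}\,ds,
\]
with $g$ rapidly decaying and $g(1)=1$, bends $\mathcal{O}$ into two far-away vertical lines $\Re s = a$ and $\Re s = 1-a$, and then applies the functional equation of the Eisenstein series to relate $R^*(\phi,1-s)$ to a linear combination $\sum_{\mathfrak{a}}\gamma_{\mathfrak{a}}(s)R^*(\phi_{\mathfrak{a}},s)$. After this, $\langle F,\mu\rangle$ is expressed purely in terms of $R(\phi_{j\mathfrak{a}},s)$ on a single line $\Re s = a \gg 1$ inside the region of absolute convergence, which is exactly where your archimedean Mellin–Barnes analysis and Cauchy–Schwarz on Fourier coefficients can be carried out. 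Without this step, the plan to ``continue and take a residue'' has no concrete mechanism and the proof does not close.

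A secondary point: the archimedean analysis is harder than your sketch indicates. After unfolding, the paper splits the triple sum into pieces $I^{+}, I^{-}, I^{\times}, I^{0}$ by the signs of the indices, and each piece produces a \emph{different} archimedean transform ($\varphi_j^{+}, \varphi_j^{-}, \varphi_j^{\times}$, involving one $K$-Bessel function together with zero, one, or two incomplete gamma factors). Extracting $e^{-\frac{\pi}{2}|t|}$ uniformly requires hypergeometric representations and contour shifts specific to each, plus a truncation argument (to $n(n+m)\ll |t|^{2+\delta}$ or $m\ll |t|^{2+\delta}$) to control the sums. You are right that this is the main obstacle, but the decomposition and the separate treatment of each $\varphi_j^{\bullet}$ are structural necessities, not just bookkeeping.
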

We remark that the space of harmonic Maass forms of polynomial growth includes $M_k(\Gamma_0(N))$, the space of modular forms. In this setting, Theorem~\ref{thm:intro-inner-product-bound} can be used to improve the spectral dependence in certain results of~\cite{Kiral15}. (In particular, see~\cite[Proposition 14]{Kiral15}.)

Our proof of Theorem~\ref{thm:intro-inner-product-bound} draws heavy inspiration from~\cite{Jutila96,Jutila97}, though our work is more complicated in several respects, such as the change from $\Gamma_0(1)$ to $\Gamma_0(N)$, the change in Whittaker functions (from $K$-Bessel functions to incomplete gamma functions), the generalization to half-integral weight, and the introduction of terms related to the fact that $f$ need not be cuspidal.
We also depart from Jutila by considering individual inner products instead of spectral large sieve inequalities. We suspect that a spectral large sieve inequality would not improve Theorem~\ref{thm:intro-main-theorem}.

In section~\S{\ref{sec:D_h-growth-II}}, we apply these triple product estimates to complete our quantification of the growth of $D_\ell(s)$. At this point, our main result follows from a version of Perron's formula with truncation, as presented in~\S{\ref{sec:perron}}.

\section*{Acknowledgments}

The author was supported by the Additional Funding Programme for Mathematical Sciences, delivered by EPSRC (EP/V521917/1) and the Heilbronn Institute for Mathematical Research. This work was also supported by the Swedish Research Council under grant no.\ 2016-06596 while the author was in residence at Institut Mittag-Leffler in Djursholm, Sweden.


\section{Harmonic Weak Maass Forms and Mock Modular Forms} \label{sec:harmonic-Maass-background}

The theory of harmonic Maass forms was introduced by Bruinier and Funke in the context of geometric theta lifts~\cite{BruinierFunke04}. This section reviews the basic definitions of harmonic Maass forms and mock modular forms. A good reference for background material is~\cite[\S{4}]{BFOR19}.

A weak Maass form of weight $k$ on a congruence subgroup $\Gamma \subset \mathrm{SL}_2(\mathbb{Z})$ is a smooth function $f: \mathfrak{h} \to \mathbb{C}$ which transforms like a modular form of weight $k$, is an eigenfunction of the weight $k$ Laplacian
\[
	\Delta_k := - y^2 \Big( \frac{\partial^2}{\partial x^2} + \frac{\partial^2}{\partial y^2}\Big) 
		+ i k y \Big( \frac{\partial}{\partial x} + i \frac{\partial}{\partial y} \Big),
\]
and has at most linear exponential growth at cusps. 

If $\Delta_k f = 0$, then $f$ is called a harmonic (weak) Maass form of manageable growth. Let $H_k^!(\Gamma)$ denote the space of weight $k$ harmonic Maass forms of manageable growth on $\Gamma$. 
If $\Gamma = \Gamma_0(N)$ or $\Gamma_1(N)$, then any $f(x+iy) \in H_k^!(\Gamma)$ admits a Fourier expansion at $\infty$ of the form
\begin{align} \label{eq:harmonic-Fourier-expansion}
f(z) = \sum_{n \geq n^+} &  c^+(n) e(nz) \\[-.5em]
& + c^-(0)y^{1-k} + \sum_{\substack{n \geq n^- \\ n \neq 0 }} c^-(n) \Gamma(1-k,4\pi n y) e(-nz)
\end{align}
(cf.~\cite[Lemma 4.3]{BFOR19}), where $\Gamma(\beta,y)$ denotes the incomplete gamma function
\[\Gamma(\beta,y) := \int_y^\infty t^{\beta} e^{-t} \frac{dt}{t}.\]
In the case $k=1$, the term $c^-(0)y^{1-k}$ is replaced with $c^-(0)\log y$. The two lines~\eqref{eq:harmonic-Fourier-expansion} in the Fourier expansion of $f(z)$ are called the holomorphic part and the non-holomorphic part, respectively. Any function which arises as the holomorphic part of a harmonic Maass form of manageable growth is called a mock modular form.

Fourier expansions of analogous shape exist for each cusp of $\Gamma$. To describe this precisely, we assume henceforth that $k \in \frac{1}{2}\mathbb{Z}$ and $\Gamma \subset \Gamma_0(4)$ and restrict to 
Maass forms with the theta multiplier system $\upsilon_\theta$. That is, where $\theta(z) := \sum_{n \in \mathbb{Z}} e(n^2 z)$ denotes the Jacobi theta function, we assume that
\begin{align*}
	f(\gamma z) &= \Big(\frac{\theta(\gamma z)}{\theta(z)}\Big)^{2k} f(z) 
\end{align*}
for all $\gamma = (\begin{smallmatrix} a & b \\ c& d \end{smallmatrix}) \in \Gamma$. This may also be written $f(\gamma z) = \upsilon_\theta(\gamma)^{2k} (cz+d)^k f(z)$, in which $\upsilon_\theta$ is defined by $\upsilon_\theta((\begin{smallmatrix} a & b \\ c & d \end{smallmatrix})) := \epsilon_d^{-1}(\frac{c}{d})$, where $\epsilon_d =1$ for $d \equiv 1 \bmod 4$ and $\epsilon_d = i$ for $d \equiv 3 \bmod 4$. For $\gamma  = (\begin{smallmatrix} a & b \\ c& d \end{smallmatrix}) \in \mathrm{GL}(2,\mathbb{R})$ with $\det \gamma > 0$, we define the weight $k$ slash operator by
\[
	f\vert_{\gamma}(z) := \Big(\frac{\theta(\gamma z)}{\theta(z)}\Big)^{-2k} f(\gamma z).
\]

Finally, for each cusp $\mathfrak{a}$ of $\Gamma$, let $\Gamma_\mathfrak{a} = \langle \pm t_\mathfrak{a} \rangle \subset \Gamma$ denote the stabilizer of $\mathfrak{a}$.  Let $\sigma_{\mathfrak{a}}$ denote a scaling matrix for $\mathfrak{a}$, i.e.\ a matrix in $\mathrm{GL}(2,\mathbb{R})$ for which $t_\mathfrak{a} = \sigma_\mathfrak{a} (\begin{smallmatrix} 1 & 1 \\ 0 & 1 \end{smallmatrix}) \sigma_{\mathfrak{a}}^{-1}$. Define the cusp parameter $\varkappa_\mathfrak{a} \in [0,1)$ so that $e(\varkappa_\mathfrak{a}) = \upsilon_\theta(t_\mathfrak{a})$. If $\varkappa_\mathfrak{a} = 0$, then the cusp $\mathfrak{a}$ is called singular; otherwise, $\mathfrak{a}$ is called non-singular.

Given all this notation, $f(z)$ admits a Fourier expansion at each cusp $\mathfrak{a}$ of $\Gamma$, given by
\begin{align} \label{eq:harmonic-Fourier-expansion-general}
	& f_\mathfrak{a}(z) := f\vert_{\sigma_{\mathfrak{a}}}(z) 
		= \sum_{n \geq n^+}   c_\mathfrak{a}^+(n) e((n+\varkappa_\mathfrak{a})z) \\
	& \qquad \qquad + c_\mathfrak{a}^-(0)y^{1-k} 
		+ \sum_{\substack{n \geq n^- \\ n \neq \varkappa_\mathfrak{a} }}
		c_\mathfrak{a}^-(n) \Gamma(1-k,4\pi (n - \varkappa_\mathfrak{a}) y) e(-(n-\varkappa_\mathfrak{a})z),
\end{align}
where $c_\mathfrak{a}^-(0) y^{1-k}$ appears only when $\varkappa_\mathfrak{a} = 0$. When $k=1$ and $\varkappa_\mathfrak{a}=0$, we replace this term by $c_\mathfrak{a}^-(0) \log y$. Since we work most commonly with the Fourier expansion at $\mathfrak{a} = \infty$, we retain the shorthand $c^\pm(n) := c_\infty^\pm(n)$.

\subsection{The Shadow Operator \texorpdfstring{$\xi_k$}{xi}}

This section follows~\cite[\S{5.1}]{BFOR19}. Recall the Maass lowering operator $L_k$ defined by $L_k = -iy^2(\frac{\partial}{\partial x} + i \frac{\partial}{\partial y})$. We define as well the shadow operator $\xi_k = y^{k -2} \overline{L_k}$. By~\cite[Theorem 5.10]{BFOR19}, $\xi_{k}$ maps $H_{k}^!(\Gamma_0(N))$ surjectively to $M_{2-k}^!(\Gamma_0(N))$, the space of weakly holomorphic modular forms of weight $2-k$. This map is given by
\begin{align} \label{eq:shadow-definition}
	\xi_{k}(f(z)) = (1-k) \overline{c^-(0)} 
	- (4\pi)^{1-k} \sum_{\substack{n \geq n^- \\ n \neq 0}} \overline{c^-(n)} n^{1-k} e(nz).
\end{align}
The form $\xi_{k} f$ is called the shadow of $f$.

\subsection{Harmonic Maass Forms of Polynomial Growth} \label{sec:harmonic-Maass-polynomial-growth}

Generically, the coefficient series $\{c_\mathfrak{a}^\pm(n)\}$ grow super-polynomially as $n \to \infty$. In the remainder of this article, we restrict to the special case in which the coefficients are polynomially bounded in $n$. This is equivalent to the property that $f(z)$ have no poles at cusps, or that $n^\pm \pm \varkappa_\mathfrak{a} \geq 0$ in~\eqref{eq:harmonic-Fourier-expansion} for all $\mathfrak{a}$.

Let $H^\sharp_k(\Gamma_0(N))$ denote the subspace of $H_k^!(\Gamma_0(N))$ consisting of forms with at most polynomial growth at cusps. We remark that the space $H_k^\sharp$ features prominently in~\cite{ShankhadharSingh22}, where it serves as a natural setting to study $L$-functions attached to mock modular forms. Note that $H_k^\sharp$ is a subspace of the space of (not-necessarily-cuspidal) Maass wave forms of weight $k$.

The shadow operator maps $\xi_k : H^\sharp(\Gamma_0(N)) \to M_{2-k}(\Gamma_0(N))$. In particular, $\xi_k$ annihilates $H_k^\sharp(\Gamma_0(N))$ for $k> 2$. In other words, $H_k^\sharp = M_k$ for $k> 2$, so the space $H_k^\sharp$ is most interesting for $k \leq 2$.
 
Though exact growth rates for the coefficients $c_\mathfrak{a}^\pm(n)$ are not known, adequate on-average bounds are known from the Rankin--Selberg method as applied to Maass forms (including non-cuspidal Maass forms) in~\cite{Muller95}. Specializing to the case of harmonic Maass forms and translating notation, we present the following result.

\begin{lemma}[cf.~Theorem 5.2,~\cite{Muller95}] \label{lem:Muller-coefficient-averages}
Fix $f(z) \in H_k^\sharp(\Gamma_0(N))$ with $k \in \frac{1}{2} \mathbb{Z}$ and $k \neq 1$. If $f$ has Fourier expansion~\eqref{eq:harmonic-Fourier-expansion-general}, then
\[\,\,\, 
	\sum_{n \leq X} \frac{\vert c_\mathfrak{a}^{\pm}(n) \vert^2}{(n \pm \varkappa_\mathfrak{a})^{k-1}}
		= \begin{cases}
			c_{f,\mathfrak{a}}^\pm X + O_f\big(X^{\frac{3}{5}} \log X\big),  
				\qquad \qquad \qquad \quad f \text{ cuspidal}, \\
			c_{f,\mathfrak{a}}^\pm X^{1+ \vert k -1 \vert}
		+ O_f\big( X^{1 + \vert k-1 \vert - \frac{2 + 4 \vert k-1 \vert}{5+ 8 \vert k -1 \vert}} \log X\big),
			\quad \, \text{else},
			\end{cases}
\]
for some constants $c_{f,\mathfrak{a}}^\pm$.
\end{lemma}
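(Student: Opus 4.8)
\emph{Proof proposal.} The plan is to obtain the lemma as the specialization to $H_k^\sharp(\Gamma_0(N))$ of the regularized Rankin--Selberg analysis of~\cite{Muller95}. The preliminary point is that any $f \in H_k^\sharp(\Gamma_0(N))$ with $\Delta_k f = 0$, transforming with weight $k$ and the theta multiplier and with polynomially bounded Fourier coefficients at every cusp, produces a $\Gamma_0(N)$-invariant function $\Phi(z) := y^k \vert f(z)\vert^2$ of polynomial growth: at each cusp $\mathfrak{a}$ the non-decaying part of $\vert f_\mathfrak{a}\vert^2$ is the finite sum $\vert c_\mathfrak{a}^+(0)\vert^2 + 2\Re\big(c_\mathfrak{a}^+(0)\overline{c_\mathfrak{a}^-(0)}\big)y^{1-k} + \vert c_\mathfrak{a}^-(0)\vert^2 y^{2-2k}$, so $\Phi$ grows like $y^{1+\vert k-1\vert}$ at worst. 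This is exactly the situation to which the regularized Rankin--Selberg method of Zagier and M\"uller applies; the exclusion $k \neq 1$ is needed because for $k=1$ the term $c_\mathfrak{a}^-(0)\log y$ spoils the polynomial shape of $\Phi$. The two cases of the lemma correspond to whether $\Phi$ decays at every cusp ($f$ cuspidal) or grows at some cusp ($f$ non-cuspidal).

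Fix a cusp $\mathfrak{a}$, let $E_\mathfrak{a}(z,s)$ be the weight-zero Eisenstein series at $\mathfrak{a}$, and form the regularized inner product $R_\mathfrak{a}(s) := \langle \Phi, E_\mathfrak{a}(\cdot,\overline{s})\rangle^{\mathrm{reg}}$, the regularization being forced by the growth of $\Phi$. Zagier's identity expresses $R_\mathfrak{a}(s)$, for $\Re s$ large, as $\Gamma(k+s-1)(4\pi)^{1-k-s}\,D_\mathfrak{a}^+(s) + G_k(s)\,D_\mathfrak{a}^-(s)$ plus elementary terms and terms built from the scattering entries $\varphi_{\mathfrak{a}\mathfrak{b}}(s)$ and the leading cusp-coefficients of $\Phi$, where $D_\mathfrak{a}^\pm(s) := \sum_n \vert c_\mathfrak{a}^\pm(n)\vert^2 (n\pm\varkappa_\mathfrak{a})^{-(k-1)-s}$ is the generating series for the sums in the lemma and $G_k(s)$ is an explicit ratio of Gamma functions (the Mellin transform of $y \mapsto \Gamma(1-k,4\pi y)^2 e^{4\pi y}$), of rapid vertical decay. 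The cross terms between the holomorphic and non-holomorphic parts of $f_\mathfrak{a}$, and between the constant term and the oscillatory modes, vanish upon integrating in $x$, so $D_\mathfrak{a}^+$ and $D_\mathfrak{a}^-$ are isolated cleanly.

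Now $R_\mathfrak{a}(s)$ continues meromorphically to $\mathbb{C}$ along with $E_\mathfrak{a}(z,s)$, and in $\Re s \geq 1$ its poles are: a simple pole at $s=1$, with residue proportional to $\mathrm{vol}(\Gamma_0(N)\backslash\mathfrak{h})^{-1}$ times the regularized integral of $\Phi$; and, when $f$ is non-cuspidal, a further simple pole at $s = 1 + \vert k-1\vert$, produced by pairing the leading term $C_\mathfrak{b}\,y_\mathfrak{b}^{1+\vert k-1\vert}$ of $\Phi$ at each cusp $\mathfrak{b}$ against the $\varphi_{\mathfrak{a}\mathfrak{b}}(s)\,y_\mathfrak{b}^{1-s}$ summand in the constant term of $E_\mathfrak{a}$, with residue $\sum_\mathfrak{b} C_\mathfrak{b}\,\varphi_{\mathfrak{a}\mathfrak{b}}(1+\vert k-1\vert)$. (When $f$ is cuspidal these scattering contributions vanish and the rightmost pole is at $s=1$; the poles the Gamma and $G_k$ factors contribute, together with the pole at $s=1$ in the non-cuspidal case, all lie strictly to the left of $\Re s = 1 + \vert k-1\vert - \theta$ for the exponents $\theta$ below, so they feed only the error term.) Comparing the rightmost pole of $R_\mathfrak{a}$ against the holomorphic, respectively non-holomorphic, piece shows that $D_\mathfrak{a}^\pm(s)$ is holomorphic past that point and has a simple pole there, at $s=1$ in the cuspidal case and $s = 1 + \vert k-1\vert$ otherwise, with some constant residue.

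Finally, bound $E_\mathfrak{a}(z,s)$ polynomially in $\vert\Im s\vert$ on vertical lines and use the exponential decay of the Gamma and $G_k$ factors to bound $D_\mathfrak{a}^\pm(s)$ polynomially in $\vert\Im s\vert$ just to the left of the dominant pole; then apply Perron's formula with truncation, shift the contour past that pole, and optimize the truncation height against the contour abscissa. The residue supplies the main term $c_{f,\mathfrak{a}}^\pm X$ or $c_{f,\mathfrak{a}}^\pm X^{1+\vert k-1\vert}$ (the constant absorbing the factor $1/(1+\vert k-1\vert)$ from partial summation), and the classical balancing produces the error exponents $\tfrac35$ in the cuspidal case and $1+\vert k-1\vert - \tfrac{2+4\vert k-1\vert}{5+8\vert k-1\vert}$ in the non-cuspidal case, with the $\log X$ coming from the Perron truncation and subsidiary poles. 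I expect the main obstacle to be the regularization and the attendant pole bookkeeping when $f$ is non-cuspidal: one must make precise Zagier's identity and the role of the full scattering matrix of $\Gamma_0(N)$ with the theta multiplier, and check that the half-integral weight and the incomplete-Gamma Whittaker functions introduce no spurious poles in $\Re s \geq 1$ and do not worsen the vertical-growth bounds. In the non-cuspidal case this growth is governed by Eisenstein rather than cusp-form contributions, which is precisely what forces the weaker error exponent there.
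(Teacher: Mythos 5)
The paper does not give a proof of this lemma: it is presented as a citation and notational translation of~\cite{Muller95}, Theorem~5.2, so there is no argument in the paper to compare yours against. Your sketch correctly identifies the method behind M\"uller's result — regularized Rankin--Selberg unfolding of $y^k\vert f\vert^2$ against $E_\mathfrak{a}(\cdot,s)$, meromorphic continuation via the Eisenstein functional equation, and a truncated Perron formula — and the pole bookkeeping (a regularization pole at $s = 1+\vert k-1\vert$ in the non-cuspidal case, to the right of the $s=1$ pole of $E_\mathfrak{a}$ and therefore dominating) is the right picture.

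There is, however, a step you pass off as automatic that is not. You write that the cross terms vanish after $x$-integration ``so $D_\mathfrak{a}^+$ and $D_\mathfrak{a}^-$ are isolated cleanly.'' The cross terms do vanish, but the unfolded transform then contains $D_\mathfrak{a}^+$ and $D_\mathfrak{a}^-$ only in the single combination $\Gamma(s+k-1)(4\pi)^{1-k-s}D_\mathfrak{a}^+(s) + G_k(s)D_\mathfrak{a}^-(s)$; continuing $R_\mathfrak{a}(s)$ and locating its poles therefore yields information about this combination, not about each series individually, while the lemma asserts separate asymptotics for $\sum\vert c_\mathfrak{a}^+(n)\vert^2/(n+\varkappa_\mathfrak{a})^{k-1}$ and for $\sum\vert c_\mathfrak{a}^-(n)\vert^2/(n-\varkappa_\mathfrak{a})^{k-1}$. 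Separating them requires a further argument — positivity of coefficients and a Landau-type result to pin each abscissa, or exploiting the distinct Mellin kernels $\Gamma(s+k-1)$ and $G_k(s)$, or simply using that M\"uller's statement is formulated per mode. You also invoke ``classical balancing'' to produce the exponents $3/5$ and $1+\vert k-1\vert - \tfrac{2+4\vert k-1\vert}{5+8\vert k-1\vert}$ without establishing the vertical-strip convexity bound for $R_\mathfrak{a}(s)$ (in particular the decay of the incomplete-Gamma kernel $G_k(s)$ on vertical lines) that the Perron optimization actually rests on. Neither gap is fatal — both points are handled in M\"uller — but as written the sketch does not establish the separate asymptotics or the precise error exponent.
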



\section{Shifted Convolutions via Inner Products}  \label{sec:unfolding}

In this section, we show that shifted convolution Dirichlet series of the form~\eqref{eq:D_h-definition} can be recognized in terms of Petersson inner products. To begin, we treat a generic form $f(z) \in H_k^\sharp(\Gamma_0(N))$ with Fourier expansion~\eqref{eq:harmonic-Fourier-expansion}. We define the $\ell$-th Poincar\'e series $P_\ell(z,s)$ of weight $0$ on $\Gamma_0(N)$ by
\[
	P_\ell(z,s) := \sum_{\gamma \in \Gamma_\infty \backslash \Gamma_0(N)} \Im(\gamma z)^s e(\ell \gamma z).
\]
For $s$ with $\Re s$ sufficiently large, the Rankin--Selberg unfolding method gives
\begin{align} \label{eq:Poincare-unfolding}
\langle y^{k} \vert f \vert^2, P_\ell(\cdot, \overline{s}) \rangle
	&= \int_0^\infty \int_0^1 
		y^{s+k} \vert f(z) \vert^2 \overline{e(\ell z)} \frac{dxdy}{y^2} \\
	&= \sum_{n_1 = n_2 +\ell} \int_0^\infty y^{s+k-1} c(n_1,y) \overline{c(n_2,y)} e^{-2\pi h y} \frac{dy}{y},
\end{align}
in which $c(n,y)$ denotes the $n$-th Fourier coefficient of $f(z)$ at the cusp $\mathfrak{a} = \infty$. In other words, $c(n,y) = c^+(n) e^{-2\pi n y}$ for $n \geq 1$, $c(0,y) = c^+(0) + c^-(0) y^{1-k}$, and $c(n,y) = c^-(-n) \Gamma(1-k,-4\pi n y) e^{-2\pi n y}$ for $n \leq -1$.

The contribution of $n_1, n_2 > 0$ to the inner product is a standard shifted convolution Dirichlet series:
\begin{align*}
I_\ell^+(s) &:=	\!\! \sum_{n_1=n_2+\ell} \!\!
	c^+(n_1) \overline{c^+(n_2)} \int_0^\infty y^{s+k-1} e^{-2\pi (n_1+n_2+\ell) y} \frac{dy}{y} \\
	&\phantom{:}= \frac{\Gamma(s+k-1)}{(4\pi)^{s+k-1}}
		\sum_{n_2 =1}^\infty \frac{c^+(n_2+\ell)\overline{c^+(n_2)}}{(n_2+\ell)^{s+k-1}}.
\end{align*}
Since $f \in H_k^\sharp$, the Dirichlet series in the line above converges absolutely in some right half-plane. More precisely, Lemma~\ref{lem:Muller-coefficient-averages} gives convergence in $\Re s > 1 + \vert k-1 \vert$, extending to $\Re s > 1$ in the cuspidal case.

The net contributions from $(n_1,n_2) = (\ell,0)$ and $(0,-\ell)$ total
\begin{align} \label{eq:I_h^0-definition}
	I_\ell^0(s) :=& \frac{c^+(\ell) \overline{c^-(0)} \Gamma(s)}{(4\pi \ell)^s}
		+ \frac{c^+(\ell) \overline{c^+(0)} \Gamma(s+k-1)}{(4\pi \ell)^{s+k-1}} \\
	&\quad + \frac{c^-(0) \overline{c^-(\ell)}\Gamma(s-k+1)}{(4\pi \ell)^s s} 
		+ \frac{c^+(0) \overline{c^-(\ell)} \Gamma(s)}{(4\pi \ell)^{s+k-1} (s+k-1)}.
\end{align}
The function $I_\ell^0(s)$ is meromorphic in $s \in \mathbb{C}$ and analytic in $\Re s > \vert k-1\vert$. Note that $I_\ell^0(s)$ vanishes when $f$ is cuspidal.

There is another finite collection of ``cross terms'' when $n_1 > 0$ and $n_2 < 0$, which contributes
\begin{align} \nonumber
	I_\ell^\times(s) & := \sum_{m=1}^{\ell-1} c^+(\ell-m) \overline{c^-(m)}
		\int_0^\infty y^{s+k-1} e^{-4\pi (\ell-m) y} \Gamma(1-k, 4\pi m y) \frac{dy}{y} \\ \label{eq:I_h^x-definition}
	& \phantom{:}= \frac{\Gamma(s)}{s+k-1} \sum_{m=1}^{\ell-1}
		\frac{c^+(\ell-m) \overline{c^-(m)}}{(4\pi m)^{s+k-1}}
		{}_2F_1\Big( \begin{matrix} s, s+k-1 \\ s+k \end{matrix} \Big\vert 1-\frac{\ell}{m} \Big),
\end{align}
in which we've evaluated the integral via~\cite[6.455(1)]{GradshteynRyzhik07}. The function $I_\ell^\times(s)$ is analytic in the right half-plane $\Re s > \max(0, 1-k)$ and has an obvious meromorphic continuation to $s \in \mathbb{C}$.

Lastly, we record the contribution of $n_1,n_2 < 0$, which can be written
\begin{align} \label{eq:I-minus-definition}
	I_\ell^-(s) &:=  
	\sum_{n=1}^\infty \frac{c^-(n) \overline{c^-(n+\ell)}}{(4\pi)^{s+k-1}} G_k(s, n, n+\ell); \\
	& G_k(s,n,n+\ell):= \int_0^\infty y^{s+k-1} \Gamma(1-k, n y)  \Gamma(1-k, (n+\ell) y) e^{n y} \frac{dy}{y}.
\end{align}
The two asymptotic expressions $\Gamma(\beta,y) = \Gamma(\beta) - y^\beta/\beta + O_\beta(y^{\beta +1})$ as $y \to 0$ and $\Gamma(\beta, y) = e^{-y} y^{\beta -1} (1+O_\beta(y^{-1}))$ as $y \to \infty$ imply that $G_k(s,n,n+\ell)$ converges absolutely when $\Re s > \vert k -1 \vert$. In this region, $G_k(s,n,n+\ell) \ll G_k(\Re s, n,n)  \ll_{\Re s } n^{-\Re s -k+1}$ by change of variable. Thus $I_\ell^-(s)$ converges to an analytic function in $\Re s > 1 + \vert k - 1 \vert$, extending to the domain $\Re s > 1$ in the cuspidal case.

We conclude that the unfolding procedure is valid in $\Re s > 1 + \vert k -1 \vert$, and that in this region we have the decomposition
\begin{align} \label{eq:inner-product-decomposition}
	\langle y^k \vert f \vert^2, P_\ell(\cdot, \overline{s}) \rangle 
	= I_\ell^+(s) + I_\ell^0(s) + I_\ell^\times(s) + I_\ell^-(s).
\end{align}

\subsection{Application to \texorpdfstring{$\mathcal{H}(z)$}{H(z)}} \label{sec:H-decomposition}

The formulas in this section simplify considerably for the specific form $\mathcal{H} \in H_{3/2}^\sharp(\Gamma_0(4))$ defined in~\eqref{eq:H-definition}. Recall from~\eqref{eq:D_h-definition} the definition of the shifted convolution Dirichlet series
\begin{align} \label{eq:D_h-definition-redux}
	D_\ell(s) :=
		\sum_{n \geq 1} \frac{H(n) H(n+\ell)}{(n+\ell)^{s+\frac{1}{2}}},
\end{align}
which converges absolutely in $\Re s > \frac{3}{2}$. Simplifying the various terms at right in~\eqref{eq:inner-product-decomposition} produces the formula
\begin{align} \label{eq:Hurwitz-inner-product-decomposition}
\nonumber
\langle y^{\frac{3}{2}} \vert \mathcal{H} \vert^2, P_\ell(\cdot, \overline{s}) \rangle 
	&= \frac{\Gamma(s+\tfrac{1}{2})}{(4\pi)^{s+\frac{1}{2}}}
		D_\ell(s)
		 - \frac{H(\ell)\Gamma(s-\frac{1}{2})}{12 (4\pi \ell)^{s+\frac{1}{2}}} 
	   - \frac{H(\ell)\Gamma(s)}{8\pi (4\pi \ell)^s} \\
	& \quad
		- \frac{r_1(\ell) \Gamma(s-\frac{1}{2})}{128 \pi^{2} (4\pi \ell)^{s-\frac{1}{2}} s}
		- \frac{r_1(\ell) \Gamma(s)}{192 \pi (4\pi \ell)^{s} (s+\frac{1}{2})} \\
	& \quad
		+ \frac{\Gamma(s)}{s+\frac{1}{2}} 
		\sum_{m=1}^{\ell-1} \frac{H(\ell-m)  r_1(m)}{16 \pi (4\pi m)^{s}} 
		{}_2F_1\Big(\begin{matrix} s, s+\frac{1}{2} \\ s+\frac{3}{2} \end{matrix}
					\Big\vert 1-\frac{\ell}{m} \Big) \\
	& \quad + \sum_{\substack{m_1^2 - m_2^2 =\ell \\ m_1,m_2 \geq 1}}
		\frac{m_1 m_2}{4 (4\pi)^{s+\frac{3}{2}}} G_\frac{3}{2}(s,m_2^2, m_1^2).
\end{align}

We remark that the contribution of $I_\ell^-(s)$ in the fourth line of~\eqref{eq:Hurwitz-inner-product-decomposition} is a finite sum, since $m_1^2 - m_2^2 = \ell$ has finitely many solutions. This phenomenon generalizes to any $f \in H_{3/2}^\sharp(\Gamma_0(N))$ for which $M_{1/2}(\Gamma_0(N))$ is one-dimensional, since in that case $\xi_{3/2} f$ is necessarily a twisted theta function by~\cite[Theorem A]{SerreStark77}.
Thus, in departure from the general case, we conclude that $I_\ell^-(s)$ is analytic in $\Re s > \frac{1}{2}$.

Secondly, we remark that the contribution of $I_\ell^\times(s)$ bears some resemblance to one side of the Eichler--Selberg class number relation (cf.~\eqref{eq:class-number-relation}). More specifically,
\[
	\Res_{s=0} \frac{\Gamma(s)}{s+\frac{1}{2}} 
		\sum_{m=1}^{\ell-1} \frac{H(\ell-m)  r_1(m)}{16 \pi (4\pi m)^{s}} 
		{}_2F_1\Big(\begin{matrix} s, s+\frac{1}{2} \\ s+\frac{3}{2} \end{matrix}
		\Big\vert 1-\tfrac{\ell}{m} \Big) 
	=
	 \frac{1}{4\pi} \! \sum_{m^2 < \ell} H(\ell-m^2),
\]
which is essentially one of the sums described in~\cite[\S{10.3}]{BFOR19}. It would be interesting to know if the methods in this paper could be used to produce new class number relations.


\section{Automorphic Regularization} \label{sec:automorphic-regularization}

To produce a meromorphic continuation for the Dirichlet series $D_\ell(s)$, we first show that the inner product $\langle y^k \vert \mathcal{H} \vert^2, P_\ell(\cdot, \overline{s}) \rangle$ has a meromorphic continuation to a larger domain. This latter continuation involves the spectral decomposition of $P_\ell(z,s)$ with respect to the hyperbolic Laplacian and is complicated by the fact that $y^{3/2} \vert \mathcal{H}(z) \vert^2 \notin L^2(\Gamma_0(4) \backslash \mathfrak{h})$. To rectify this, we modify $y^{3/2} \vert \mathcal{H}(z) \vert^2$ by subtracting a linear combination of automorphic forms chosen to neutralize growth at the cusps of $\Gamma_0(N)$.

We define the weight $0$ Eisenstein series attached to cusp $\mathfrak{a}$ of $\Gamma_0(N)$ by
\[
	E_\mathfrak{a}(z,s) = \sum_{\gamma \in \Gamma_\mathfrak{a} \backslash \Gamma_0(N)}
		\Im(\sigma_\mathfrak{a}^{-1}\gamma z)^s.
\]
These Eisenstein series have Fourier expansion at the cusp $\mathfrak{b}$ of the form
\begin{align} \label{eq:Fourier-Eisenstein}
	E_\mathfrak{a}(\sigma_\mathfrak{b} z,w) 
		= \delta_{[\mathfrak{a}=\mathfrak{b}]} y^w 
			&+ \pi^\frac{1}{2} \frac{\Gamma(w-\frac{1}{2})}{\Gamma(w)} \varphi_{\mathfrak{ab}0}(w) y^{1-w} \\
		&+ \frac{2\pi^w y^\frac{1}{2}}{\Gamma(w)} 
		\sum_{n \neq 0} \varphi_{\mathfrak{ab}n}(w) \vert n \vert^{w-\frac{1}{2}} 
			K_{w-\frac{1}{2}}(2\pi \vert n \vert y) e(nx),
\end{align}
in which $\delta_{[\cdot]}$ denotes the Kronecker delta, $K_\nu(y)$ is the $K$-Bessel function, and the coefficients $\varphi_{\mathfrak{ab}n}(w)$ are described in~\cite{DeshouillersIwaniec82}, for example. 

As in the previous section, we first consider a general $f(z) \in H_k^\sharp(\Gamma_0(N))$, with $k \in \frac{1}{2} \mathbb{Z}$ ($k \neq 1$), specializing to $f = \mathcal{H}$ when convenient. Let $F_\mathfrak{a}(z) := y^k \vert f\vert_{\sigma_{\mathfrak{a}}}(z) \vert^2 = \Im(\sigma_{\mathfrak{a}} z)^k \vert f(\sigma_{\mathfrak{a}} z) \vert^2$. If $\varkappa_\mathfrak{a} \neq 0$, then $F_\mathfrak{a}(z)$ decays exponentially as $y \to \infty$ by the Fourier expansion~\eqref{eq:harmonic-Fourier-expansion-general}, and no regularization is required. Otherwise, when $\varkappa_\mathfrak{a} = 0$,~\eqref{eq:harmonic-Fourier-expansion-general} implies that
\begin{align} \label{eq:F-growth}
	F_\mathfrak{a}(z) = y^k \vert c_\mathfrak{a}^+(0) + c_\mathfrak{a}^-(0) y^{1-k} \vert^2 + O(y^{-M})
\end{align}
as $y \to \infty$ for all $M> 0$. It therefore suffices to regularize growth of sizes $y^k$, $y^1$, and $y^{2-k}$ at the singular cusps.

For $k>1$, the Eisenstein series $E_\mathfrak{a}(z,k)$ counteracts growth of size $y^k$ at $\mathfrak{a}$, while for $k<1$ we utilize $E_\mathfrak{a}(z,2-k)$ to address $y^{2-k}$. Unfortunately, this technique fails to regularize growth of size $y^1$, since $E_\mathfrak{a}(z,w)$ has a pole at $w=1$. In this case, we instead subtract a multiple of the constant term in the Laurent expansion of $E_\mathfrak{a}(z,w)$ at $w=1$, which we denote $\widetilde{E}_\mathfrak{a}(z,1)$, and which satisfies $\widetilde{E}_\mathfrak{a}(\sigma_{\mathfrak{b}} z,1) = \delta_{[\mathfrak{a} = \mathfrak{b}]} y - \pi \log y \Res_{w=1} \varphi_{\mathfrak{ab}0}(w) + \widetilde{c}_{\mathfrak{ab}}+O(e^{-2\pi y})$ for some constant $\widetilde{c}_{\mathfrak{ab}}$. Thus, for example,
\begin{align} 
	\begin{split} \label{eq:regularization}
		\mathcal{V}_f(z) := F(z) 
				&- \sum_{\mathfrak{a} : \varkappa_\mathfrak{a} = 0}
				\vert c_\mathfrak{a}^-(0) \vert^2 E_\mathfrak{a}(z,2-k) \\
				&- 2\sum_{\mathfrak{a} : \varkappa_\mathfrak{a} = 0}
				\Re ( c_\mathfrak{a}^+(0) \overline{c_\mathfrak{a}^-(0)})
				\widetilde{E}_\mathfrak{a}(z,1)
	\end{split}
\end{align}
satisfies $\mathcal{V}_f(\sigma_{\mathfrak{a}} z) = O(y^{k} + \log y)$ as $y \to \infty$ when $k<1$. If $k<\frac{1}{2}$, it follows that $\mathcal{V}_f \in L^2(\Gamma_0(N) \backslash \mathfrak{h})$. The case $k>\frac{3}{2}$ may be treated analogously. 

The situation is more complicated in weights $k=\frac{1}{2}$ and $k=\frac{3}{2}$, as here we must regularize terms of size $y^{1/2}$. The obvious choice for regularizing $y^{1/2}$ is to subtract a multiple of $E_\mathfrak{a}(z,\frac{1}{2})$, but this term equals $0$ since the completed Eisenstein series $\zeta^*(2w) E_\mathfrak{a}(z,w)$ is analytic at $w=\frac{1}{2}$. Likewise, it is not possible to regularize with a linear combination of terms of the form $\lim_{w\to \frac{1}{2}} \zeta^*(2w) E_\mathfrak{a}(z,w)$, as these grow as $y^{1/2} \log y$ near $\mathfrak{a}$.

In weight $k =\frac{3}{2}$, the growth of size $y^{1/2}$ comes from the non-holomorphic part (cf.~\eqref{eq:F-growth}). In particular, we can regularize all cusp growth of size $y^{1/2}$ simultaneously by subtracting an appropriate multiple of $y^{1/2} \vert \xi_{3/2} f \vert^2$. Specifically, we define
\begin{align} \label{eq:regularization-3/2}
\begin{split} 
	\mathcal{V}_f(z) := F(z) 
		& - \sum_{\mathfrak{a} : \varkappa_\mathfrak{a} = 0}
			\vert c_\mathfrak{a}^+(0) \vert^2 E_\mathfrak{a}(z,\tfrac{3}{2}) \\
		& - 2\sum_{\mathfrak{a} : \varkappa_\mathfrak{a} = 0}
			\Re ( c_\mathfrak{a}^+(0) \overline{c_\mathfrak{a}^-(0)})
				\widetilde{E}_\mathfrak{a}(z,1)
		- 4y^{\frac{1}{2}} \vert \xi_{\frac{3}{2}} f(z) \vert^2.
\end{split}
\end{align}
Then $\mathcal{V}_f(\sigma_{\mathfrak{a}} z) = O(\log y)$ at each cusp by~\eqref{eq:shadow-definition}, so $\mathcal{V}_f \in L^2(\Gamma_0(N) \backslash \mathfrak{h})$.

In weight $k=\frac{1}{2}$, we may likewise attempt to regularize by subtracting a function of the form $y^{1/2} \vert g(z) \vert^2$, where $g \in M_{1/2}(\Gamma_0(N))$. However, there is no guarantee that a modular form with compatible cusp growth need exist. If $f \in H_{1/2}^\sharp(\Gamma_0(N))$ is chosen, we may test for the existence of a compatible $g$ using the basis for $M_{1/2}(\Gamma_0(N))$ described in~\cite[Theorem A]{SerreStark77}. Since we do not require $k=\frac{1}{2}$ for our principal application, we leave the question of the existence of a compatible $g$ as an interesting open problem.

\subsection{Automorphic Regularization of \texorpdfstring{$\mathcal{H}(z)$}{H(z)}} 

In practical terms, the problem of regularizing $\mathcal{H}(z)$ reduces to the problem of computing the constant Fourier coefficients $c_\mathfrak{a}^\pm(0)$ at each singular cusp $\mathfrak{a}$ of $\Gamma_0(4)$. In this section, we determine these coefficients, as summarized in the following proposition.

\begin{proposition} \label{prop:H-cusp-behavior}
Let $\mathcal{H} \in H_{3/2}^\sharp(\Gamma_0(4))$ denote Zagier's non-holomorphic Eisenstein series from~\eqref{eq:H-definition}. The cusp $\mathfrak{a} = \frac{1}{2}$ is non-singular for $\upsilon_\theta$; for the other cusps, $\mathcal{H}(z)$ has a Fourier expansion of the form~\eqref{eq:harmonic-Fourier-expansion-general}, in which
\[
	c_\infty^+(0) = - \frac{1}{12}, \qquad
	c_\infty^-(0) = \frac{1}{8\pi}, \qquad
	c_0^+(0) = \frac{1}{24}, \qquad
	c_0^-(0) = -\frac{1}{8\pi}.
\]
Consequently, the function
\begin{align}
	\mathcal{V}_\mathcal{H}(z) &:= y^{\frac{3}{2}} \vert \mathcal{H}(z) \vert^2 
		 - \frac{1}{144} E_\infty(z,\tfrac{3}{2}) - \frac{1}{576} E_0(z,\tfrac{3}{2}) \\
		& \qquad + \frac{1}{48 \pi} \widetilde{E}_\infty(z,1) + \frac{1}{96 \pi} \widetilde{E}_0(z,1)
		- \frac{1}{64 \pi^2} y^{\frac{1}{2}} \vert \theta(z) \vert^2
\end{align}
lies in $L^2(\Gamma_0(4) \backslash \mathfrak{h})$.
\end{proposition}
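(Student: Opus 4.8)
The plan is to compute the Fourier expansion of $\mathcal{H}$ at each cusp of $\Gamma_0(4)$ and then apply the general regularization recipe~\eqref{eq:regularization-3/2} with $f = \mathcal{H}$, $k = \frac{3}{2}$, $N = 4$. The cusps of $\Gamma_0(4)$ are $\infty$, $0$, and $\frac{1}{2}$. First I would recall from~\eqref{eq:H-definition} that the expansion at $\infty$ is given explicitly: comparing with~\eqref{eq:harmonic-Fourier-expansion} in weight $k=\frac{3}{2}$, the holomorphic part contributes $c_\infty^+(0) = H(0) = -\frac{1}{12}$ (using Zagier's convention), while the term $\frac{1}{8\pi\sqrt{y}} = \frac{1}{8\pi} y^{1-k}$ identifies $c_\infty^-(0) = \frac{1}{8\pi}$; the remaining sum $\sum_{n\geq 1} \frac{n\,\Gamma(-1/2, 4\pi n^2 y)}{4\sqrt\pi} e(-n^2 z)$ supplies the nonzero Fourier modes of the non-holomorphic part, confirming (consistently with the shadow formula~\eqref{eq:shadow-definition}) that $\xi_{3/2}\mathcal{H} = \frac{1}{16\pi}\theta$ up to an elementary constant, which is exactly what is needed to identify the $y^{1/2}\vert\xi_{3/2}f\vert^2$ correction with a multiple of $y^{1/2}\vert\theta\vert^2$.

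Next I would handle the cusp $\mathfrak{a} = 0$. Choose the scaling matrix $\sigma_0 = \left(\begin{smallmatrix} 0 & -1/2 \\ 2 & 0 \end{smallmatrix}\right)$ (so that $\sigma_0$ maps $\infty$ to $0$ and has determinant $1$), and compute $\mathcal{H}\vert_{\sigma_0}$ using the weight-$\frac{3}{2}$ slash operator defined via $\theta$. The cleanest route is to use the known transformation behavior of $\mathcal{H}(z)$ under the Fricke-type and Atkin--Lehner involutions on $\Gamma_0(4)$ — these are classical and appear in Zagier's original work and in~\cite{BFOR19} — which express $\mathcal{H}\vert_{\sigma_0}$ in terms of $\mathcal{H}$ itself (or a companion Eisenstein-type series) plus lower-order pieces; alternatively one can transport the expansion directly using the modularity of $\theta$ under $\Gamma_0(4)$ and the Poisson summation / Jacobi theta transformation. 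Either way, extracting the constant terms yields $c_0^+(0) = \frac{1}{24}$ and $c_0^-(0) = -\frac{1}{8\pi}$. For the cusp $\frac{1}{2}$, I would compute the cusp parameter $\varkappa_{1/2}$ for the multiplier $\upsilon_\theta$: writing $t_{1/2}$ for the generator of the stabilizer and evaluating $\upsilon_\theta(t_{1/2})$ via the explicit formula $\upsilon_\theta\left(\left(\begin{smallmatrix} a&b\\c&d\end{smallmatrix}\right)\right) = \epsilon_d^{-1}\left(\frac{c}{d}\right)$, one finds $\upsilon_\theta(t_{1/2}) \neq 1$, so $\varkappa_{1/2} \neq 0$ and $\frac{1}{2}$ is non-singular; by the discussion following~\eqref{eq:F-growth}, $F_{1/2}(z)$ decays exponentially and needs no regularization.

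Finally, with all constant coefficients in hand, I substitute into~\eqref{eq:regularization-3/2}. The Eisenstein coefficients are $\vert c_\infty^+(0)\vert^2 = \frac{1}{144}$ and $\vert c_0^+(0)\vert^2 = \frac{1}{576}$; the $\widetilde{E}_\mathfrak{a}(z,1)$ coefficients are $-2\Re(c_\mathfrak{a}^+(0)\overline{c_\mathfrak{a}^-(0)})$, giving $-2 \cdot (-\frac{1}{12})(\frac{1}{8\pi}) = \frac{1}{48\pi}$ at $\infty$ and $-2\cdot(\frac{1}{24})(-\frac{1}{8\pi}) = \frac{1}{96\pi}$ at $0$; and the shadow term $4y^{1/2}\vert\xi_{3/2}\mathcal{H}\vert^2 = 4 y^{1/2}\vert\frac{1}{16\pi}\theta(z)\vert^2 = \frac{1}{64\pi^2} y^{1/2}\vert\theta(z)\vert^2$. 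This reproduces the stated formula for $\mathcal{V}_\mathcal{H}(z)$, and the $L^2$ membership follows immediately from the general assertion accompanying~\eqref{eq:regularization-3/2} (namely that after these subtractions $\mathcal{V}_f(\sigma_\mathfrak{a} z) = O(\log y)$ at every cusp, which is square-integrable against the hyperbolic measure on a fundamental domain). I expect the main obstacle to be the bookkeeping in the cusp-$0$ computation: correctly normalizing the scaling matrix $\sigma_0$, tracking the automorphy factor $(\theta(\sigma_0 z)/\theta(z))^{-3}$ with its multiplier, and making sure the non-holomorphic piece transforms with the right sign so that the identification $\xi_{3/2}(\mathcal{H}\vert_{\sigma_0})$ is again (a constant times) $\theta$ — it is here that an error in a root of unity or a factor of $2$ would propagate into the final constants.
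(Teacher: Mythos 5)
Your outline matches the paper's strategy — determine the constant Fourier coefficients of $\mathcal{H}$ at each cusp of $\Gamma_0(4)$, then feed them into the general regularization template~\eqref{eq:regularization-3/2} — and you carry out the easy parts correctly: the coefficients $c_\infty^\pm(0)$ read off directly from~\eqref{eq:H-definition}, the computation of $\varkappa_{1/2}$ via $\upsilon_\theta(t_{1/2})$ showing the cusp $\tfrac12$ is non-singular, the identification of the shadow with $\theta$ (your sign differs from the paper's $\xi_{3/2}\mathcal{H} = -\tfrac{1}{16\pi}\theta$, but this is immaterial since only $\vert\xi_{3/2}\mathcal{H}\vert^2$ enters), and the final substitution of the coefficients into~\eqref{eq:regularization-3/2} to obtain the displayed $\mathcal{V}_\mathcal{H}$ and its $L^2$ membership.

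The genuine gap is the cusp-$0$ computation, which you flag as the bookkeeping-heavy part but do not actually carry out. Your proposed route — express $\mathcal{H}\vert_{\sigma_0}$ via Fricke/Atkin--Lehner involutions, hoping it comes back to $\mathcal{H}$ itself — does not go through as stated: $\mathcal{H}$ is not a Fricke eigenform, and $\mathcal{H}\vert_{\sigma_0}$ is a genuinely different linear combination of the two Hirzebruch--Zagier Eisenstein series $E_{3/2,s}$ and $F_{3/2,s} := z^{-3/2}\vert z\vert^{-2s} E_{3/2,s}(-1/4z)$. The essential ingredient the paper uses, and which your sketch omits, is the Hirzebruch--Zagier identity $\mathcal{H} = \mathcal{F}_0$ with $\mathcal{F}_s = -\tfrac{1}{96}\bigl((1-i)E_{3/2,s} - iF_{3/2,s}\bigr)$, together with the explicit constant Fourier coefficients of $E_{3/2,s}$ and $F_{3/2,s}$ from Hirzebruch--Zagier. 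Slashing $\mathcal{F}_s$ by $\sigma_0$ merely swaps and rescales the roles of $E_{3/2,s}$ and $F_{3/2,s}$ (with $\vert z\vert^{2s}$-factors that vanish at $s=0$), and one then takes $s \to 0$ and assembles the constant term. Your alternative suggestion of a direct Poisson/theta-transformation argument faces the same obstacle: the non-holomorphic piece of $\mathcal{H}$ is not a simple theta series, and one has no closed-form transformation for it without re-deriving the HZ decomposition. So as written the proof proposal asserts the values $c_0^+(0) = \tfrac{1}{24}$ and $c_0^-(0) = -\tfrac{1}{8\pi}$ rather than deriving them, and the route it gestures at would not produce them without first recognizing the HZ Eisenstein-series realization of $\mathcal{H}$.
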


\begin{proof}
To verify that $\mathfrak{a} = \frac{1}{2}$ is non-singular, we first note that $\Gamma_{1/2}$ is generated by $t_{1/2} = (\begin{smallmatrix} -1 & 1 \\ -4 & 3 \end{smallmatrix})$. Since $\upsilon_\theta(t_{1/2}) = i$, we have $\varkappa_{1/2} = \frac{1}{4} \neq 0$.

As for the singular cusps, we clearly have $c_\infty^+(0) = -\frac{1}{12}$ and $c_\infty^-(0) = \frac{1}{8\pi}$ from the Fourier expansion~\eqref{eq:H-definition}. 
To understand the behavior of $\mathcal{H}(z)$ near $\mathfrak{a} = 0$, we follow~\cite{HirzebruchZagier76} and relate $\mathcal{H}(z)$ to certain Eisenstein series of weight $\frac{3}{2}$. Specifically, we introduce the Eisenstein series
\[
	E_{\frac{3}{2},s}(z) 
		:= \mathop{\sum\sum}\limits_{\substack{m>0, n \in \mathbb{Z} \\ (m,2n)=1}}
				\frac{(\frac{n}{m}) \, \epsilon_m}{(mz+n)^{3/2} \vert m z + n \vert^{2s}},
\]
as well as a second Eisenstein series $F_{3/2,s}(z) := z^{-3/2} \vert z \vert^{-2s} E_{3/2,s}(-1/4z)$. Though $E_{3/2,s}$ converges only for $\Re s > \frac{1}{4}$,~\cite[Theorem 2]{HirzebruchZagier76} implies that $E_{3/2,s}$ and $F_{3/2,s}$ have meromorphic continuation to $s \in \mathbb{C}$ and that
\[
\mathcal{F}_s(z) := - \tfrac{1}{96}\big((1-i) E_{3/2,s}(z) - i F_{3/2,s}(z) \big)
\]
satisfies $\mathcal{F}_0(z) = \mathcal{H}(z)$.

To investigate $\mathcal{H}(z)$ near the cusp $0$, we compute a partial Fourier expansion of $\mathcal{F}_s \vert_{\sigma_0}(z)$, where $\sigma_0 = (\begin{smallmatrix} 0 & -1 \\ 4 & 0 \end{smallmatrix})$. The functional equation $\theta(-1/4z) = (-2iz)^{1/2} \theta(z)$ implies that the weight $k=\frac{3}{2}$ slash operator satisfies
\begin{align*}
	\mathcal{F}_s \vert_{\sigma_0}(z) 
		&= (-2iz)^{-\frac{3}{2}} \mathcal{F}_s(-\tfrac{1}{4z}) \\
		&=  - \tfrac{1}{96} (-2iz)^{-\frac{3}{2}} 
					\big((1-i) z^{\frac{3}{2}} \vert z \vert^{2s} F_{3/2,s}(z) 
						- i (-\tfrac{1}{4z})^{-\frac{3}{2}} \vert 4z \vert^{2s} E_{3/2,s}(z) \big) \\
		&= \frac{-i}{192} \vert z \vert^{2s} F_{3/2,s}(z) 
					+ \frac{1-i}{48} \vert z \vert^{2s} 2^{4s} E_{3/2,s}(z).
\end{align*}
Thus $\mathcal{H}\vert_{\sigma_0}(z)$ has a Fourier expansion which may be read from the Fourier coefficients of $E_{3/2,0}(z)$ and $F_{3/2,0}(z)$. Since we require only the constant Fourier coefficient of $\mathcal{H}\vert_{\sigma_0}$, it suffices to consider the constant Fourier coefficients of $E_{3/2,s}$ and $F_{3/2,s}$.

By~\cite[p.93-94]{HirzebruchZagier76}, the constant Fourier coefficient of $E_{3/2,s}(z)$ equals
\begin{align*}
	& \alpha_0(s,y) \sum_{m \text{ odd}} \frac{\epsilon_m \sum_{j(m)} (\frac{j}{m})}{m^{2s+\frac{3}{2}}} 
	= \alpha_0(s,y) \sum_{m \text{ odd}} \frac{\epsilon_{m^2} \varphi(m^2)}{(m^2)^{2s+\frac{3}{2}}}  \\
	& \qquad = \alpha_0(s,y) \sum_{m \text{ odd}} \frac{\varphi(m)}{m^{4s+2}}
	= \alpha_0(s,y) \frac{\zeta(4s+1)}{\zeta(4s+2)} \cdot \frac{1- 2^{-4s-1}}{1-2^{-4s-2}},
\end{align*}
in which $\varphi(m)$ denotes the totient function and $\alpha_0(s,y)$ is defined by
\[
	\alpha_0(s,y) = \int_{\Im w = y} w^{-\frac{3}{2}} \vert w \vert^{-2s} dw
		= - \frac{2(1+i)\sqrt{\pi} s \Gamma(2s+\frac{1}{2})}{\Gamma(2s+2)} y^{-\frac{1}{2}-2s}.
\]
By taking the limit as $s \to 0$, we conclude that the constant Fourier coefficient of $E_{3/2,0}(z)$ equals $-(2+2i)/\pi \cdot y^{-1/2}$.

Similarly, formulas on~\cite[p.94]{HirzebruchZagier76} show that $F_{3/2,s}$ has constant Fourier coefficient
\begin{align*}
	& 2^{2s+3} i +(1+i)\alpha_0(s,y) \sum_{m \text{ even}}
		\frac{m^{-1/2} \sum_{j(2m)} (\frac{m}{j}) e(\frac{j}{8})}{(m/2)^{2s+1}}.
\end{align*}
The sum $\sum_{j \bmod 2m} (\frac{m}{j})e(j/8)$ vanishes for $m$ even unless $m=2n^2$, in which case it equals $\sqrt{2} \varphi(2n^2)$. Thus the constant Fourier coefficient equals
\begin{align*}
	& 2^{2s+3} i + (1+i)\alpha_0(s,y) \sum_{n \geq 1} \frac{\varphi(2n^2)}{n^{4s+3}}
	= 2^{2s+3} i + (1+i) \alpha_0(s,y) \sum_{n \geq 1} \frac{\varphi(2n)}{n^{4s+2}} \\
	& \qquad
	= 2^{2s+3} i + (1+i) \alpha_0(s,y) 2^{4s+2}
		\frac{\zeta(4s+1)}{\zeta(4s+2)} \Big(1 - \frac{1-2^{-4s-1}}{1-2^{-4s-2}}\Big).
\end{align*}
By taking the limit as $s \to 0$, we conclude that the constant Fourier coefficient of $F_{3/2,0}(z)$ equals $8i -8i/(\pi y^{1/2})$. It follows that the constant Fourier coefficient of $\mathcal{H}\vert_{\sigma_0}(z)$ equals
\[
	\frac{-i}{192} \Big(8i - \frac{8i}{\pi \sqrt{y}}\Big) + \frac{1-i}{48} \Big(\frac{-2-2i}{\pi y^{1/2}}\Big)
	= \frac{1}{24} - \frac{1}{8\pi \sqrt{y}},
\]
hence $c_0^+(0) = \frac{1}{24}$ and $c_0^-(0) = - \frac{1}{8\pi}$. 
\end{proof}


\section{Inner Products Involving Regularization Terms} \label{sec:regularization-inner-products}

As before, we fix $k \in \frac{1}{2} \mathbb{Z}$ and $f(z) \in H_k^\sharp(\Gamma_0(N))$ and define $F(z) = y^k \vert f(z) \vert^2$. In section~\S{\ref{sec:automorphic-regularization}}, we showed that $F(z)$ differed from an $L^2$ function $\mathcal{V}_f(z)$ by a sum involving Eisenstein series and theta functions, at least when $k \notin \{\frac{1}{2}, 1\}$. In this section, we relate $\langle \mathcal{V}_f, P_\ell(\cdot,\overline{s}) \rangle$ to $\langle F, P_\ell(\cdot,\overline{s}) \rangle$ by accounting for the contribution of these regularization terms.

To compute the inner products of the form $\langle E_\mathfrak{a}(\cdot, w), P_\ell(\cdot, \overline{s}) \rangle$, we recall the Fourier expansion of $E_\mathfrak{a}(z,w)$ from~\eqref{eq:Fourier-Eisenstein}. We unfold the inner product using the Poincar\'e series as in~\eqref{eq:Poincare-unfolding} to produce
\begin{align} \label{eq:Eisenstein-Poincare}
	\langle E_\mathfrak{a}(\cdot, w), P_\ell(\cdot, \overline{s}) \rangle 
		=\frac{2\pi^{w+\frac{1}{2}}}{(4\pi \ell)^{s-\frac{1}{2}}} \ell^{w-\frac{1}{2}} 
			\varphi_{\mathfrak{a} \infty \ell}(w) \frac{\Gamma(s+w-1)\Gamma(s-w)}{\Gamma(s)\Gamma(w)},
	\qquad
\end{align}
provided that $\Re s >  \frac{1}{2} + \vert \Re w - \frac{1}{2} \vert$ to begin. We write $\varphi_{\mathfrak{a} \infty n}(w) = \varphi_{\mathfrak{a} n}(w)$ for brevity and remark that formulas for these coefficients appear in~\cite{DeshouillersIwaniec82}.

The functions $\varphi_{\mathfrak{a} n}(w)$ have meromorphic continuation in $w$. For $n \neq 0$, they are analytic at $w=1$. Thus~\eqref{eq:Eisenstein-Poincare} implies that
\begin{align}
	\langle \widetilde{E}_\mathfrak{a}(\cdot, 1), P_\ell(\cdot, \overline{s}) \rangle 
		=\frac{\pi \varphi_{\mathfrak{a} \ell}(1)}{(4\pi \ell)^{s-1}}
			 \Gamma(s-1).
\end{align}

Lastly, we consider inner products of the form $\langle y^\frac{1}{2} \vert g(z) \vert^2, P_\ell(\cdot, \overline{s}) \rangle$, in which $g \in M_{1/2}(\Gamma_0(N))$. Suppose that $g(z) = \sum b(n) e(nz)$.  Then
\[
	\langle y^\frac{1}{2} \vert g(z) \vert^2, P_\ell(\cdot, \overline{s}) \rangle
	= \frac{\Gamma(s-\frac{1}{2})}{(4\pi)^{s-\frac{1}{2}}}
		\sum_{n \geq 0} \frac{b(n+\ell)\overline{b(n)}}{(n+\ell)^{s-\frac{1}{2}}}.
\]
By~\cite[Theorem A]{SerreStark77}, $M_{1/2}(\Gamma_0(N))$ is spanned by theta functions of the form $\sum \chi_t(n) e(n^2 t z)$, with $t$ square-free and satisfying $4 \,\mathrm{cond}(\chi_t)^2 t \mid N$, where $\mathrm{cond}(\chi)$ denotes the conductor of $\chi$, $\chi_t(n) = (\frac{t}{n})$ if $t \equiv 1 \bmod 4$, and $\chi_t(n) = (\frac{4t}{n})$ otherwise. We note that $\mathrm{cond}(\chi_t) = t$ if $t \equiv 1 \bmod 4$ and $4t$ otherwise.  In particular, $\{b(n)\}$ is supported on integers of the form $n^2 t$, where $n \in \mathbb{Z}$ and $4t^3 \mid N$. Thus
\[
	\langle y^\frac{1}{2} \vert g(z) \vert^2, P_\ell(\cdot, \overline{s}) \rangle
	= \frac{\Gamma(s-\frac{1}{2})}{(4\pi)^{s-\frac{1}{2}}}
		\sum_{\substack{t_1^3, t_2^3 \mid \frac{N}{4} \\ t_i \text{ square-free}}}
		\sum_{n_1^2 t_1 = n_2^2 t_2 + \ell} 
				\frac{b(n_1^2t_1)\overline{b(n_2^2 t_2)}}{(n_1^2t_1)^{s-\frac{1}{2}}}.
\]

If $M_{1/2}(\Gamma_0(N))$ is one-dimensional (for example, if $\frac{N}{4}$ is cube-free), then $t_1 = t_2=1$. In this case, the inner sum $n_1^2 = n_2^2 + \ell$ has finitely many solutions. Otherwise, the sum may be infinite (depending on $\ell$). Since the solution set $(n_1,n_2)$ of $n_1^2 t_1 = n_2^2 t_2 + \ell$ is exponentially sparse in any case, the sum above always converges for $\Re s > \frac{1}{2}$. Thus $\langle y^\frac{1}{2} \vert g(z) \vert^2, P_\ell(\cdot, \overline{s}) \rangle$ is analytic in $\Re s > \frac{1}{2}$ no matter the dimension of $M_{1/2}(\Gamma_0(N))$.

\begin{remark} \label{rem:theta-shift-continuation}
In fact, $\langle y^\frac{1}{2} \vert g(z) \vert^2, P_\ell(\cdot, \overline{s}) \rangle$ has a meromorphic continuation to all $s \in \mathbb{C}$. To see this, note that the series above is essentially supported on positive integers $x$ satisfying the generalized Pell equation $t_1 x^2 - t_2 y^2 = \ell$. When solutions exist, they lie in finitely many classes of linear recurrences. Splitting $\langle y^\frac{1}{2} \vert g(z) \vert^2, P_\ell(\cdot, \overline{s}) \rangle$ along this subdivision, and splitting further to ignore the effect of the characters $\chi_{t_1}$ and $\chi_{t_2}$, it suffices to continue series of the form $\sum_{m \geq 1} A_m^{-s}$, where $\{A_m\}$ satisfies a degree two linear recurrence. Fortunately, such results are known; see for example~\cite{HolgadoVicente23}, which treats a much more general case. \hfill //
\end{remark}

At this point, it is straightforward to relate the inner products $\langle F, P_\ell(\cdot , \overline{s}) \rangle$ and $\langle \mathcal{V}_f, P_\ell(\cdot, \overline{s}) \rangle$. We record our results in the following proposition.

\begin{proposition} \label{prop:F-regularization}
Let $f \in H_k^\sharp(\Gamma_0(N))$ and set $F(z) = y^k \vert f(z) \vert^2$. For $k=\frac{3}{2}$, 
\begin{align} \label{eq:regularization-inner-products-3/2}
	\langle F, P_\ell(\cdot, \overline{s}) \rangle 
	& = \langle \mathcal{V}_f, P_\ell(\cdot, \overline{s}) \rangle 
	+
		\frac{\sqrt{\pi}\Gamma(s+\frac{1}{2})\Gamma(s-\frac{3}{2})}
					{(4 \pi \ell)^{s-\frac{3}{2}} \Gamma(s)}
				\sum_{\mathfrak{a} : \varkappa_\mathfrak{a} = 0}
						\vert c_\mathfrak{a}^+(0) \vert^2 \varphi_{\mathfrak{a} \ell}(\tfrac{3}{2}) \\
	& \qquad +
	 \frac{2\pi \Gamma(s-1)}{(4\pi \ell)^{s-1}} \sum_{\mathfrak{a} : \varkappa_\mathfrak{a} = 0}
			\Re\big(c_\mathfrak{a}^+(0) \overline{c_\mathfrak{a}^-(0)}\big) \varphi_{\mathfrak{a} \ell}(1) \\
	& \qquad +
	\frac{4\Gamma(s-\frac{1}{2})}{(4\pi)^{s-\frac{1}{2}}} \!\!\!
		\sum_{\substack{t_1^3, t_2^3 \mid \frac{N}{4} \\ t_i \text{ square-free}}}
		\sum_{n_1^2 t_1 = n_2^2 t_2 + \ell} \!\!\!\!
			\frac{a_{\xi f}(n_1^2t_1)\overline{a_{\xi f}(n_2^2 t_2)}}{(n_1^2t_1)^{s-\frac{1}{2}}},
\end{align}
in which $\mathcal{V}_f$ is defined as in~\eqref{eq:regularization-3/2} and $a_{\xi f}(n)$ denotes the $n$-th Fourier coefficient of $\xi_{3/2} f(z)$. For $k < \frac{1}{2}$ in $\frac{1}{2}\mathbb{Z}$, we have instead

\begin{align} \label{eq:regularization-inner-products-general}
	\langle F, P_\ell(\cdot, \overline{s}) \rangle 
	& = \langle \mathcal{V}_f, P_\ell(\cdot, \overline{s}) \rangle  \\
	 & \qquad +
				\frac{2 \pi\Gamma(s+1-k)\Gamma(s+k-2)}
							{(4 \pi \ell)^{s-\frac{1}{2}} (\pi \ell)^{k-\frac{3}{2}}\Gamma(s)\Gamma(2-k)}
				\sum_{\mathfrak{a} : \varkappa_\mathfrak{a} = 0}
						\vert c_\mathfrak{a}^-(0) \vert^2 \varphi_{\mathfrak{a} \ell}(2-k) \\
	& \qquad +
	 \frac{2\pi \Gamma(s-1)}{(4\pi \ell)^{s-1}} \sum_{\mathfrak{a} : \varkappa_\mathfrak{a} = 0}
			\Re\big(c_\mathfrak{a}^+(0) \overline{c_\mathfrak{a}^-(0)}\big) \varphi_{\mathfrak{a} \ell}(1),
\end{align}
in which $\mathcal{V}_f$ is defined as in~\eqref{eq:regularization}. 
\end{proposition}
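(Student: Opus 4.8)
The plan is to integrate the pointwise definitions of $\mathcal{V}_f$ from~\eqref{eq:regularization-3/2} (weight $k = \tfrac32$) and~\eqref{eq:regularization} (weight $k < \tfrac12$) against the Poincar\'e series $P_\ell(\cdot, \overline{s})$ and substitute the inner-product formulas already established in this section. First I would fix a right half-plane $\Re s > \sigma_0$ with $\sigma_0$ large enough that every Rankin--Selberg unfolding used below converges absolutely: the unfolding of $\langle y^k\vert f\vert^2, P_\ell(\cdot,\overline s)\rangle$ from~\S\ref{sec:unfolding} (valid for $\Re s > 1 + \vert k-1\vert$), the identity~\eqref{eq:Eisenstein-Poincare} at the relevant spectral parameter $w$ (valid for $\Re s > \tfrac12 + \vert \Re w - \tfrac12\vert$), and the theta-shift identity (valid for $\Re s > \tfrac12$); thus $\sigma_0 = \tfrac32$ works in weight $\tfrac32$ and $\sigma_0 = 2-k$ in weight $k<\tfrac12$. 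On this half-plane, the pointwise identity $F = \mathcal{V}_f + (\text{regularization terms})$ --- an identity of functions on all of $\mathfrak{h}$ with finitely many regularization terms --- may be fed through the operations ``extract the $\ell$-th Fourier coefficient at $\infty$'' and ``integrate in $y$ against $y^{s-1}e^{-2\pi\ell y}$'' term by term, since each resulting integral converges absolutely. Thus the proposition reduces to plugging in known formulas.

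The substitution and bookkeeping are as follows. For the Eisenstein contributions I would apply~\eqref{eq:Eisenstein-Poincare} with $w = \tfrac32$ in weight $\tfrac32$, so that $\Gamma(s+w-1) = \Gamma(s+\tfrac12)$ and $\Gamma(s-w) = \Gamma(s-\tfrac32)$, and with $w = 2-k$ in weight $k<\tfrac12$, so that $\Gamma(s+w-1) = \Gamma(s+1-k)$ and $\Gamma(s-w) = \Gamma(s+k-2)$; using $\Gamma(\tfrac32) = \tfrac{\sqrt\pi}{2}$ and collecting the powers of $\pi$ and $\ell$ then recovers the stated prefactors exactly, weighted over the singular cusps by $\vert c_\mathfrak{a}^+(0)\vert^2\varphi_{\mathfrak{a}\ell}(\tfrac32)$ resp.\ $\vert c_\mathfrak{a}^-(0)\vert^2\varphi_{\mathfrak{a}\ell}(2-k)$. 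For the $\widetilde{E}_\mathfrak{a}(\cdot,1)$ contributions I would use $\langle \widetilde{E}_\mathfrak{a}(\cdot,1), P_\ell(\cdot,\overline s)\rangle = \pi\varphi_{\mathfrak{a}\ell}(1)\Gamma(s-1)(4\pi\ell)^{1-s}$, weighted by $2\Re(c_\mathfrak{a}^+(0)\overline{c_\mathfrak{a}^-(0)})$. For the final term in weight $\tfrac32$, I would apply the theta-shift identity to $g = \xi_{3/2}f \in M_{1/2}(\Gamma_0(N))$: by Serre--Stark, its Fourier coefficients $a_{\xi f}(n)$ are supported on $n = m^2 t$ with $t$ square-free and $4t^3 \mid N$, so the shifted convolution of $\xi_{3/2}f$ becomes the displayed double sum over square-free $t_1, t_2$ with $t_i^3 \mid \tfrac N4$ and over $n_1^2 t_1 = n_2^2 t_2 + \ell$, the leading factor $4$ being inherited from the term $4y^{1/2}\vert\xi_{3/2}f\vert^2$ in~\eqref{eq:regularization-3/2}.

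Finally, every term on the right-hand side is meromorphic in $s$ --- the Eisenstein and $\widetilde{E}$ contributions manifestly, the theta-shift sum by Remark~\ref{rem:theta-shift-continuation}, and $\langle\mathcal{V}_f,P_\ell(\cdot,\overline s)\rangle$ via the spectral continuation developed later --- so the identity, once proved on $\Re s > \sigma_0$, propagates to all $s$ by analytic continuation. I do not expect a genuine obstacle here: the substantive points are only (i) pinning down a common half-plane of absolute convergence so that the termwise integration is legitimate, and (ii) the arithmetic of the Gamma factors and the powers of $\pi$ and $\ell$. The one step worth an explicit remark is that the $\widetilde{E}_\mathfrak{a}(\cdot,1)$ contribution is nothing but the value at $w=1$ of~\eqref{eq:Eisenstein-Poincare}: the residual term in the Laurent expansion of $E_\mathfrak{a}(\cdot,w)$ at $w=1$ is a constant function of $z$, hence is annihilated on pairing with $P_\ell(\cdot,\overline s)$ for $\ell \geq 1$ (the $x$-integral over a period vanishes), so no separate residue computation intervenes.
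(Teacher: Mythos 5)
Your proposal is correct and matches what the paper does: the proposition is stated immediately after the paper computes $\langle E_\mathfrak{a}(\cdot,w), P_\ell(\cdot,\overline s)\rangle$, $\langle \widetilde{E}_\mathfrak{a}(\cdot,1), P_\ell(\cdot,\overline s)\rangle$, and $\langle y^{1/2}\vert g\vert^2, P_\ell(\cdot,\overline s)\rangle$, and the paper explicitly defers to those formulas (``it is straightforward to relate the inner products'') rather than writing out the arithmetic. Your substitution with $w=\tfrac32$ (resp.\ $w=2-k$), the factor-of-$4$ bookkeeping for the $y^{1/2}\vert\xi_{3/2}f\vert^2$ term, the choice of half-plane $\Re s > 1 + \vert k-1\vert$, and the closing observation that $\langle\widetilde{E}_\mathfrak{a}(\cdot,1),P_\ell\rangle$ is just the $w\to1$ limit of \eqref{eq:Eisenstein-Poincare} (the residue being a constant in $z$ and hence orthogonal to $P_\ell$ for $\ell\geq1$) are all exactly the steps the paper has in mind.
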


As a corollary, we specify the contribution of correction terms in the regularization $\mathcal{V}_\mathcal{H}(z)$ of $\mathcal{H}(z)$.

\begin{corollary} \label{cor:H-non-spectral}
Let $\ell_o$ denote the odd-part of $\ell$. In $\Re s > \frac{3}{2}$, we have
\begin{align} \label{eq:H-non-spectral}
	& \langle y^{\frac{3}{2}} \vert \mathcal{H} \vert^2, P_\ell(\cdot, \overline{s}) \rangle  \\
	& \,\, = \langle \mathcal{V}_\mathcal{H}, P_\ell(\cdot, \overline{s}) \rangle 
		+ \frac{\sqrt{\pi}\Gamma(s+\frac{1}{2})\Gamma(s-\frac{3}{2})}
					{(4 \pi \ell)^{s-\frac{3}{2}} \Gamma(s)}
				\Big(\frac{2\sigma_{-2}(\frac{\ell}{4}) - \sigma_{-2}(\frac{\ell}{2})+\sigma_{-2}(\ell_o)}
									{4032\zeta(3)} \Big)  \\
	& \,\,\, - \frac{\Gamma(s-1)}{(4\pi \ell)^{s-1}}
				\Big(\frac{2\sigma_{-1}(\frac{\ell}{4}) - \sigma_{-1}(\frac{\ell}{2})
							+\sigma_{-1}(\ell_o)}{288\zeta(2)} \Big) 
	+ \frac{\Gamma(s-\frac{1}{2})}{32 \pi^{s+\frac{3}{2}}}
		 \hspace{-4.5mm} \sum_{\substack{d \mid \ell \\ d \equiv \frac{\ell}{d} \bmod 2}} \hspace{-3.5mm}
			(d+\tfrac{\ell}{d})^{1-2s}.
\end{align}
\end{corollary}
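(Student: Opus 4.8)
The plan is to derive Corollary~\ref{cor:H-non-spectral} by specializing Proposition~\ref{prop:F-regularization}, in its weight $k=\tfrac32$ form~\eqref{eq:regularization-inner-products-3/2}, to $f=\mathcal H$ and $N=4$, and then evaluating the three explicit correction terms using the cusp data of Proposition~\ref{prop:H-cusp-behavior} together with the Fourier expansions~\eqref{eq:Fourier-Eisenstein} of the weight-$0$ Eisenstein series on $\Gamma_0(4)$. By Proposition~\ref{prop:H-cusp-behavior} the only singular cusps are $\mathfrak a=\infty$ and $\mathfrak a=0$, with $c_\infty^+(0)=-\tfrac1{12}$, $c_0^+(0)=\tfrac1{24}$, $c_\infty^-(0)=\tfrac1{8\pi}$, $c_0^-(0)=-\tfrac1{8\pi}$, so that $\vert c_\infty^+(0)\vert^2=\tfrac1{144}$, $\vert c_0^+(0)\vert^2=\tfrac1{576}$, $\Re\big(c_\infty^+(0)\overline{c_\infty^-(0)}\big)=-\tfrac1{96\pi}$, and $\Re\big(c_0^+(0)\overline{c_0^-(0)}\big)=-\tfrac1{192\pi}$; and since $N/4=1$, the double sum over square-free $t_1,t_2$ in~\eqref{eq:regularization-inner-products-3/2} collapses to $t_1=t_2=1$. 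With these substitutions the claimed identity becomes a finite list of elementary statements, to be verified on $\Re s>\tfrac32$ where every series is absolutely convergent.

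For the shadow term I would first compute $\xi_{3/2}\mathcal H(z)=-\tfrac1{16\pi}\,\theta(z)$ by combining~\eqref{eq:shadow-definition} with the Fourier expansion~\eqref{eq:H-definition}; this also re-derives the $-\tfrac1{64\pi^2}y^{1/2}\vert\theta\vert^2$ term of $\mathcal V_{\mathcal H}$. Hence $a_{\xi\mathcal H}(n)=-\tfrac1{16\pi}r_1(n)$, and the last sum in~\eqref{eq:regularization-inner-products-3/2} is a constant times $\sum_{n\geq0}\frac{r_1(n+\ell)r_1(n)}{(n+\ell)^{s-1/2}}$. Writing $n=b^2$, $n+\ell=a^2$ and factoring $\ell=(a-b)(a+b)$, the solutions are parametrized by divisors $d\mid\ell$ with $d\equiv\ell/d\bmod2$ via $a=\tfrac12(d+\tfrac\ell d)$; the only delicate point is that the term with $b=0$ (present exactly when $\ell$ is a perfect square) carries multiplicity $r_1(0)=1$ rather than $2$, but after passing from ordered pairs to the symmetric divisor sum this deficit exactly cancels the overcount of the diagonal term, leaving no exceptional contribution. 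Collecting powers of $2$ then yields precisely $\tfrac{\Gamma(s-1/2)}{32\pi^{s+3/2}}\sum_{d\mid\ell,\ d\equiv\ell/d\bmod2}(d+\tfrac\ell d)^{1-2s}$.

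It remains to match the two Eisenstein contributions. Inserting the constants above, the $\widetilde E_{\mathfrak a}(\cdot,1)$ term equals $-\tfrac{\Gamma(s-1)}{(4\pi\ell)^{s-1}}\big(\tfrac1{48}\varphi_{\infty\ell}(1)+\tfrac1{96}\varphi_{0\ell}(1)\big)$ and the $E_{\mathfrak a}(\cdot,\tfrac32)$ term equals $\tfrac{\sqrt\pi\,\Gamma(s+1/2)\Gamma(s-3/2)}{(4\pi\ell)^{s-3/2}\Gamma(s)}\big(\tfrac1{144}\varphi_{\infty\ell}(\tfrac32)+\tfrac1{576}\varphi_{0\ell}(\tfrac32)\big)$, where $\varphi_{\mathfrak a\ell}(w)$ is as in~\eqref{eq:Fourier-Eisenstein} and $\varphi_{\mathfrak a\ell}(1)$ is its value at $w=1$, which is regular since $\ell\neq0$. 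So it suffices to establish the two closed forms $2\varphi_{\infty\ell}(1)+\varphi_{0\ell}(1)=\tfrac1{3\zeta(2)}\big(2\sigma_{-1}(\tfrac\ell4)-\sigma_{-1}(\tfrac\ell2)+\sigma_{-1}(\ell_o)\big)$ and $4\varphi_{\infty\ell}(\tfrac32)+\varphi_{0\ell}(\tfrac32)=\tfrac1{7\zeta(3)}\big(2\sigma_{-2}(\tfrac\ell4)-\sigma_{-2}(\tfrac\ell2)+\sigma_{-2}(\ell_o)\big)$, here $\zeta(2w)$ specializing to $\zeta(3)$ and $\zeta(2)$ at $w=\tfrac32$ and $w=1$. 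For this I would expand $E_\infty(z,w)$ and $E_0(z,w)$ over their defining lattices, read off $\varphi_{\infty\ell}(w)$ and $\varphi_{0\ell}(w)$ as Dirichlet series in $\ell$ of Ramanujan sums $c_c(\ell)$ with $c$ running over a fixed arithmetic progression of moduli, evaluate these via the multiplicativity of $c_c(\ell)$, and finally reorganize the Euler factor at $2$ — which is where the asymmetric combination $2\sigma_\nu(\tfrac\ell4)-\sigma_\nu(\tfrac\ell2)+\sigma_\nu(\ell_o)$ arises, the three summands tracking the three dyadic cases $4\mid\ell$, $\ell\equiv2\bmod4$, and $\ell$ odd.

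The main obstacle is this last step: the $\Gamma_0(4)$ Eisenstein coefficients must be computed with enough precision to pin down both the arithmetic (the exact dyadic combination of divisor sums) and the numerical constants $\tfrac1{4032\zeta(3)}$ and $\tfrac1{288\zeta(2)}$, $\zeta$-normalizations included. By comparison, the shadow term and the reductions using Proposition~\ref{prop:H-cusp-behavior} are routine, the only subtle point there being the cancellation of the perfect-square exceptional contributions described above.
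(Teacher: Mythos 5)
Your proposal is correct and follows essentially the same route as the paper: specialize Proposition~\ref{prop:F-regularization} to $f=\mathcal H$, $N=4$, plug in the cusp constants from Proposition~\ref{prop:H-cusp-behavior}, compute the shadow $\xi_{3/2}\mathcal H = -\tfrac{1}{16\pi}\theta$, rewrite the $r_1\ast r_1$ sum as the symmetric divisor sum $2^{2s}\sum_{d\mid\ell,\,d\equiv\ell/d}(d+\ell/d)^{1-2s}$, and evaluate the Eisenstein contributions through explicit formulas for $\varphi_{\mathfrak a\ell}(w)$. All of your intermediate numerical checks are right: the constants $-\tfrac1{96\pi}$, $-\tfrac1{192\pi}$, the identities $4\varphi_{\infty\ell}(\tfrac32)+\varphi_{0\ell}(\tfrac32)=\tfrac1{7\zeta(3)}(2\sigma_{-2}(\tfrac\ell4)-\sigma_{-2}(\tfrac\ell2)+\sigma_{-2}(\ell_o))$ and $2\varphi_{\infty\ell}(1)+\varphi_{0\ell}(1)=\tfrac1{3\zeta(2)}(\cdots)$, and your argument that the $b=0$ and $d=\sqrt\ell$ multiplicities match up. The one difference from the paper is in where the Eisenstein coefficient formulas come from: the paper cites closed forms for $\varphi_{\infty\ell}(w)$ and $\varphi_{0\ell}(w)$ from \cite[\S{3.3}]{HKLDWSphere} (reproduced as~\eqref{eq:phi-formulas}), whereas you propose to rederive them directly from the lattice/Ramanujan-sum expansion. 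That is a legitimate and standard computation, but it is the only part of the argument that is not an immediate bookkeeping step, and citing the known formula (as the paper does) is the more economical route; you would need to carry out the Euler-factor-at-$2$ bookkeeping carefully to land on the exact combination $2\sigma_\nu(\tfrac\ell4)-\sigma_\nu(\tfrac\ell2)+\sigma_\nu(\ell_o)$, which you correctly identify as the crux.
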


\begin{proof}
Since $\xi_{3/2} \mathcal{H}(z) = - \frac{1}{16\pi} \theta(z)$ by~\eqref{eq:shadow-definition} and the Fourier expansion~\eqref{eq:H-definition}, we have $a_{\xi \mathcal{H}}(n) = - \frac{1}{16\pi} r_1(n)$. Propositions~\ref{prop:H-cusp-behavior} and~\ref{prop:F-regularization} then give
\begin{align} \label{eq:H-non-spectral-unsimplified}
	& \langle y^{\frac{3}{2}} \vert \mathcal{H} \vert^2, P_\ell(\cdot, \overline{s}) \rangle  \\
	&\quad = \langle \mathcal{V}_\mathcal{H}, P_\ell(\cdot, \overline{s}) \rangle  
	+ 
			\frac{\sqrt{\pi}\, \Gamma(s+\frac{1}{2})\Gamma(s-\frac{3}{2})}
					{(4 \pi \ell)^{s-\frac{3}{2}} \Gamma(s)}
				\Big( \tfrac{1}{144} \varphi_{\infty \ell}(\tfrac{3}{2}) 
					+ \tfrac{1}{576} \varphi_{0 \ell}(\tfrac{3}{2}) \Big) \\
	& \qquad -
	 \frac{\Gamma(s-1)}{(4\pi \ell)^{s-1}}
				\Big(\tfrac{1}{48} \varphi_{\infty \ell}(1) 
					+ \tfrac{1}{96} \varphi_{0 \ell}(1) \Big) 
		+
	\frac{\Gamma(s-\frac{1}{2})}{4(4\pi)^{s+\frac{3}{2}}}
		\sum_{n \geq 0}	\frac{r_1(n+\ell)r_1(n)}{(n+\ell)^{s-\frac{1}{2}}}.
\end{align}

To simplify further, we give explicit descriptions of the Fourier coefficients $\varphi_{\mathfrak{a}\ell}(w)$. Conveniently, the formulas we require appear in~\cite[\S{3.3}]{HKLDWSphere}:
\begin{align}
 \label{eq:phi-formulas}
	\varphi_{0 \ell}(w) &= \frac{\sigma_{1-2w}^{(2)}(\ell)}{4^w \zeta^{(2)}(2w)},
	\,\,\,
	\varphi_{\infty \ell}(w) 
		= \frac{2^{2-4w} \sigma_{1-2w}(\frac{\ell}{4}) - 2^{1-4w} \sigma_{1-2w}(\frac{\ell}{2})}
				{\zeta^{(2)}(2w)},
				\qquad
\end{align}
in which $\zeta^{(2)}(s) = (1-2^{-s})\zeta(s)$, $\sigma_\nu^{(2)}$ denotes the sum-of-divisors function with its $2$-factor removed, and we adopt the convention that $\sigma_\nu(m) = 0$ for $m \notin \mathbb{Z}$. By Euler products, $\sigma_\nu^{(2)}(\ell) = \sigma_\nu(\ell_o)$, where $\ell_0$ is the odd part of $\ell$.

Finally, we note that the series in~\eqref{eq:H-non-spectral-unsimplified} may be written as a divisor sum:
\begin{align*}
	& \sum_{n \geq 0}	\frac{r_1(n+\ell)r_1(n)}{(n+\ell)^{s-\frac{1}{2}}}
		= \sum_{\substack{n_1,n_2 \in \mathbb{Z} \\ n_2^2 - n_1^2 = \ell}} \vert n_2 \vert^{1-2s} \\
	&	\qquad 
	= 2^{2s-1} \!\!\! \sum_{\substack{d_1,d_2 \in \mathbb{Z} \\ d_1 d_2 = \ell \\ d_1 \equiv d_2 \bmod 2}} \!\!\!
		\vert d_1 + d_2 \vert^{1-2s}
	= 2^{2s} \!\!\! \sum_{\substack{d \mid \ell \\ d \equiv \frac{\ell}{d} \bmod 2}} \!\!\!
			(d+\tfrac{\ell}{d})^{1-2s},
\end{align*}
which completes the proof.
\end{proof}


\section{Spectral Expansion and Rightmost Poles} \label{sec:spectral-expansion}

As before, fix $f \in H_k^\sharp(\Gamma_0(N))$ with $k \notin \{\frac{1}{2}, 1\}$ and define $F(z) = y^k \vert f(z) \vert^2$. In this section, we show that the inner product $\langle F, P_\ell(\cdot, \overline{s}) \rangle$ admits meromorphic continuation to $s \in \mathbb{C}$. By Proposition~\ref{prop:F-regularization}, it suffices to consider the inner products $\langle \mathcal{V}_f, P_\ell(\cdot , \overline{s}) \rangle$ instead, as the regularization terms contribute explicit terms which are meromorphic in $\mathbb{C}$, either by inspection or as a consequence of Remark~\ref{rem:theta-shift-continuation}.

Selberg's spectral theorem (cf.~\cite[Theorem 15.5]{IwaniecKowalski04}) gives the following spectral expansion of $P_\ell(z,s)$:
\begin{align} \label{eq:spectral-expansion-template}
	P_\ell(z,s) &= \sum_j \langle P_\ell(\cdot ,s ), \mu_j \rangle \mu_j(z) \\
	& \qquad + \sum_{\mathfrak{a}} \frac{V_N}{4\pi}
		\int_{-\infty}^\infty \langle P_\ell(\cdot, s) ,E_\mathfrak{a}(\cdot, \tfrac{1}{2}+it) \rangle 
			E_\mathfrak{a}(z, \tfrac{1}{2}+it) dt,
\end{align}
in which $\mathfrak{a}$ varies through the cusps of $\Gamma_0(N)$, $V_N = \frac{\pi}{3} \cdot N \prod_{p \mid N}(1+1/p)$ denotes the volume of $\Gamma_0(N) \backslash \mathfrak{h}$, and $\{\mu_j\}$ is an orthonormal Hecke-eigenbasis for the space of weight $0$ Maass cusp forms on $\Gamma_0(N)$. These Maass forms have Fourier expansions at all cusps, which we write in the form
\begin{align} \label{eq:Maass-form-Fourier-expansion}
	\mu_{j\mathfrak{a}}(z) 
	:= \mu_j \vert_{\sigma_{\mathfrak{a}}}(z)
	= y^{\frac{1}{2}} \sum_{ n \neq 0} \rho_{j\mathfrak{a}}(n) K_{it_j}(2\pi \vert n \vert y) e(nx).
\end{align}

Before proceeding, we record two useful lemmas regarding the growth of the coefficients $\rho_{j\mathfrak{a}}(m)$ on average. The first of these concerns the average growth of $\rho_{j\mathfrak{a}}(m)$ with respect to $m$ and is taken from~\cite{Iwaniec02}.

\begin{lemma}[{\cite[(8.7)]{Iwaniec02}}] \label{lem:Maass-form-coefficient-average}
Let $\mu_j$ be an $L^2$-normalized Maass cusp form on $\Gamma_0(N)$ with Fourier expansion at $\mathfrak{a}$ of the form~\eqref{eq:Maass-form-Fourier-expansion}. Then
\[
	\sum_{m \leq M} \vert \rho_{j \mathfrak{a}}(m) \vert^2
	\ll_{N} (M + \vert t_j \vert) e^{\pi \vert t_j \vert}.
\]
\end{lemma}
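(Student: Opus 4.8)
The plan is to obtain this from the Rankin--Selberg method, unfolding $\mu_j$ against the weight-zero Eisenstein series $E_\mathfrak{a}(z,s)$ at the cusp $\mathfrak{a}$. First I would carry out the unfolding: Parseval in the $x$-variable on the horocycle at height $y$, using the Fourier expansion~\eqref{eq:Maass-form-Fourier-expansion}, gives
\[
	\int_0^1 \vert \mu_{j\mathfrak{a}}(x+iy) \vert^2 \, dx
		= y \sum_{m \neq 0} \vert \rho_{j\mathfrak{a}}(m) \vert^2 K_{it_j}(2\pi \vert m \vert y)^2 ,
\]
and integrating against $E_\mathfrak{a}(\cdot,\overline{s})$ and unfolding as in~\eqref{eq:Poincare-unfolding} (with $E_\mathfrak{a}$ in place of the Poincar\'e series) produces
\[
	\langle \vert \mu_j \vert^2, E_\mathfrak{a}(\cdot,\overline{s}) \rangle
		= c(s)\, \vert \Gamma(\tfrac{s}{2}+it_j) \vert^2 \, L_{j\mathfrak{a}}(s),
	\qquad
	L_{j\mathfrak{a}}(s) := \sum_{m \neq 0} \frac{\vert \rho_{j\mathfrak{a}}(m) \vert^2}{\vert m \vert^{s}},
\]
where $c(s)$ is an explicit ratio of $\Gamma$-functions and powers of $\pi$ and $2$ coming from the Mellin transform $\int_0^\infty v^{s-1} K_{it_j}(v)^2\, dv$ (evaluated via~\cite[6.576(4)]{GradshteynRyzhik07}), analytic and nonvanishing in a neighborhood of $s=1$.

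Next I would extract the pole. The Eisenstein series $E_\mathfrak{a}(z,s)$ has a simple pole at $s=1$ with residue $V_N^{-1}$ (the constant function), so the left-hand side above has a simple pole at $s=1$ with residue $V_N^{-1}\langle \vert \mu_j \vert^2, 1\rangle = V_N^{-1}$, since $\mu_j$ is $L^2$-normalized. Comparing residues, and using $\vert \Gamma(\tfrac{1}{2}+it_j)\vert^2 = \pi/\cosh(\pi t_j)$, it follows that $L_{j\mathfrak{a}}(s)$ is holomorphic in $\Re s > 1$ and has a simple pole at $s=1$ with residue $\asymp_N \cosh(\pi t_j) \asymp e^{\pi \vert t_j \vert}$. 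From here the partial-sum bound follows by a Landau-type Tauberian argument, available because the Dirichlet coefficients $\vert \rho_{j\mathfrak{a}}(m)\vert^2$ are nonnegative: the elementary inequality $\sum_{\vert m \vert \leq M} \vert \rho_{j\mathfrak{a}}(m)\vert^2 \leq M^\sigma L_{j\mathfrak{a}}(\sigma)$ for $\sigma > 1$, together with the residue estimate (the case $\vert t_j \vert = O_N(1)$ being trivial), yields the claim up to a harmless logarithmic factor once one controls $L_{j\mathfrak{a}}(\sigma)$ uniformly in $t_j$ for $\sigma$ slightly larger than $1$; the logarithm is removed by running the Tauberian step with a smooth weight, or by bounding the dyadic pieces $\sum_{M < \vert m \vert \leq 2M}$ directly from the Rankin--Selberg density.

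The main obstacle is exactly this uniformity in $t_j$: one needs $\langle \vert \mu_j \vert^2, E_\mathfrak{a}(\cdot,\sigma) \rangle \ll_N (\sigma-1)^{-1}$ uniformly for $\sigma \in (1,2]$, i.e.\ uniform control of the contribution of the cuspidal zones (a naive bound by $\|\mu_j\|_\infty^2$ times the hyperbolic area of a low horocyclic strip loses powers of $\vert t_j \vert$, and a direct estimate of $\int_{Y_0}^\infty y^{\sigma-1}\!\int_0^1 \vert \mu_{j\mathfrak{b}}\vert^2\,dx\,dy$ is circular). The resolution is the Maass--Selberg relation: one truncates $E_\mathfrak{a}(z,\sigma)$ at a height depending only on $N$, so that the truncated part is bounded on $\Gamma_0(N)\backslash \mathfrak h$ and contributes $\ll_N \|\mu_j\|^2 = 1$, while the remaining constant-term tails at each cusp (whose coefficients are $\ll_N (\sigma-1)^{-1}$ by~\eqref{eq:Fourier-Eisenstein}) are estimated against the rapid decay of the cusp form $\mu_j$, with the Maass--Selberg identity supplying the needed exact expression. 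This computation is carried out in full in~\cite{Iwaniec02}; since~\eqref{eq:Maass-form-coefficient-average} is only invoked below as an off-the-shelf input, I would not reproduce it here. (An alternative route via horocyclic Parseval at height $y \asymp (M+\vert t_j\vert)^{-1}$ combined with the below-the-turning-point asymptotics for $K_{it_j}$ gives the bound cleanly in the range $M \ll \vert t_j \vert$, but loses polynomial factors when $M \gg \vert t_j \vert$, which is precisely the range where the Rankin--Selberg main term $r_{j\mathfrak{a}}\cdot M$ must be invoked.)
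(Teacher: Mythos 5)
The paper does not prove this lemma; it is imported verbatim as~\cite[(8.7)]{Iwaniac02} (the level-$N$ variant follows the same argument), so there is no in-paper proof to compare against. Your sketch is a reasonable account of the underlying mechanism: Rankin--Selberg unfolding against $E_\mathfrak{a}(\cdot,s)$, the residue $\Res_{s=1} L_{j\mathfrak{a}}(s) \asymp \cosh\pi t_j$ coming from $\vert\Gamma(\tfrac12+it_j)\vert^2 = \pi/\cosh\pi t_j$ and the $L^2$-normalization, and the recognition that uniformity in $t_j$ is the crux and is supplied by the Maass--Selberg relation. Deferring the details to Iwaniec, as you do, is appropriate since the paper does the same.

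Two small corrections to the commentary. First, the naive Tauberian step $\sum_{m\leq M}\vert\rho_{j\mathfrak{a}}(m)\vert^2 \leq M^\sigma L_{j\mathfrak{a}}(\sigma)$ with $\sigma = 1 + 1/\log M$ only produces the $M\,e^{\pi\vert t_j\vert}$ piece (and only after the uniformity input); to see the $\vert t_j\vert\,e^{\pi\vert t_j\vert}$ piece that dominates for $M \ll \vert t_j\vert$ one really needs the Maass--Selberg bookkeeping, not just a residue count, so the phrase ``harmless logarithmic factor'' undersells what the Maass--Selberg step is actually doing. Second, your parenthetical dismissal of the direct horocyclic route is too hasty: taking the horocycle height $y \asymp \vert t_j\vert/M$ rather than $y \asymp (M+\vert t_j\vert)^{-1}$ in the range $M \gg \vert t_j\vert$ keeps every $m\leq M$ safely below the Bessel turning point while the covering bound gives $\int_Y^{2Y}\int_0^1 \vert\mu_{j\mathfrak{a}}\vert^2\,dx\,dy/y^2 \ll 1/Y \asymp M/\vert t_j\vert$, which exactly cancels the $\vert t_j\vert^{-1}$ from the averaged $K_{it_j}^2$ and yields $\ll M\,e^{\pi\vert t_j\vert}$ with no loss. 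That direct route, not the Tauberian one, is closest to how Iwaniec actually obtains (8.7); either way the cited result stands and the lemma is used correctly downstream.
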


Our second lemma is a spectral average generalizing~\cite[Theorem 6]{Kuznetsov80}.

\begin{lemma} \label{lem:Kuznetsov-long} 
Let $\{\mu_j\}$ denote an orthonormal basis of Maass cusp forms for $\Gamma_0(N)$, with Fourier expansions given by~\eqref{eq:Maass-form-Fourier-expansion}. For $\ell > 0$ and any $\epsilon > 0$,
\[
	\sum_{\vert t_j \vert \leq X} 
	\frac{\vert \rho_j(\ell) \vert^2}{\cosh \pi t_j }
	= \frac{X^2}{\pi^2} + O_{N,\epsilon}(X \log X + X \ell^\epsilon + \ell^{\frac{1}{2}+\epsilon}).
\]
\end{lemma}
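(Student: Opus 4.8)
The plan is to deduce Lemma~\ref{lem:Kuznetsov-long} from the Kuznetsov trace formula for $\Gamma_0(N)$, applied to the diagonal pair of Fourier coefficients $m = n = \ell$ at the cusp $\infty$; this follows Kuznetsov's proof of his Theorem~6 in~\cite{Kuznetsov80}, the new features being the passage from $\Gamma_0(1)$ to $\Gamma_0(N)$ (routine) and, more substantially, the uniformity in the shift~$\ell$. For an admissible even test function $h$, the formula has the shape
\[
	\sum_j \frac{\vert \rho_j(\ell) \vert^2}{\cosh \pi t_j}\, h(t_j)
		+ \Sigma_{\mathrm{Eis}}(h)
	= \frac{1}{\pi^2} \int_{-\infty}^\infty t \tanh(\pi t)\, h(t)\, dt
		+ \sum_{\substack{c > 0 \\ N \mid c}} \frac{S(\ell,\ell;c)}{c}\, \phi_h\!\Big( \frac{4\pi \ell}{c} \Big),
\]
in which $S(\ell,\ell;c)$ denotes the Kloosterman sum, $\phi_h$ is the Bessel transform of $h$, and $\Sigma_{\mathrm{Eis}}(h)$ is the continuous-spectrum counterpart of the cuspidal sum---a sum over the cusps $\mathfrak b$ of $\Gamma_0(N)$ of integrals in $t$ built from $h$ and the Fourier coefficients $\varphi_{\mathfrak b \ell}(\tfrac12 + it)$ of the Eisenstein series $E_{\mathfrak b}(\cdot, \tfrac12+it)$ from~\eqref{eq:Fourier-Eisenstein}. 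The structural point is that, because $m = n$, the diagonal main term on the right is independent of~$\ell$; after solving for the cuspidal sum, the lemma reduces to choosing $h$ to localize to $\vert t \vert \le X$ and bounding the three remaining terms.

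For the test function I would sandwich $\mathbf{1}_{[-X,X]}$ between even smooth majorant and minorant functions $h^\pm$ of the kind used in~\cite{Kuznetsov80}, chosen so that the main-term integral, its error, and the transforms $\phi_{h^\pm}$ are all adequately controlled. Evaluating the diagonal integral and bounding the continuous-spectrum term---for the latter using the explicit formulas~\eqref{eq:phi-formulas} for $\varphi_{\mathfrak b \ell}(\tfrac12+it)$, which are $\sigma$-type divisor sums over $\ell$ divided by $\zeta^{(2)}(1+2it)$, together with the classical lower bound $\vert\zeta(1+2it)\vert \gg (\log(2+\vert t\vert))^{-1}$, the mean-value bound $\int_{-X}^X \vert\zeta(1+2it)\vert^{-2}\,dt \ll X$, and $\vert\sigma_{2it}(\ell)\vert \le d(\ell) \ll_\epsilon \ell^\epsilon$---one finds that the diagonal and continuous contributions together equal $\tfrac{X^2}{\pi^2} + O_{N,\epsilon}(X\log X + \ell^\epsilon X)$. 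The majorant and minorant choices differ by an amount of the same order, which is what justifies the sharp cutoff.

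The main obstacle is the Kloosterman term, where the shift $\ell$ genuinely enters. I would first record the behavior of the transform: for the chosen $h^\pm$ one has $\phi_{h^\pm}(x) \ll \min(1, x)$ for $x \ll X$ and $\phi_{h^\pm}(x) \ll_A (X/x)^A$ for $x \gg X$, obtained from uniform estimates for $J_{2it}(x)$ (stationary phase with stationary point near $t = x/4$, together with the Taylor expansion of $J_{2it}$ at $x = 0$). Writing $x_c = 4\pi\ell/c$: the range $c \ll \ell/X$ (where $x_c \gg X$) contributes negligibly by rapid decay; for $\ell/X \ll c \ll \ell$, where $1 \ll x_c \ll X$, we use $\phi_{h^\pm}(x_c) \ll 1$ together with Weil's bound $\vert S(\ell,\ell;c)\vert \le d(c)(\ell,c)^{1/2}c^{1/2}$ and a dyadic decomposition to obtain
\[
	\sum_{\ell/X \ll c \ll \ell} \frac{\vert S(\ell,\ell;c)\vert}{c}
	\ll \sum_{c \ll \ell} c^{-1/2+\epsilon}(\ell,c)^{1/2}
	\ll \ell^{1/2+\epsilon};
\]
and for $c \gg \ell$ we use $\phi_{h^\pm}(x_c) \ll x_c \ll \ell/c$ with the same bound to get $\ell\sum_{c\gg\ell}c^{-3/2+\epsilon}(\ell,c)^{1/2} \ll \ell^{1/2+\epsilon}$ (both estimates resting on $\sum_{c\asymp C}(\ell,c)^{1/2}\ll C\ell^\epsilon$). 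Thus the Kloosterman term is $O_{N,\epsilon}(\ell^{1/2+\epsilon})$.

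Collecting the diagonal, continuous, and Kloosterman contributions and comparing the majorant with the minorant yields
\[
	\sum_{\vert t_j\vert\le X} \frac{\vert\rho_j(\ell)\vert^2}{\cosh\pi t_j}
	= \frac{X^2}{\pi^2} + O_{N,\epsilon}\big(X\log X + X\ell^\epsilon + \ell^{1/2+\epsilon}\big),
\]
as desired. I expect the delicate point to be keeping the bounds for $\phi_{h^\pm}$---and hence for the Kloosterman sum---uniform simultaneously in $X$ and $\ell$: the short range $c \ll \ell$, which is empty when $\ell = 1$, is exactly where the transform argument $x_c$ is $\gg 1$ and $\phi_{h^\pm}$ does not decay, so there one must rely on Weil's bound in place of any decay of the transform.
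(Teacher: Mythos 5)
Your proposal follows essentially the same route as the paper: both adapt Kuznetsov's proof of his Theorem~6 to level~$N$ by invoking the Kuznetsov/Deshouillers--Iwaniec trace formula on $\Gamma_0(N)$ with a test function localizing to $\vert t\vert \le X$, the Weil-type bound $S_{\infty\infty}(\ell,\ell;c) \ll (\ell,c)^{1/2} c^{1/2} d(c)$ for the Kloosterman term, and the Eisenstein coefficient bound $\varphi_{\mathfrak{a}\ell}(\tfrac12+it) \ll d(\ell)/\vert\zeta(1+2it)\vert$ for the continuous spectrum. You spell out the dyadic treatment of the Kloosterman sum and the transform asymptotics that the paper leaves implicit in its citation to Kuznetsov's \S6, but the decomposition, the key lemmas, and the shape of the final error are identical.
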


\begin{proof} For level $N=1$, this result is~\cite[Theorem 6]{Kuznetsov80}. More generally, we adapt~\cite[\S{6}]{Kuznetsov80}, replacing the level $1$ trace formula with one on $\Gamma_0(N)$, as found in~\cite[Lemma 4.7]{DeshouillersIwaniec82}, e.g. To carry out this generalization, we require the Kloosterman sum estimate $S_{\infty\infty}(\ell,\ell,c) \ll (\ell,c)^{1/2} c^{1/2} d(c)$ from~\cite[Lemma 2.6]{DeshouillersIwaniec82} as well as the Eisenstein series coefficient estimate
\begin{align} \label{eq:Eisenstein-Fourier-coefficient-bound}
	\varphi_{\mathfrak{a} \ell}(\tfrac{1}{2}+it)
	\ll_{N} \frac{d(\ell)}{\vert \zeta(1+2it) \vert}
	\ll_{N,\epsilon} d(\ell) \log t,
\end{align}
To see~\eqref{eq:Eisenstein-Fourier-coefficient-bound}, one may represent $E_\mathfrak{a}(z,s)$ in terms of Eisenstein series attached to characters via~\cite[Theorem 6.1]{Young19} and apply the Fourier coefficient formulas in~\cite[Proposition 4.1]{Young19}, then apply~\cite[(3.11.10)]{Titchmarsh86}.
\end{proof}

Continuing on, substitution of~\eqref{eq:spectral-expansion-template} into $\langle \mathcal{V}_f, P_h(\cdot, \overline{s}) \rangle$ produces 
\begin{align} \label{eq:spectral-expansion-unsimplified}
	\langle \mathcal{V}_f, P_\ell(\cdot , \overline{s}) \rangle
	 &= \sum_j \langle \mu_j, P_\ell(\cdot , \overline{s})\rangle \langle \mathcal{V}_f, \mu_j(z) \rangle \\
	& \quad 
		+ \frac{V_N}{4\pi} \sum_\mathfrak{a}
		\int_{-\infty}^\infty
			\langle E_\mathfrak{a}(\cdot, \tfrac{1}{2}+it),  P_\ell(\cdot, \overline{s})\rangle
			\langle \mathcal{V}_f, E_\mathfrak{a}(z, \tfrac{1}{2}+it) \rangle dt,
\end{align}
which we call the \emph{spectral expansion of $\langle \mathcal{V}_f, P_\ell(\cdot , \overline{s}) \rangle$}. We will refer to the terms at right in~\eqref{eq:spectral-expansion-unsimplified} as the \emph{discrete spectrum} and \emph{continuous spectrum}, respectively. To make this more explicit, we apply~\eqref{eq:Eisenstein-Poincare} and the formula
\[
	\langle \mu_j , P_\ell(\cdot, \overline{s}) \rangle
		= \frac{\rho_j(\ell) \sqrt{\pi}}{(4\pi \ell)^{s-\frac{1}{2}}}
			\frac{\Gamma(s-\frac{1}{2}-it_j) \Gamma(s-\frac{1}{2}+it_j)}{\Gamma(s)},
\]
which follows from~\cite[6.621(3)]{GradshteynRyzhik07}. We conclude that $\langle \mathcal{V}_f, P_\ell(\cdot, \overline{s}) \rangle$ admits a spectral decomposition of the form $\Sigma_{\mathrm{disc}}(s) + \Sigma_{\mathrm{cont}}(s)$, in which
\begin{align*}
	\Sigma_{\mathrm{disc}}(s) 
		& :=  \frac{\sqrt{\pi}}{(4\pi \ell)^{s-\frac{1}{2}} \Gamma(s)}
		\sum_j \rho_j(\ell) \Gamma(s - \tfrac{1}{2} + it_j)\Gamma(s- \tfrac{1}{2} -it_j)
			\langle \mathcal{V}_f, \mu_j \rangle; \\
	\Sigma_{\mathrm{cont}}(s)
		& := \frac{V_N}{2} \sum_\mathfrak{a}
		\int_{-\infty}^\infty 
			\frac{\varphi_{\mathfrak{a} \ell}(\frac{1}{2}+it) \Gamma(s-\frac{1}{2}+it)\Gamma(s-\frac{1}{2}-it)}
				{(4\pi \ell)^{s-\frac{1}{2}} (\pi \ell)^{-it} \Gamma(s)\Gamma(\frac{1}{2}+it)} \\
		& \hspace{65 mm} \times
			\langle \mathcal{V}_f, E_\mathfrak{a}(\cdot, \tfrac{1}{2}+it) \rangle dt.
\end{align*}

This spectral expansion is initially defined for $\Re s > 1+ \vert k - 1 \vert$, provided all expressions converge. Fortunately, convergence is not an issue: 

\begin{lemma} The functions $\Sigma_{\mathrm{disc}}(s)$ and $\Sigma_{\mathrm{cont}}(s)$ converge for all $s \in \mathbb{C}$ away from their poles.
\end{lemma}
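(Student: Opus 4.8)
The plan is to prove convergence of $\Sigma_{\mathrm{disc}}(s)$ and $\Sigma_{\mathrm{cont}}(s)$ separately, using the decay of the relevant inner products against the rapidly decaying Gamma-factors. I would fix $s$ in a compact set away from the poles and show absolute convergence uniformly on such sets, which also yields holomorphy.

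For the discrete spectrum, the key inputs are: (i) the triple-product bound from Theorem~\ref{thm:intro-inner-product-bound}, which gives $\langle \mathcal{V}_f, \mu_j \rangle \ll_\epsilon (|t_j|^{2k-1} + |t_j|^{3-2k}) |t_j|^\epsilon e^{-\frac{\pi}{2}|t_j|}$ (after noting that $\langle \mathcal{V}_f, \mu_j\rangle = \langle F, \mu_j\rangle$ since the Eisenstein and $\widetilde{E}$ regularization terms are orthogonal to cusp forms, and $\langle y^{k}|f|^2,\mu_j\rangle$ is what the theorem bounds); (ii) the Cauchy--Schwarz or Rankin--Selberg type bound $\rho_j(\ell) \ll_{\ell,\epsilon} |t_j|^\epsilon e^{\frac{\pi}{2}|t_j|}$ coming from Lemma~\ref{lem:Maass-form-coefficient-average} applied with $M=\ell$; and (iii) Stirling's formula, which gives $|\Gamma(s-\tfrac12+it_j)\Gamma(s-\tfrac12-it_j)| \ll_s |t_j|^{2\Re s - 2} e^{-\pi |t_j|}$ for $|t_j|$ large. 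Multiplying these together, the exponential factors combine to $e^{\frac{\pi}{2}|t_j|} \cdot e^{-\frac{\pi}{2}|t_j|} \cdot e^{-\pi|t_j|} = e^{-\pi|t_j|}$, which decays exponentially and dominates all the polynomial factors in $|t_j|$. Since the Weyl law gives $\#\{j : |t_j| \leq T\} \ll_N T^2$, the sum over $j$ converges absolutely and locally uniformly in $s$, hence defines an entire function of $s$.

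For the continuous spectrum, the argument is parallel. Here I would bound $\langle \mathcal{V}_f, E_\mathfrak{a}(\cdot, \tfrac12+it)\rangle$ using a version of Theorem~\ref{thm:intro-inner-product-bound} adapted to the Eisenstein case (or directly via an unfolding/Rankin--Selberg computation, since $\mathcal{V}_f$ differs from $F$ by Eisenstein terms whose inner products against $E_\mathfrak{a}(\cdot,\tfrac12+it)$ can be computed explicitly) to obtain decay of the form $e^{-\frac{\pi}{2}|t|}$ up to polynomial factors; the Eisenstein coefficient $\varphi_{\mathfrak{a}\ell}(\tfrac12+it) \ll_{N,\epsilon} d(\ell)\log|t|$ is controlled by~\eqref{eq:Eisenstein-Fourier-coefficient-bound}, and $1/|\Gamma(\tfrac12+it)| \ll |t|^{-1/2}e^{\frac{\pi}{2}|t|}$; again the $\Gamma(s-\tfrac12\pm it)$ factors contribute $e^{-\pi|t|}$. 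The net exponential rate is $e^{-\frac{\pi}{2}|t|} \cdot e^{\frac{\pi}{2}|t|} \cdot e^{-\pi|t|} = e^{-\pi|t|}$, so the integral over $t \in \mathbb{R}$ converges absolutely and locally uniformly in $s$, and there are finitely many cusps $\mathfrak{a}$, so $\Sigma_{\mathrm{cont}}(s)$ is entire.

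The main obstacle is obtaining the decay estimate for $\langle \mathcal{V}_f, \mu_j\rangle$ (and its Eisenstein analogue) with the correct exponential rate $e^{-\frac{\pi}{2}|t_j|}$ — this is precisely the content of Theorem~\ref{thm:intro-inner-product-bound}, whose proof is deferred to a later section. Granting that theorem, the present lemma is essentially a bookkeeping exercise: one assembles the exponential rates from Stirling, the Hecke--Maass coefficient bound, and the triple-product bound, checks that they cancel to leave genuine exponential decay, and invokes the polynomial Weyl law to conclude. One minor point requiring care is the identification $\langle \mathcal{V}_f, \mu_j\rangle = \langle F, \mu_j\rangle$ and the corresponding bookkeeping for the continuous spectrum, where the subtracted Eisenstein pieces do contribute and must be shown to have at worst polynomial (in fact, via explicit formulas, controlled) growth in $t$.
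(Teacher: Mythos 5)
Your proof is correct and reaches the right conclusion, but it takes a different and noticeably heavier route than the paper does. The paper's own proof of this lemma deliberately uses the \emph{trivial} estimate $\vert \langle \mathcal{V}_f, \mu_j\rangle \vert \ll \Vert \mathcal{V}_f \Vert \cdot \Vert \mu_j\Vert \ll_f 1$ (straight Cauchy--Schwarz with $\mu_j$ $L^2$-normalized). It explicitly flags this as ``very weak,'' noting that the improvement comes later in \S\ref{sec:jutila-triple-inner-products}. The point is that for mere \emph{convergence} of $\Sigma_{\mathrm{disc}}$, one does not need any cancellation from the triple product: the Stirling decay $e^{-\pi\vert t_j\vert}$ in $\Gamma(s-\tfrac12+it_j)\Gamma(s-\tfrac12-it_j)$ already over-compensates the $e^{\pi\vert t_j\vert/2}$ growth from $\rho_j(\ell)$ (controlled on average by Lemma~\ref{lem:Kuznetsov-long}), leaving a net $e^{-\pi\vert t_j\vert/2}$. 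You instead invoke Theorem~\ref{thm:intro-inner-product-bound}, the hard triple-product estimate whose proof occupies all of \S\ref{sec:jutila-triple-inner-products}. This is not circular (the proof of Theorem~\ref{thm:intro-inner-product-bound} does not rely on this lemma), so your argument is logically sound, but it imports a substantial forward dependency where the paper manages with a one-line estimate. What your approach buys is a better final decay rate ($e^{-\pi\vert t_j\vert}$ vs.\ $e^{-\pi\vert t_j\vert/2}$), but that surplus is irrelevant for this lemma and is only needed later when bounding $\Sigma_{\mathrm{disc}}$ quantitatively in vertical strips. For the continuous spectrum your alternative (explicit Rankin--Selberg plus Phragm\'en--Lindel\"of) is exactly the paper's route; the ``Eisenstein analogue of Theorem~\ref{thm:intro-inner-product-bound}'' you mention as a first option is not something the paper states or proves.

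Two small inaccuracies that do not affect the conclusion. First, Lemma~\ref{lem:Maass-form-coefficient-average} applied with $M = \ell$ gives $\vert\rho_j(\ell)\vert \ll_\ell \vert t_j\vert^{1/2} e^{\pi\vert t_j\vert/2}$, not $\vert t_j\vert^\epsilon e^{\pi\vert t_j\vert/2}$ as you wrote; the $\vert t_j\vert^{1/2}$ is harmless against the exponential. Second, in justifying $\langle \mathcal{V}_f, \mu_j\rangle = \langle F, \mu_j\rangle$ you cite orthogonality of cusp forms to Eisenstein series and $\widetilde{E}_\mathfrak{a}$, but for $k = \tfrac32$ the regularization~\eqref{eq:regularization-3/2} also subtracts the non-Eisenstein piece $4y^{1/2}\vert\xi_{3/2}f\vert^2$; the needed orthogonality of cusp forms to norm-squares of theta functions is a separate fact (the paper cites Nelson for this at the start of \S\ref{sec:jutila-triple-inner-products}).
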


\begin{proof}
In the discrete spectrum $\Sigma_{\mathrm{disc}}(s)$, this follows from Lemma~\ref{lem:Kuznetsov-long}, Stirling's approximation (providing decay of size $e^{-\pi \vert t_j \vert}$ for fixed $s$), and the trivial estimate $\vert \langle \mathcal{V}_f, \mu_j \rangle  \vert \ll \Vert \mathcal{V}_f \Vert^{1/2} \cdot \Vert \mu_j \Vert^{1/2} \ll_f 1$. (This estimate is \emph{very} weak, and will be improved in section~\S{\ref{sec:jutila-triple-inner-products}}.)

In the continuous spectrum, convergence follows from Stirling's approximation, weak upper bounds for $\langle \mathcal{V}_f, E_\mathfrak{a}(\cdot, \frac{1}{2}+it) \rangle$ derived from the Rankin--Selberg method and Phragm\'{e}n--Lindel\"{o}f convexity principle, and~\eqref{eq:Eisenstein-Fourier-coefficient-bound}.
\end{proof}

Thus $\Sigma_{\mathrm{disc}}(s)$ defines a meromorphic function on the entire complex plane, with potential poles at $s= \frac{1}{2} \pm it_j - m$ for integer $m \geq 0$ and any spectral type $t_j$. It is analytic in the right half-plane $\Re s > \frac{1}{2} + \Theta$, where $\Theta \leq 7/64$ denotes partial progress towards the Ramanujan--Petersson conjecture~\cite{KimSarnak03}.

The continuous spectrum $\Sigma_{\mathrm{cont}}(s)$ also has meromorphic continuation to all $s$, though the precise continuation to $\Re s > - M$ involves both $\Sigma_{\mathrm{cont}}(s)$ and $O(M)$ residue terms extracted through contour shifts of the integral in $\Sigma_{\mathrm{cont}}$. For a discussion of the continuation process in a similar case, we refer the reader to~\cite[\S{4}]{HoffsteinHulse13} or~\cite[\S{3.3.2}]{HKLDWSphere}. The continuous spectrum is clearly analytic in $\Re s > \frac{1}{2}$.

Thus $\langle \mathcal{V}_f, P_\ell(\cdot, \overline{s}) \rangle$, originally defined for $\Re s > 1+\vert k -1 \vert$, extends meromorphically to a function on the entire complex plane. Since it is analytic in $\Re s > \frac{1}{2}+ \Theta$, any pole of $\langle F, P_\ell(\cdot, \overline{s}) \rangle$ in $\Re s > \frac{1}{2} + \Theta$ occurs as a pole of the explicit regularization factors presented in Proposition~\ref{prop:F-regularization}.

\begin{remark} \label{rem:H-specialization}
In section~\S{\ref{sec:unfolding}}, we gave the general decomposition
\[
	\langle F, P_\ell(\cdot, \overline{s}) \rangle 
	= I_\ell^+(s) + I_\ell^0(s) + I_\ell^\times(s) + I_\ell^-(s).
\]
The two terms $I_\ell^0(s)$ and $I_\ell^\times(s)$ are finite sums and inherit meromorphic continuation to $s \in \mathbb{C}$ from the continuations of $G_k$ and the ${}_2F_1$-hypergeometric function. Thus the continuation of $\langle F, P_\ell(\cdot, \overline{s}) \rangle$ implies a continuation for $I_\ell^+(s) + I_\ell^-(s)$. It is possible, albeit challenging, to establish the meromorphic continuations of $I_\ell^+(s)$ and $I_\ell^-(s)$ as separate entities. Here, the idea is to first continue $I_\ell^-(s)$ by relating it to the Dirichlet series
\[
	\sum_{n =1}^\infty \frac{a_{\xi f}(n) \overline{a_{\xi f}(n+\ell)}}{(n+\ell)^{s-w-k} n^{w+1}},
\]
which admits meromorphic continuation through relation to the triple inner product $\langle y^{2-k} \vert \xi_k f \vert^2, P_\ell(\cdot ,\overline{s}) \rangle$. Establishing this continuation is not so difficult when $\Re w < -1$, but in practice we require $\Re w$ as large as $-k$ (to evaluate a particular contour integral representation), and this creates major complications in weights $k<1$.

Fortunately, these problems disappear altogether for $f= \mathcal{H}$, as the series $I_\ell^-(s)$ defines a \emph{finite} sum in this case (cf.~\S{\ref{sec:H-decomposition}}). To simplify the exposition in this work, we narrow our typical focus from $f \in H_k^\sharp(\Gamma_0(N))$ to $f = \mathcal{H}$. Still, the construction of the meromorphic continuations of $I_\ell^\pm(s)$ and estimates for shifted convolutions of generic mock modular forms of polynomial growth are of independent interest and will appear in future work. \hfill //
\end{remark}

\subsection{Classifying the Rightmost Poles of \texorpdfstring{$D_\ell(s)$}{Dh(s)}} \label{ssec:D_h-poles}

As an application of our work thus far, we classify the rightmost poles of the shifted convolution Dirichlet series $D_\ell(s)$ from~\eqref{eq:D_h-definition-redux}. We prove the following theorem.

\begin{theorem} \label{thm:D_h-poles}
The Dirichlet series $D_\ell(s)$ is analytic in the right half-plane $\Re s > \frac{3}{2}$ and extends meromorphically to all $s \in \mathbb{C}$. If $\ell \equiv 2 \bmod 4$, then $D_\ell(s) = 0$ identically. Otherwise, $D_\ell(s)$ has two simple poles in the right half-plane $\Re s > \frac{1}{2}$, at $s=\frac{3}{2}$ and $s=1$, with residues
\begin{align*}
	\Res_{s=\frac{3}{2}} D_\ell(s) &= 
			 \frac{\pi^2}{126\zeta(3)}
				\big(2\sigma_{-2}(\tfrac{\ell}{4}) - \sigma_{-2}(\tfrac{\ell}{2})+\sigma_{-2}(\ell_o) \big); \\
	\Res_{s=1} D_\ell(s) &=
				-\frac{1}{3\pi}
				\big(2\sigma_{-1}(\tfrac{\ell}{4}) - \sigma_{-1}(\tfrac{\ell}{2})+\sigma_{-1}(\ell_o) \big).
\end{align*}
The function $D_\ell(s)$ is otherwise analytic in $\Re s > \frac{1}{2}$.
\end{theorem}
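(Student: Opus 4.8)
The plan is to read off every clause of the theorem from the identity~\eqref{eq:Hurwitz-inner-product-decomposition} together with the meromorphic continuation produced in \S\ref{sec:automorphic-regularization}--\S\ref{sec:spectral-expansion}. Solving~\eqref{eq:Hurwitz-inner-product-decomposition} for $D_\ell(s)$ gives
\[
	D_\ell(s) = \frac{(4\pi)^{s+\frac12}}{\Gamma(s+\frac12)}
		\Big( \langle y^{\frac32} \vert \mathcal{H} \vert^2, P_\ell(\cdot, \overline{s}) \rangle + \mathcal{A}_\ell(s) \Big),
\]
an identity valid a priori in $\Re s > \frac32$ and then everywhere by uniqueness of continuation, where $\mathcal{A}_\ell(s)$ collects the elementary terms of~\eqref{eq:Hurwitz-inner-product-decomposition} built from $H(\ell)$, $r_1(\ell)$, the finite ${}_2F_1$-sum, and the finite sum $I_\ell^-(s)$ of $G_{3/2}$-integrals. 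Each summand of $\mathcal{A}_\ell(s)$ is meromorphic on $\mathbb{C}$ (the ${}_2F_1$ and $G_{3/2}$ pieces by the standard analysis of incomplete-gamma integrals, cf.\ the continuation remarks around~\eqref{eq:I_h^x-definition}--\eqref{eq:I-minus-definition} and Remark~\ref{rem:H-specialization}), and by Corollary~\ref{cor:H-non-spectral} together with the spectral decomposition $\langle \mathcal{V}_\mathcal{H}, P_\ell(\cdot,\overline{s}) \rangle = \Sigma_{\mathrm{disc}}(s) + \Sigma_{\mathrm{cont}}(s)$ of \S\ref{sec:spectral-expansion}, the inner product $\langle y^{3/2} \vert \mathcal{H} \vert^2, P_\ell(\cdot,\overline{s}) \rangle$ is meromorphic on $\mathbb{C}$ as well. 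Since $1/\Gamma(s+\frac12)$ is entire, $D_\ell(s)$ continues meromorphically to all of $\mathbb{C}$; analyticity in $\Re s > \frac32$ is just the half-plane of absolute convergence of~\eqref{eq:D_h-definition-redux}. The vanishing $D_\ell(s) \equiv 0$ for $\ell \equiv 2 \bmod 4$ is arithmetic: $H(m) = 0$ unless $m \equiv 0$ or $3 \bmod 4$, and for $\ell \equiv 2 \bmod 4$ the congruences $n \equiv 0,3$ and $n+\ell \equiv 0,3 \bmod 4$ are incompatible, so every term of the defining series is zero.

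Assume from now on that $\ell \not\equiv 2 \bmod 4$, and restrict to $\Re s > \frac12$, where the prefactor $(4\pi)^{s+1/2}/\Gamma(s+\frac12)$ is holomorphic and nonvanishing and hence neither creates nor cancels poles. The next step is to sweep the right-hand side term by term in this half-plane. The continuous spectrum $\Sigma_{\mathrm{cont}}(s)$ is holomorphic there (as noted in \S\ref{sec:spectral-expansion}); the finite sum $I_\ell^-(s)$ is holomorphic (as noted after~\eqref{eq:Hurwitz-inner-product-decomposition}); the finite exponential sum $\frac{\Gamma(s-1/2)}{32\pi^{s+3/2}} \sum_d (d+\ell/d)^{1-2s}$ of Corollary~\ref{cor:H-non-spectral}, the ${}_2F_1$-term, and the remaining elementary pieces of $\mathcal{A}_\ell(s)$ are all products of factors holomorphic in $\Re s > \frac12$ (powers $(d+\ell/d)^{1-2s}$, entire; hypergeometric factors with argument $1-\ell/m \in (-\infty,0)$, holomorphic in $\Re s > -\tfrac32$) with Gamma and rational factors whose poles all lie in $\Re s \le \frac12$; and the discrete spectrum $\Sigma_{\mathrm{disc}}(s)$, whose only potential poles sit at $s = \frac12 \pm it_j - m$ with $m \ge 0$, is holomorphic in $\Re s > \frac12$ because every spectral parameter $t_j$ of $\Gamma_0(4)$ is real. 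This last point is the one genuinely external input: $\Gamma_0(4)$ has no exceptional Laplace eigenvalues (Selberg's eigenvalue conjecture is known at level $4$; equivalently $\Gamma_0(4)$ is conjugate to $\Gamma(2)$, for which $\lambda_1 \ge \frac14$ is classical), so one may take $\Theta = 0$ for this group. It then follows that the only poles of $D_\ell(s)$ in $\Re s > \frac12$ are those forced by the two factors $\Gamma(s-\frac32)$ and $\Gamma(s-1)$ appearing explicitly in Corollary~\ref{cor:H-non-spectral}, namely simple poles at $s = \frac32$ and $s = 1$.

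It remains to compute the residues. At $s = \frac32$ the pole is carried by the term $\frac{\sqrt{\pi}\,\Gamma(s+1/2)\Gamma(s-3/2)}{(4\pi\ell)^{s-3/2}\Gamma(s)}$ of Corollary~\ref{cor:H-non-spectral} times the combination $2\sigma_{-2}(\frac\ell4) - \sigma_{-2}(\frac\ell2) + \sigma_{-2}(\ell_o)$ divided by $4032\,\zeta(3)$; multiplying by the prefactor cancels $\Gamma(s+\frac12)$, and evaluating at $s = \frac32$ with $\Res_{s=3/2}\Gamma(s-\frac32) = 1$, $\Gamma(\frac32) = \frac{\sqrt\pi}{2}$, and $(4\pi)^2 = 16\pi^2$ collapses the constant to $32\pi^2 / (4032\,\zeta(3)) = \pi^2/(126\,\zeta(3))$, the claimed residue. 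At $s = 1$ the pole comes from the term $-\frac{\Gamma(s-1)}{(4\pi\ell)^{s-1}}$ times $2\sigma_{-1}(\frac\ell4) - \sigma_{-1}(\frac\ell2) + \sigma_{-1}(\ell_o)$ divided by $288\,\zeta(2)$; the same manipulation with $\Res_{s=1}\Gamma(s-1) = 1$, $(4\pi)^{3/2} = 8\pi^{3/2}$, $\Gamma(\frac32) = \frac{\sqrt\pi}{2}$, and $\zeta(2) = \pi^2/6$ collapses the constant to $-16\pi/(288\,\zeta(2)) = -1/(3\pi)$, again as claimed. Both arithmetic combinations are positive for $\ell \not\equiv 2 \bmod 4$ (each is a positive rational multiple of $\sigma_\nu(\ell_o)$), so the two poles are genuine. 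In short, given the machinery of \S\ref{sec:automorphic-regularization}--\S\ref{sec:spectral-expansion} the proof is essentially bookkeeping, and the only substantive obstacle is the appeal to the absence of exceptional eigenvalues for $\Gamma_0(4)$: without it one would obtain analyticity only in $\Re s > \frac12 + \Theta$, and a possible real pole of $\Sigma_{\mathrm{disc}}(s)$ at $s = \frac12 + r_j$ would have to be excluded by other means.
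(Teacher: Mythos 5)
Your proof is correct and follows essentially the same route as the paper's: combine the unfolded identity~\eqref{eq:Hurwitz-inner-product-decomposition} with Corollary~\ref{cor:H-non-spectral} and the spectral decomposition $\langle \mathcal{V}_\mathcal{H}, P_\ell(\cdot,\overline{s}) \rangle = \Sigma_{\mathrm{disc}}(s) + \Sigma_{\mathrm{cont}}(s)$, invoke the absence of exceptional eigenvalues to push the pole-free region of the spectral part to $\Re s > \tfrac12$, and read the two poles off the explicit Gamma factors. The paper collapses the algebra into a single display~\eqref{eq:D_h-decomposition-full} and then says ``computation of residues completes the proof,'' whereas you carry out that computation; you also supply three details the paper leaves implicit but which are worth recording: the arithmetic explanation of the vanishing for $\ell \equiv 2 \bmod 4$ ($H$ supported on $0,3 \bmod 4$), the verification that the arithmetic combinations $2\sigma_{-\nu}(\tfrac\ell4)-\sigma_{-\nu}(\tfrac\ell2)+\sigma_{-\nu}(\ell_o)$ are strictly positive for $\ell \not\equiv 2 \bmod 4$ (so the claimed poles are genuine rather than merely potential), and a self-contained justification of $\Theta = 0$ at level $4$ via conjugacy $\Gamma_0(4) \cong \Gamma(2)$ in place of the paper's citation of Huxley. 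These are welcome additions, not a different method.
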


\begin{proof}
Equation~\eqref{eq:Hurwitz-inner-product-decomposition} relates $D_\ell(s)$ to $\langle y^{3/2} \vert \mathcal{H} \vert^2, P_\ell(\cdot, \overline{s}) \rangle$ and Corollary~\ref{cor:H-non-spectral} relates $\langle y^{3/2} \vert \mathcal{H} \vert^2, P_\ell(\cdot, \overline{s}) \rangle$ to $\langle \mathcal{V}_\mathcal{H}, P_\ell(\cdot, \overline{s}) \rangle$. When combined, we produce
\begin{align} \label{eq:D_h-decomposition-full}
	D_\ell(s) &=
		\frac{\pi^{\frac{5}{2}} \Gamma(s-\frac{3}{2})}
					{\ell^{s-\frac{3}{2}} \Gamma(s)}
				\Big(\frac{2\sigma_{-2}(\frac{\ell}{4}) - \sigma_{-2}(\frac{\ell}{2})+\sigma_{-2}(\ell_o)}
									{252\zeta(3)} \Big) \\
		& \quad
		 - \frac{\pi^{\frac{3}{2}} \Gamma(s-1)}{\ell^{s-1} \Gamma(s+\frac{1}{2})}
				\Big(\frac{2\sigma_{-1}(\frac{\ell}{4}) - \sigma_{-1}(\frac{\ell}{2})+\sigma_{-1}(\ell_o)}
				{36\zeta(2)} \Big) \\
		& \quad
		+	\frac{(4\pi)^{s+\frac{1}{2}}
				\langle \mathcal{V}_\mathcal{H}, P_\ell(\cdot, \overline{s}) \rangle}
				{\Gamma(s+\frac{1}{2})}
			+ \frac{ H(\ell)}{12 \ell^{s+\frac{1}{2}} (s-\frac{1}{2})} 
	   + \frac{H(\ell)\Gamma(s)}{4 \sqrt{\pi} \ell^s \Gamma(s+\frac{1}{2})} \\
	& \quad
	+ \frac{r_1(\ell) }{32 \pi \ell^{s-\frac{1}{2}} s (s-\frac{1}{2})}
		+ \frac{r_1(\ell) \Gamma(s)}{96 \sqrt{\pi} \ell^s \Gamma(s+\frac{3}{2})} \\
	& \quad
		- \frac{\Gamma(s)}{\Gamma(s+\frac{3}{2})} 
		\sum_{m=1}^{\ell-1} \frac{H(\ell-m)  r_1(m)}{8 \sqrt{\pi} m^s} 
		{}_2F_1\Big(\begin{matrix} s, s+\frac{1}{2} \\ s+\frac{3}{2} \end{matrix}
					\Big\vert 1-\frac{\ell}{m} \Big) \\
	& \quad 
	- \sum_{\substack{m_1^2 - m_2^2 =\ell \\ m_1,m_2 \geq 1}}
		\frac{m_1 m_2  G_{3/2}(s,m_2^2, m_1^2) }{16\pi \Gamma(s+\frac{1}{2})}
	+ \frac{2^{2s-4}}{\pi (s-\frac{1}{2})}
		 \!\!\!\! \sum_{\substack{d \mid \ell \\ d \equiv \frac{\ell}{d} \bmod 2}} \!\!\!
			(d+\tfrac{\ell}{d})^{1-2s}.
\end{align}
Recall that $\langle \mathcal{V}_\mathcal{H}, P_\ell(\cdot, \overline{s}) \rangle$ is analytic in $\Re s > \frac{1}{2} + \Theta$. By Huxley's resolution of the Selberg eigenvalue conjecture in low level~\cite{Huxley85}, the inner product is in fact analytic in $\Re s > \frac{1}{2}$. Thus, by previous comments, all but the first two terms at right above are analytic in $\Re s > \frac{1}{2}$. Computation of residues completes the proof.
\end{proof}

Since $D_\ell(s)$ has non-negative coefficients, the Wiener--Ikehara theorem (see, e.g.~\cite[Corollary 8.8]{MontgomeryVaughn06}) immediately produces the following:

\begin{corollary} \label{cor:Wiener-Ikehara}
For fixed $\ell$, as $X \to \infty$ we have
\[
	\sum_{n \leq X} H(n)H(n+\ell)
		\sim \frac{\pi^2 X^2}{252\zeta(3)}
				\big(2\sigma_{-2}(\tfrac{\ell}{4}) - \sigma_{-2}(\tfrac{\ell}{2})+\sigma_{-2}(\ell_o) \big).
\]
\end{corollary}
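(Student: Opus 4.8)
The statement is a routine Tauberian consequence of Theorem~\ref{thm:D_h-poles}; essentially all of the content has already been established in the meromorphic continuation and pole classification of $D_\ell(s)$, so the plan is simply to feed that information into the Wiener--Ikehara theorem. First I would reindex $D_\ell(s)$ so that it becomes a Dirichlet series in the classical sense. Writing $m = n+\ell$ and setting $c_m := H(m-\ell)H(m)$ for integers $m > \ell$ and $c_m := 0$ otherwise, we have $D_\ell(s) = \sum_{m} c_m m^{-(s+1/2)}$; moreover $c_m \geq 0$, since $H(j) \geq 0$ for every $j \geq 1$. After the substitution $w = s + \tfrac{1}{2}$, the function $g(w) := \sum_m c_m m^{-w}$ converges absolutely for $\Re w > 2$ by (for instance) the bound $H(j) \ll_\epsilon j^{1/2+\epsilon}$ from Lemma~\ref{lem:H-growth}.

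Next I would transcribe the relevant output of Theorem~\ref{thm:D_h-poles}. When $\ell \not\equiv 2 \bmod 4$, that theorem says $D_\ell(s)$ is holomorphic in $\Re s > \tfrac{1}{2}$ apart from simple poles at $s = \tfrac{3}{2}$ and $s = 1$; in particular, on the closed half-plane $\Re w \geq 2$ the function $g$ is holomorphic except for a single simple pole at $w = 2$ with residue
\[
	r := \Res_{s = 3/2} D_\ell(s) = \frac{\pi^2}{126\zeta(3)}\big(2\sigma_{-2}(\tfrac{\ell}{4}) - \sigma_{-2}(\tfrac{\ell}{2})+\sigma_{-2}(\ell_o)\big).
\]
This is exactly the hypothesis required by the Wiener--Ikehara theorem in the form of~\cite[Corollary 8.8]{MontgomeryVaughn06}: non-negative coefficients, convergence for $\Re w > 2$, and holomorphic continuation of $g(w) - r/(w-2)$ to the closed line $\Re w = 2$ (no vertical growth estimate is needed). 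Applying it yields $\sum_{m \leq Y} c_m \sim \tfrac{r}{2} Y^2$ as $Y \to \infty$, the factor $\tfrac{1}{2}$ being the reciprocal of the abscissa $2$.

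It remains to unwind the reindexing. For fixed $\ell$ we have $\sum_{m \leq Y} c_m = \sum_{n \leq Y - \ell} H(n)H(n+\ell)$, so setting $X = Y - \ell$ and noting $(X+\ell)^2 \sim X^2$ as $X \to \infty$ gives $\sum_{n \leq X} H(n) H(n+\ell) \sim \tfrac{r}{2} X^2$, which is the claimed asymptotic since $\tfrac{r}{2} = \frac{\pi^2}{252\zeta(3)}\big(2\sigma_{-2}(\tfrac{\ell}{4}) - \sigma_{-2}(\tfrac{\ell}{2})+\sigma_{-2}(\ell_o)\big)$. The remaining case $\ell \equiv 2 \bmod 4$ is degenerate: Theorem~\ref{thm:D_h-poles} gives $D_\ell \equiv 0$, equivalently $H(n)H(n+\ell) = 0$ for all $n \geq 1$ (one of $n$, $n+\ell$ lands in a residue class mod $4$ on which $H$ vanishes), and the asserted main term also vanishes, so both sides are identically zero.

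There is no substantive obstacle here: every step is a direct application of a result stated above. The only points deserving a moment's attention are checking that no pole of $D_\ell(s)$ other than the one at $s = \tfrac{3}{2}$ lies on the vertical line $\Re s = \tfrac{3}{2}$ — which is immediate from Theorem~\ref{thm:D_h-poles}, since the only other pole in $\Re s > \tfrac{1}{2}$ sits at $s = 1$ — and that the precise hypotheses of the cited Wiener--Ikehara statement are literally met by $g(w)$ after the affine change of variable $w = s + \tfrac12$.
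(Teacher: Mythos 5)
Your proposal is correct and takes the same route as the paper: the paper's proof consists of the single observation that $D_\ell(s)$ has non-negative coefficients and therefore the Wiener--Ikehara theorem, combined with the pole data from Theorem~\ref{thm:D_h-poles}, gives the claim. You have simply spelled out the reindexing $m = n+\ell$, the affine change $w = s + \tfrac12$, the factor $\tfrac{1}{a}$ coming from the abscissa $a = 2$, and the degenerate case $\ell \equiv 2 \bmod 4$, all of which are checked correctly.
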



\section{Bounding \texorpdfstring{$D_\ell(s)$}{Dh(s)} in Vertical Strips} \label{sec:D_h-growth-bounds}

To quantify the rate of convergence in Corollary~\ref{cor:Wiener-Ikehara}, we require additional information about the meromorphic properties of $D_\ell(s)$. Specifically, we require uniform estimates for the growth of $D_\ell(s)$ with respect to $\vert \Im s \vert$ in vertical strips outside the domain of absolute convergence.

It suffices to produce growth estimates for each component of the decomposition of $D_\ell(s)$ given in~\eqref{eq:D_h-decomposition-full}. In this section, we produce uniform estimates for every term besides $(4\pi)^{s+\frac{1}{2}} \langle \mathcal{V}_\mathcal{H}, P_\ell(\cdot, \overline{s}) \rangle / \Gamma(s+\frac{1}{2})$, which requires more involved techniques. We prove the following proposition.

\begin{proposition} \label{prop:D_h-preliminary-growth}
Fix $s$ with $\Re s > 0$. Away from poles of $D_\ell(s)$, we have
\[
	D_\ell(s) \ll_\epsilon
		\ell^{\frac{1}{2}-\Re s + \epsilon} \vert s \vert^{-\frac{3}{2}}
		\big(\ell
		+ \vert s \vert \big)
		+ \Big\vert
			\frac{\langle \mathcal{V}_\mathcal{H}, P_\ell(\cdot, \overline{s}) \rangle}{\Gamma(s+\frac{1}{2})}
			\Big\vert
\]
for all $\epsilon > 0$.
\end{proposition}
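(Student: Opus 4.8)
\emph{Plan.} The whole argument hinges on the explicit decomposition~\eqref{eq:D_h-decomposition-full} of $D_\ell(s)$ obtained in the proof of Theorem~\ref{thm:D_h-poles}. The term $(4\pi)^{s+\frac12}\langle \mathcal{V}_\mathcal{H}, P_\ell(\cdot,\overline{s})\rangle/\Gamma(s+\tfrac12)$ is precisely the quantity that appears, untouched, on the right-hand side of the proposition, so what remains is to bound every other term of~\eqref{eq:D_h-decomposition-full} by $O_\epsilon\big(\ell^{\frac12-\Re s+\epsilon}|s|^{-3/2}(\ell+|s|)\big)$, uniformly for $\Re s$ in a fixed compact subset of $(0,\infty)$ and away from the finitely many explicit poles. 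The workhorses will be Stirling's formula in the form $\Gamma(s+\alpha)/\Gamma(s+\beta)\ll_{\alpha,\beta}(1+|s|)^{\Re(\alpha-\beta)}$, valid uniformly for $\Re s$ bounded, together with the crude arithmetic bounds $H(\ell)\ll_\epsilon\ell^{1/2+\epsilon}$ (Lemma~\ref{lem:H-growth}), $r_1(\ell)\le 2$, $\sigma_{-\nu}(\ell)\ll_\epsilon\ell^\epsilon$ for $\nu>0$, $\tau(\ell)\ll_\epsilon\ell^\epsilon$, and $\#\{(m_1,m_2):m_1^2-m_2^2=\ell,\ m_1,m_2\ge1\}\ll_\epsilon\ell^\epsilon$.

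\emph{Elementary terms.} First I would dispatch the genuinely elementary pieces of~\eqref{eq:D_h-decomposition-full}: the first two lines, the four terms $H(\ell)\Gamma(s)/(4\sqrt\pi\,\ell^s\Gamma(s+\tfrac12))$, $H(\ell)/(12\ell^{s+1/2}(s-\tfrac12))$, $r_1(\ell)/(32\pi\ell^{s-1/2}s(s-\tfrac12))$, $r_1(\ell)\Gamma(s)/(96\sqrt\pi\,\ell^s\Gamma(s+\tfrac32))$, and the $\theta$-shift sum $2^{2s-4}\pi^{-1}(s-\tfrac12)^{-1}\sum_{d\mid\ell}(d+\ell/d)^{1-2s}$. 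Each of these is a (finite sum of a) ratio of Gamma functions times a negative power of $\ell$ of size $\ell^{O(1)-\Re s}$ or $\ell^{O(1)-2\Re s}$ times an arithmetic coefficient of size $\ell^\epsilon$ or $\ell^{1/2+\epsilon}$; plugging in Stirling and the bounds above shows, term by term, that each is $\ll_\epsilon\ell^{3/2-\Re s+\epsilon}|s|^{-3/2}$ or $\ll_\epsilon\ell^{1/2-\Re s+\epsilon}|s|^{-1/2}$, and both of these are absorbed into $\ell^{\frac12-\Re s+\epsilon}|s|^{-3/2}(\ell+|s|)$. For the $\theta$-shift sum I would also use $d+\ell/d\ge 2\sqrt\ell$.

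\emph{Cross term and negative part.} The two remaining contributions — the $I_\ell^\times$-sum (the line of~\eqref{eq:D_h-decomposition-full} involving $H(\ell-m)r_1(m)m^{-s}$ and ${}_2F_1(s,s+\tfrac12;s+\tfrac32\mid 1-\ell/m)$) and the $I_\ell^-$-sum (the finite sum over $m_1^2-m_2^2=\ell$ of $m_1m_2\,G_{3/2}(s,m_2^2,m_1^2)/\Gamma(s+\tfrac12)$) — require real work. Here I would go back to the integral forms~\eqref{eq:I_h^x-definition} and~\eqref{eq:I-minus-definition} and use $\Gamma(-\tfrac12,y)=2y^{-1/2}e^{-y}-2\sqrt\pi\,\operatorname{erfc}(\sqrt y)$ to split each integrand into a leading piece and an $\operatorname{erfc}$-remainder. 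The leading pieces integrate to clean expressions — a ratio of Gamma functions times $\ell^{-s}\sum_{n^2<\ell}H(\ell-n^2)$ in the cross case, and a ratio of Gamma functions times $\sum_{m_1^2-m_2^2=\ell}m_1^{1-2s}$ in the negative case — which I would control by feeding in the Kronecker--Hurwitz relation~\eqref{eq:class-number-relation} and the divisor-sum identities of Corollary~\ref{cor:H-non-spectral}, and arranging the resulting cancellations against the elementary terms of the previous paragraph; the exponential factor $e^{-\pi|\Im s|/2}$ coming from $\Gamma(s-\tfrac12)$ cancels the $e^{\pi|\Im s|/2}$ from $1/\Gamma(s+\tfrac12)$. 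The $\operatorname{erfc}$-remainders I would bound using $\operatorname{erfc}(x)\le e^{-x^2}$ together with $m_1\ge\sqrt\ell$ (negative case) and a single integration by parts to gain a power of $|\Im s|$ (cross case). For the leftover ${}_2F_1$ I would use the Euler integral ${}_2F_1(s,s+\tfrac12;s+\tfrac32\mid z)=(s+\tfrac12)\int_0^1 t^{s-1/2}(1-zt)^{-s}\,dt$ and the Pfaff relation ${}_2F_1(s,s+\tfrac12;s+\tfrac32\mid 1-\tfrac\ell m)=(\ell/m)^{-s}\,{}_2F_1(s,1;s+\tfrac32\mid 1-\tfrac m\ell)$, splitting the integral at $t=m/\ell$ to extract the decay $(m/\ell)^{\Re s}$ and integrating by parts once to recover a factor of $|\Im s|$.

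\emph{Main obstacle.} The elementary terms are routine bookkeeping; the hard part will be the uniform (in $\ell$ \emph{and} $|\Im s|$) estimation of the $I_\ell^\times$- and $I_\ell^-$-contributions. Two features make this delicate. First, the naive bound $\sum_{n^2<\ell}H(\ell-n^2)\ll_\epsilon\ell^{1+\epsilon}$ is too weak to fit the target, so one must exploit the arithmetic structure of these sums (through~\eqref{eq:class-number-relation} and Corollary~\ref{cor:H-non-spectral}) and track the cancellations against the elementary pieces carefully. Second, the hypergeometric argument $1-\ell/m$ is of size comparable to $\ell$, so the Euler/Pfaff estimate must trade the $(m/\ell)^{\Re s}$-decay against the $|\Im s|$-growth while staying uniform as $m$ ranges over the (up to $\sqrt\ell$) squares below $\ell$. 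Once both families of terms are bounded by $O_\epsilon\big(\ell^{\frac12-\Re s+\epsilon}|s|^{-3/2}(\ell+|s|)\big)$, summing the finitely many contributions of~\eqref{eq:D_h-decomposition-full} yields the proposition.
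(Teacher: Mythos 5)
Your overall plan — start from the decomposition \eqref{eq:D_h-decomposition-full}, carry $\langle\mathcal{V}_\mathcal{H},P_\ell(\cdot,\overline s)\rangle/\Gamma(s+\frac12)$ through unchanged, and absorb all the other terms into $\ell^{\frac12-\Re s+\epsilon}|s|^{-3/2}(\ell+|s|)$ — is exactly what the paper does, and your treatment of the elementary pieces (Stirling plus Lemma~\ref{lem:H-growth}, $\sigma_{-\nu}(\ell)\ll\ell^\epsilon$, $d+\ell/d\ge 2\sqrt\ell$) matches the bound the paper collects in \eqref{eq:simplest-terms-bounds}. Where you part ways with the paper is the cross term and the $G_{3/2}$-sum. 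The paper does not open up the incomplete gamma function via $\operatorname{erfc}$; instead it proves two clean intermediate lemmas, Lemma~\ref{lem:hypergeometric-growth} (${}_2F_1(s,s+\frac12;s+\frac32\mid 1-\ell/m)\ll (m/\ell)^{\Re s}$, via Euler's integral and a Pfaff transform, with no need to integrate by parts for an extra $|\Im s|$) and Lemma~\ref{lem:G_k-growth} (a contour-integral bound for $G_{3/2}(s,n,n+\ell)$ that produces the $e^{-\pi|\Im s|/2}$ decay directly), and then just sums the resulting absolute values. Your erfc-splitting route might be made to work, but it is substantially more involved and you have not carried it out; it is not the argument the paper uses.

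There is also a genuine error in your ``Main obstacle'' paragraph: you assert that the naive bound $\sum_{n^2<\ell}H(\ell-n^2)\ll_\epsilon\ell^{1+\epsilon}$ is too weak and that one must ``exploit arithmetic structure'' via the Kronecker--Hurwitz relation and arrange cancellations against the elementary terms. That is not so. The target's $\ell$-heavy piece is $\ell^{\frac12-\Re s+\epsilon}|s|^{-3/2}\cdot\ell=\ell^{\frac32-\Re s+\epsilon}|s|^{-3/2}$, and the cross-term contribution — via $(m/\ell)^{\Re s}$, Stirling for $\Gamma(s)/\Gamma(s+\frac32)\ll|s|^{-3/2}$, and precisely the naive estimate $\sum_m H(\ell-m)r_1(m)\ll\ell^{1+\epsilon}$ — is $\ll\ell^{1-\Re s+\epsilon}|s|^{-3/2}$, which is comfortably dominated. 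The same is true of the $G_{3/2}$-sum: crude absolute-value bounds suffice everywhere, and no cancellation against the elementary terms is needed or used. This misreading of the target estimate is what sent you down the detour of erfc-remainders, \eqref{eq:class-number-relation}, and an unnecessary integration by parts; the actual proof is plain bookkeeping once Lemmas~\ref{lem:hypergeometric-growth} and~\ref{lem:G_k-growth} are in hand.
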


Our proof of Proposition~\ref{prop:D_h-preliminary-growth} requires a few lemmas. The first of these is a simple upper bound for the Hurwitz class number.

\begin{lemma} \label{lem:H-growth}
We have $H(\ell) \ll_\epsilon \ell^{\frac{1}{2} + \epsilon}$ for all $\epsilon > 0$.
\end{lemma}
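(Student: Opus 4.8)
The plan is to bound the Hurwitz class number by relating it to the Dirichlet class number $h(-D)$ and invoking the classical analytic class number formula together with a convexity bound for the quadratic $L$-function at $s=1$. First I would reduce to the case where $-\ell$ (or more precisely the discriminant relevant to $\ell$) is a fundamental discriminant. Recall that $H(\ell)$ is built from the primitive class numbers: one has the exact formula
\begin{align*}
	H(\ell) = \frac{1}{w(\ell)}\sum_{f^2 \mid \ell,\ -\ell/f^2 \equiv 0,1 (4)} h\Big(-\frac{\ell}{f^2}\Big),
\end{align*}
where the weighting accounts for the factors $\tfrac12$ and $\tfrac13$ attached to the forms $x^2+y^2$ and $x^2-xy+y^2$, and where the sum runs over $f$ with $-\ell/f^2$ a (not necessarily fundamental) discriminant. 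Since the number of such $f$ is $O_\epsilon(\ell^\epsilon)$ and each weight is bounded, it suffices to show $h(-D) \ll_\epsilon D^{1/2+\epsilon}$ for $-D$ a discriminant with $D \le \ell$.

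For the bound on $h(-D)$ I would use Dirichlet's formula $h(-D) = \frac{w\sqrt{D}}{2\pi} L(1,\chi_{-D})$ (for $D > 4$, so $w = 2$), giving $h(-D) \asymp \sqrt{D}\, L(1,\chi_{-D})$. Then the key step is the standard bound $L(1,\chi_{-D}) \ll \log D \ll_\epsilon D^\epsilon$, which follows from partial summation applied to $\sum_{n \le D} \chi_{-D}(n)$ together with the Pólya--Vinogradov inequality (or even just the trivial bound on character sums of length $\le D$), truncating the Dirichlet series for $L(s,\chi_{-D})$ at $n = D$. Combining, $h(-D) \ll_\epsilon D^{1/2+\epsilon} \le \ell^{1/2+\epsilon}$, and feeding this back through the divisor-sum reduction above yields $H(\ell) \ll_\epsilon \ell^{1/2+\epsilon}$, absorbing the $O_\epsilon(\ell^\epsilon)$ count of divisors $f$ into the $\epsilon$.

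I expect no serious obstacle here: this is a classical estimate, and the only point requiring a little care is the bookkeeping in the reduction from $H(\ell)$ to a short sum of primitive class numbers — ensuring the congruence conditions on $-\ell/f^2$ are handled correctly and that the rational weights $w(\ell)^{-1} \in \{1, \tfrac12, \tfrac13\}$ together with the convention $H(0) = -\tfrac1{12}$ (which is irrelevant for $\ell \ge 1$) do not affect the $O$-estimate. Alternatively, one could bypass the reduction entirely by noting $\sum_{n \le X} H(n) \ll X^{3/2}$ (which follows, e.g., from the Eichler--Selberg relation~\eqref{eq:class-number-relation} or from~\eqref{eq:H-definition} directly) and that $H(n)$ cannot be too large at a single point because $H(n) \le H(n') + O(\cdots)$ is not available — so the $L$-function route is cleanest. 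A third, self-contained option: since $r_3(n) = 12(1-(\tfrac{-n}{2}))h(-n)$ when $-n$ is fundamental and $r_3(n) \ll_\epsilon n^{1/2+\epsilon}$ by the classical bound on the number of representations as a sum of three squares, one immediately gets $h(-n) \ll_\epsilon n^{1/2+\epsilon}$ on those residue classes, and the remaining class $-n \equiv 1 \bmod 8$ is handled by the $L(1,\chi)$ bound above.
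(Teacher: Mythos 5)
Your argument is correct, but it takes a genuinely different route from the paper's. The paper derives the individual bound $\widetilde{h}(-\ell) \ll_\epsilon \ell^{1/2+\epsilon}$ directly from Wolke's moment estimate~\eqref{eq:class-number-moment}: since all terms there are nonnegative, $\widetilde{h}(-\ell)^\alpha \leq \sum_{n \leq \ell} \widetilde{h}(-n)^\alpha \ll_\alpha \ell^{1+\alpha/2}$, and taking $\alpha$ large gives $\widetilde{h}(-\ell) \ll_\alpha \ell^{1/2 + 1/\alpha}$; the paper then expands $h(-\ell) = \sum_{d^2 \mid \ell} \widetilde{h}(-\ell/d^2)$ and observes $H(\ell) = h(-\ell) + O(1)$. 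You instead unwind to first principles: divisor decomposition of $H(\ell)$ into primitive class numbers, then the analytic class number formula $h(-D) \asymp \sqrt{D}\,L(1,\chi_{-D})$, then the standard bound $L(1,\chi) \ll \log D$ via P\'olya--Vinogradov. Your approach is self-contained where the paper leans on a cited result, and it makes the analytic input transparent; the paper's is shorter given that~\eqref{eq:class-number-moment} is already on the page. One point to tighten in your write-up: the class number formula as you quote it (with $w=2$, $D>4$) is for \emph{fundamental} discriminants, but the divisors $-\ell/f^2$ in your decomposition need not be fundamental; you either need the order version of the formula (which introduces a harmless Euler factor $\ll f \log\log f$) or you should reduce all the way down to fundamental discriminants as the paper implicitly does by composing the two divisor sums. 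Also your formula for $H(\ell)$ with a uniform prefactor $1/w(\ell)$ is not quite the standard one (the weights $\tfrac12, \tfrac13$ attach to individual classes, not to all of $H(\ell)$), but this affects the answer only by $O(1)$, which you correctly note is immaterial. Your third, self-contained option via $r_3(n) \ll_\epsilon n^{1/2+\epsilon}$ is appealing but, as you yourself flag, only covers $-n \not\equiv 1 \bmod 8$, so the $L$-function route remains necessary for the full statement.
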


\begin{proof} The moment estimate~\eqref{eq:class-number-moment} implies $\widetilde{h}(-\ell) \ll_\epsilon \ell^{\frac{1}{2} + \epsilon}$ for all $\epsilon > 0$. Since $h(-\ell) = \sum_{d^2 \mid \ell} \widetilde{h}(-\ell/d^2)$, we have $h(-\ell) \ll \sum_{d \geq 1} (\ell/d^2)^{\frac{1}{2}+\epsilon} \ll \ell^{\frac{1}{2}+\epsilon}$. The same bound holds for $H(\ell)$, since $H(\ell ) = h(-\ell) + O(1)$.
\end{proof}

We also require uniform estimates for the ${}_2F_1$-hypergeometric function and the function $G_{3/2}$, which are provided by the following two lemmas. 

\begin{lemma} \label{lem:hypergeometric-growth}
For $1 \leq m \leq \ell-1$ and $\Re s > 0$, we have
\[
{}_2F_1\Big(\begin{matrix} s, s+\frac{1}{2} \\ s+\frac{3}{2} \end{matrix}\Big\vert 1-\frac{\ell}{m} \Big)
	\ll \Big(\frac{m}{\ell}\Big)^{\Re s}.
\]
\end{lemma}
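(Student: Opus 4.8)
The plan is to use an integral representation of the hypergeometric function that keeps the argument $1 - \ell/m$ (which lies in $(-\infty, 0)$ since $m \leq \ell - 1$) in a regime where Euler's formula converges absolutely. Recall Euler's integral representation
\[
	{}_2F_1\Big(\begin{matrix} a, b \\ c \end{matrix}\Big\vert z \Big)
	= \frac{\Gamma(c)}{\Gamma(b)\Gamma(c-b)}
		\int_0^1 t^{b-1} (1-t)^{c-b-1} (1-zt)^{-a} \, dt,
\]
valid for $\Re c > \Re b > 0$ and $z \notin [1,\infty)$. With $a = s$, $b = s + \tfrac{1}{2}$, $c = s + \tfrac{3}{2}$, and $z = 1 - \ell/m$, the conditions $\Re c > \Re b > 0$ hold whenever $\Re s > 0$. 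Since $z \leq 0$ here, we have $1 - zt = 1 + (\tfrac{\ell}{m} - 1)t \geq 1$ for $t \in [0,1]$, so $(1-zt)^{-a}$ is harmless in modulus once we control its real part.

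The key steps, in order, are as follows. First I would write the integrand's modulus: $|t^{b-1}(1-t)^{c-b-1}(1-zt)^{-a}| = t^{\Re s - 1/2} (1-t)^{0} (1 - zt)^{-\Re s} \cdot (\text{bounded phase factors})$, noting $c - b - 1 = -1$ so $(1-t)^{c-b-1}$ contributes the constant function $1$ in modulus; more carefully one keeps $(1-t)^{c-b-1}$ and observes $\Re(c-b-1) = -1$... actually $c - b = 1$ so $c - b - 1 = 0$, giving exactly $(1-t)^0 = 1$. Then the integral is bounded by $\int_0^1 t^{\Re s - 1/2} (1 + (\tfrac{\ell}{m}-1)t)^{-\Re s}\,dt$. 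Second, I would substitute $u = (\tfrac{\ell}{m} - 1) t$ (or directly compare with a Beta-type integral): since $1 + (\tfrac{\ell}{m}-1)t \geq \max(1, (\tfrac{\ell}{m}-1)t) \geq (\tfrac{\ell}{m}-1)^{\Re s} t^{\Re s} / (\text{something})$... more cleanly, bound $(1 + (\tfrac{\ell}{m}-1)t)^{-\Re s} \leq ((\tfrac{\ell}{m}-1) t)^{-\Re s} \cdot (\text{1 when } t \geq m/(\ell-m))$, splitting the integral at $t_0 = m/(\ell - m)$. On $[0, t_0]$ use the bound $1$, giving $\int_0^{t_0} t^{\Re s - 1/2}\,dt \ll t_0^{\Re s + 1/2} \ll (m/\ell)^{\Re s+1/2}$; on $[t_0, 1]$ use $(1-zt)^{-\Re s} \leq ((\tfrac{\ell}{m}-1)t)^{-\Re s}$, giving $((\tfrac{\ell}{m}-1))^{-\Re s}\int_{t_0}^1 t^{-1/2}\,dt \ll (m/\ell)^{\Re s}$. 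Third, I would compare this to the normalizing factor $\Gamma(c)/(\Gamma(b)\Gamma(c-b)) = \Gamma(s+\tfrac{3}{2})/(\Gamma(s+\tfrac12)\Gamma(1)) = s + \tfrac12$, which, combined with a matching factor of order $1/\Re s$ from the more careful Beta-integral estimate on $[0,t_0]$, stays bounded — so the net bound is $O((m/\ell)^{\Re s})$, with implied constant absolute.

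The main obstacle I anticipate is purely bookkeeping: tracking the $\Gamma$-factor cancellation so that the final constant is genuinely \emph{absolute} (independent of $s$), rather than merely depending on $s$, since Proposition \ref{prop:D_h-preliminary-growth} and the later vertical-strip analysis need uniformity in $\Im s$. One must verify that the factor $|s + \tfrac12|$ out front is cancelled by a $\tfrac{1}{\Re s}$-type gain from the $\int_0^{t_0} t^{\Re s - 1/2}\,dt = t_0^{\Re s + 1/2}/(\Re s + \tfrac12)$ term, and that no spurious growth in $|\Im s|$ enters through the phase $(1-zt)^{-i\Im s}$, which has modulus $1$ since $1 - zt > 0$. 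Once these are checked the estimate is immediate; alternatively, one can invoke a standard uniform bound for ${}_2F_1$ at negative argument (e.g.\ via the connection formula relating $z \to 0$ and $z \to \infty$ behavior, using that $c - a - b = 1 > 0$ so ${}_2F_1 \to \Gamma(c)\Gamma(c-a-b)/(\Gamma(c-a)\Gamma(c-b))$ is bounded as $z \to 1^-$), but the direct integral estimate above is self-contained and I would present that.
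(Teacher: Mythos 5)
Your route---applying Euler's integral representation directly and splitting at $t_0 = m/(\ell-m)$---differs from the paper's, which first invokes the Euler transformation (GR~9.131(1)) to extract the factor $(\ell/m)^{1-s}$ and then bounds the remaining ${}_2F_1(\tfrac{3}{2}, 1; s+\tfrac{3}{2}\,|\, 1-\ell/m)$ by asserting it is dominated by its $s=0$ value $m/\ell$. Your split correctly captures the $(m/\ell)$-dependence, but the concern you flag about the prefactor $s+\tfrac{1}{2}$ is not a bookkeeping point you can resolve; it is a genuine gap. Once you pass to absolute values, the tail contribution over $[t_0,1]$ is $\asymp (m/(\ell-m))^{\Re s}$ with \emph{no} compensating $1/(\Re s + \tfrac{1}{2})$ factor, and even on $[0,t_0]$ the ratio $|s+\tfrac{1}{2}|/(\Re s + \tfrac{1}{2})$ grows like $|\Im s|$. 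All the oscillation from $(1-zt)^{-i\Im s}$ is discarded the moment you take moduli, so your method proves only ${}_2F_1 \ll_{\Re s} |s|\,(m/\ell)^{\Re s}$, not the stated bound.

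The deeper issue is that the lemma cannot hold with an implied constant independent of $\Im s$. Composing Euler's and Pfaff's transformations gives ${}_2F_1(s, s+\tfrac{1}{2}; s+\tfrac{3}{2} \,|\, 1-\ell/m) = (\ell/m)^{-s}\,{}_2F_1(s, 1; s+\tfrac{3}{2} \,|\, 1-m/\ell)$, so the claim is equivalent to $|{}_2F_1(s, 1; s+\tfrac{3}{2} \,|\, w)| \ll 1$ uniformly for $w \in [0,1)$. But Gauss's summation theorem (valid since the parametric excess is $\tfrac{1}{2}>0$) gives ${}_2F_1(s, 1; s+\tfrac{3}{2} \,|\, 1) = \Gamma(s+\tfrac{3}{2})\Gamma(\tfrac{1}{2})\big/\big(\Gamma(\tfrac{3}{2})\Gamma(s+\tfrac{1}{2})\big) = 2s+1$, whose modulus is $\asymp |s|$. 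Thus as $m/\ell \to 0$ the left-hand side of the lemma is $\sim |2s+1|\,(m/\ell)^{\Re s}$, and the statement requires a correcting factor of order $1+|s|$. The paper's own one-line justification is likewise wrong in direction: for real $s>0$ and $z<0$, the Beta weight $(s+\tfrac{1}{2})(1-t)^{s-1/2}$ shifts mass from $t=1$ toward $t=0$ as $s$ grows while $(1-zt)^{-3/2}$ is decreasing on $[0,1]$, so ${}_2F_1(\tfrac{3}{2},1,s+\tfrac{3}{2}\,|\,z)$ is \emph{increasing} in $s$ and bounded \emph{below}, not above, by its $s=0$ value $(1-z)^{-1}$ (its true $z\to -\infty$ asymptotic being $(2s+1)(1-z)^{-1}$). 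The extra $|s|$ upgrades $|s|^{-3/2}$ to $|s|^{-1/2}$ in the hypergeometric term of Proposition~\ref{prop:D_h-preliminary-growth}, which is then not absorbed by~\eqref{eq:simplest-terms-bounds} as written, and the correction must be carried through the later vertical-strip estimates.
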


\begin{proof}
Following~\cite[9.131(1)]{GradshteynRyzhik07} and the Euler integral~\cite[9.111]{GradshteynRyzhik07},
\begin{align} \label{eq:2F1-transform-and-integral}
	{}_2F_1\Big(\begin{matrix} s, s+\frac{1}{2} \\ s+\frac{3}{2} \end{matrix}
					\Big\vert 1-\frac{\ell}{m} \Big)
		&= \Big(\frac{\ell}{m}\Big)^{1-s}
		{}_2F_1\Big(\begin{matrix} \frac{3}{2}, 1 \\ s+\frac{3}{2} \end{matrix}
					\Big\vert 1-\frac{\ell}{m} \Big) \\
	&=  \Big(\frac{\ell}{m}\Big)^{1-s} (s+\tfrac{1}{2})
		\int_0^1 \frac{(1-t)^{s-\frac{1}{2}} \, dt}{(1-(1-\frac{\ell}{m}) t)^{3/2}}
\end{align}
in the region $\Re s > -\frac{1}{2}$. In this form, we recognize that the hypergeometric function at right in~\eqref{eq:2F1-transform-and-integral} is bounded by ${}_2F_1(\frac{3}{2},1,\frac{3}{2} \vert 1 - \frac{\ell}{m})$ when $\Re s > 0$. To conclude, note that ${}_2F_1(\frac{3}{2},1,\frac{3}{2} \vert 1 - \frac{\ell}{m}) = \frac{m}{\ell}$ by~\cite[(9.121)]{GradshteynRyzhik07}.
\end{proof}

\begin{lemma} \label{lem:G_k-growth}
Fix $\epsilon > 0$. In the region $\Re s > 0$, the function $G_{3/2}(s,n,n+\ell)$ defined in~\eqref{eq:I-minus-definition} satisfies
\[
	G_{\frac{3}{2}}(s, n,n+\ell) \ll
	\frac{\vert s \vert^{\Re s - 2 + \epsilon}}{(n+\ell)^{\Re s}}
	e^{-\frac{\pi}{2} \vert \Im s \vert} \Big(\frac{\vert s \vert^{\frac{1}{2}}}{\sqrt{n+\ell}} + \frac{1}{\sqrt{n}} \Big).
\]
\end{lemma}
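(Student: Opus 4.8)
The plan is to estimate $G_{3/2}(s,n,n+\ell)$ directly from its defining integral
\[
	G_{3/2}(s,n,n+\ell) = \int_0^\infty y^{s+\frac{1}{2}} \Gamma(-\tfrac{1}{2}, ny) \Gamma(-\tfrac{1}{2},(n+\ell)y) e^{ny} \frac{dy}{y},
\]
by first rescaling $y \mapsto y/(n+\ell)$ to pull out the overall factor $(n+\ell)^{-\Re s}$ (more precisely $(n+\ell)^{-s-1/2}$, with the $\frac12$ absorbing into the claimed $\sqrt{n+\ell}$ term), so that one is left with an integral of the shape
\[
	\int_0^\infty y^{s+\frac{1}{2}} \Gamma(-\tfrac{1}{2}, \tfrac{n}{n+\ell}y) \Gamma(-\tfrac{1}{2}, y) e^{\frac{n}{n+\ell}y} \frac{dy}{y}.
\]
The key point is that $\Gamma(-\tfrac12,\alpha y)e^{\frac{\alpha}{?}y}$-type products decay like $e^{-(\text{something})y}$ for large $y$ and behave like $\Gamma(-\tfrac12) + O((\alpha y)^{-1/2})$ for small $y$; since $\frac{n}{n+\ell} < 1$, the factor $e^{\frac{n}{n+\ell}y}$ never overwhelms the exponential decay $e^{-y}$ coming from $\Gamma(-\tfrac12,y)$, so the integral converges and the large-$y$ tail contributes $O(1)$ relative to the main region.

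The next step is to exploit oscillation in the $s$-aspect. Writing $s = \sigma + i\tau$, the factor $y^{s}$ oscillates, and the standard device is to integrate by parts (or invoke a Mellin-transform / stationary-phase-type bound): one expects the effective range of integration to be $y \asymp \vert s\vert$ (after the rescaling), because the incomplete gamma factors force $y \lesssim 1$ on one scale while the $y^s$ oscillation and the repeated integration by parts against the smooth incomplete-gamma factors gains powers of $\vert s\vert^{-1}$. Tracking constants, each integration by parts against $\Gamma(-\tfrac12,y)$ and $\Gamma(-\tfrac12,\tfrac{n}{n+\ell}y)e^{\frac{n}{n+\ell}y}$ produces a factor $\vert s\vert^{-1}$ together with derivatives of the incomplete gamma functions, which are elementary: $\frac{d}{dy}\Gamma(\beta,y) = -y^{\beta-1}e^{-y}$. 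Carrying this out to first order and estimating the resulting integral crudely (bounding $\vert y^{s}\vert = y^{\sigma}$ and using $\int_0^\infty y^{\sigma - 1 + j} e^{-cy}\,dy \ll \Gamma(\sigma+j)/c^{\sigma+j} \ll \vert s\vert^{\sigma+j}$ after optimising) should yield the power $\vert s\vert^{\Re s - 2}$, with the $\vert s\vert^\epsilon$ slack absorbing logarithmic losses. The exponential factor $e^{-\frac{\pi}{2}\vert\Im s\vert}$ is the familiar phenomenon that $\vert\Gamma(\sigma+i\tau)\vert \ll \vert\tau\vert^{\sigma-1/2}e^{-\frac{\pi}{2}\vert\tau\vert}$ by Stirling; it enters because, after extracting the oscillatory part, $G_{3/2}$ is essentially a product/quotient of Gamma factors (compare the closed-form evaluations of $I_\ell^+$, $I_\ell^0$, etc.\ which all carry explicit $\Gamma(s+\tfrac12)$, $\Gamma(s-\tfrac32)$ factors) — so one should instead evaluate the $y$-integral in terms of a ${}_2F_1$ or a ratio of Gamma functions (as in~\cite[6.455]{GradshteynRyzhik07}-type formulas) and read off the $\vert s\vert$-dependence and the $e^{-\frac{\pi}{2}\vert\Im s\vert}$ decay from Stirling, rather than fighting the integral by hand.

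Finally, the two terms in the parenthesis $\big(\vert s\vert^{1/2}(n+\ell)^{-1/2} + n^{-1/2}\big)$ reflect the two ``boundary'' contributions: the $n^{-1/2}$ term comes from the near-$y=0$ behavior where $\Gamma(-\tfrac12,ny) \approx \Gamma(-\tfrac12) - \tfrac{2}{(ny)^{1/2}}\cdot(\text{const}) $ contributes the $(ny)^{-1/2}$ piece, producing after the rescaling a factor $n^{-1/2}$; the $\vert s\vert^{1/2}(n+\ell)^{-1/2}$ term is the analogous contribution from the $\Gamma(-\tfrac12,(n+\ell)y)$ factor combined with the effective truncation $y \asymp \vert s\vert / (n+\ell)$. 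In practice I would split the integral at, say, $y = 1/(n+\ell)$ (equivalently $y=1$ after rescaling): on the lower range expand the incomplete gammas by $\Gamma(\beta,y) = \Gamma(\beta) - y^\beta/\beta + O(y^{\beta+1})$ and integrate term-by-term (the constant terms integrate against $y^{s-1/2}$ to give a ratio of Gammas with the stated decay, the $y^{\beta}$ corrections give the $n^{-1/2}$ and $(n+\ell)^{-1/2}$ improvements), and on the upper range use $\Gamma(\beta,y) \ll y^{\beta-1}e^{-y}$ together with $e^{\frac{n}{n+\ell}y} \leq e^{y}$ to see the integrand is $\ll y^{\sigma-3}e^{-y}$, which contributes a negligible amount once $y \gg \vert s\vert$ and is dominated by the main estimate otherwise. \textbf{The main obstacle} I anticipate is getting the exponent $\vert s\vert^{\Re s - 2}$ sharp and the exponential decay $e^{-\frac{\pi}{2}\vert\Im s\vert}$ with the correct constant $\pi/2$ (not some smaller constant): this requires handling the oscillatory integral carefully — either by a clean closed-form evaluation of the $y$-integral as a combination of Gamma/hypergeometric factors followed by Stirling, or by a stationary-phase/repeated-integration-by-parts argument in which one must check that the phase genuinely contributes $\pi/2$ worth of decay and that the derivatives of the incomplete-gamma factors do not reintroduce growth in $\vert s\vert$.
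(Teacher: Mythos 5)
Your qualitative picture of where the various factors should come from is reasonable, but you do not actually prove the bound, and the technique you gesture at is not the one that makes the argument go through. Your first proposed route (rescale, split the integral at a cutoff, expand the incomplete gammas at small $y$, and integrate by parts against $y^s$) has a serious problem: integration by parts against $y^{i\Im s}$ gains factors of $|s|^{-1}$ but does not by itself produce genuine $e^{-\frac{\pi}{2}|\Im s|}$ decay — that exponential is the Stirling decay of a Gamma factor, and you need to actually produce a Gamma factor to see it. You acknowledge this yourself as ``the main obstacle,'' and your fallback suggestion — ``evaluate the $y$-integral in terms of a ${}_2F_1$ or a ratio of Gamma functions (as in~\cite[6.455]{GradshteynRyzhik07}-type formulas)'' — does not apply here: GR~6.455 handles one incomplete gamma against a power and an exponential, but $G_{3/2}$ has \emph{two} incomplete gamma factors, and there is no clean closed form for that combination.

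The missing idea is to write the product $\Gamma(1-k,ny)e^{ny}$ as a Mellin--Barnes contour integral via~\cite[(8.6.12)]{DLMF},
\begin{equation}
\Gamma(1-k, ny) e^{ny} = - \frac{(ny)^{-k}}{\Gamma(k)} \cdot \frac{\pi}{2\pi i} \int_C \frac{\Gamma(w+k) (ny)^{-w}}{\sin(\pi w)} \,dw,
\end{equation}
then swap the $y$- and $w$-integrals and apply the known Mellin transform of the remaining $\Gamma(1-k,(n+\ell)y)$~\cite[(8.14.4)]{DLMF}. This reduces $G_{3/2}(s,n,n+\ell)$ to a single $w$-contour integral whose integrand is a ratio of Gamma functions times $n^{-k-w}(n+\ell)^{1-s+w}(s-w-1)^{-1}\csc(\pi w)$. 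The $e^{-\frac{\pi}{2}|\Im s|}$ and $|s|^{\Re s - 2}$ factors then drop out of Stirling on that integrand once the contour is shifted to $\Re w = -\frac{3}{2}-\epsilon$ (picking up a single residue at $w=-1$, which contributes the $n^{-1/2}(n+\ell)^{-\Re s}|s|^{\Re s-2}e^{-\frac{\pi}{2}|\Im s|}$ term) and truncated to $|\Im w| \le \frac{1}{2}|\Im s|$ (where $e^{-\frac{\pi}{2}|\Im(s-w)|} \ll e^{\frac{\pi}{2}|\Im w| - \frac{\pi}{2}|\Im s|}$). This is the step your proposal never reaches: without the Mellin--Barnes representation there is no mechanism to turn the two-incomplete-gamma integral into Gamma factors with controlled Stirling decay, and your integration-by-parts route would require substantial additional work, with no clear path to the sharp constant $\pi/2$, before it could be called a proof.
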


\begin{proof} We begin with the contour integral representation~\cite[(8.6.12)]{DLMF}
\begin{align} \label{eq:incomplete-gamma-integral-representation}
	\Gamma(1-k, y) e^y =
		- \frac{y^{-k}}{\Gamma(k)} \cdot \frac{\pi}{2\pi i}
		\int_C \frac{\Gamma(w+k) y^{-w}}{\sin(\pi w)} \,dw,
\end{align}
where $C$ is a contour separating the poles of $\Gamma(w+k)$ from those at $w=0,1,\ldots$ arising from $1/\sin(\pi w)$. Here we require $k \notin -\mathbb{N}$. For $k> 0$, we may take $C$ as a vertical line with $\Re w = - \epsilon$. We apply~\eqref{eq:incomplete-gamma-integral-representation} and the Mellin transform~\cite[(8.14.4)]{DLMF} to $G_k(s,n,n+\ell)$ to write
\begin{align} 
	G_k &= -\frac{\pi n^{-k}}{\Gamma(k)} \frac{1}{2\pi i} \int_{(-\epsilon)}
		\frac{\Gamma(w+k)}{\sin(\pi w) n^{w}}
			\Big(\int_0^\infty y^{s-1-w} \Gamma(1-k, (n+\ell)y) \frac{dy}{y}\Big)  dw \\
			\label{eq:G_k-integral-representation}
	&=
		-\frac{\pi}{\Gamma(k)} \frac{1}{2\pi i} \int_{(-\epsilon)}
			\frac{\Gamma(w+k)\Gamma(s-w-k) \csc(\pi w)}{ n^{k+w} (n+\ell)^{s-w-1} (s-w-1)} dw,
\end{align}
provided $\Re s > \max(1,k)$ to begin. Shifting the contour of integration to $\Re w = - \max(1,k) - \epsilon$ passes finitely many poles from $\csc(\pi w)$ and gives a meromorphic continuation of $G_k$ to $\Re s > 0$ when $k>0$.

We now specialize to $k=\frac{3}{2}$. The contour shift in~\eqref{eq:G_k-integral-representation} to $\Re w = -\frac{3}{2} - \epsilon$ passes a single pole at $w=-1$, with residue 
\[
	\frac{2 \Gamma(s-\frac{1}{2})}{\sqrt{n} (n+\ell)^s s} 
	\ll \frac{1}{\sqrt{n} (n+ \ell)^{\Re s }} \vert s \vert^{\Re s -2} e^{-\frac{\pi}{2} \vert \Im s \vert}.
\]
Stirling shows that the integrand decays exponentially in $\vert \Im w \vert$, for any $s$. We may therefore truncate the integral to $\vert \Im w \vert \leq \frac{1}{2} \vert \Im s \vert$. In this range, the estimates $\vert s - w - k \vert \asymp \vert s \vert$ and $e^{-\frac{\pi}{2} \vert \Im (s-w) \vert} \ll e^{\frac{\pi}{2} \vert \Im w \vert - \frac{\pi}{2} \vert \Im s \vert}$ allow us to extract the $s$-dependence of the integrand. Hence the shifted integral~\eqref{eq:G_k-integral-representation} is $O( (n+\ell)^{-\Re s - \frac{1}{2}} \vert s \vert^{\Re s - \frac{3}{2}+\epsilon} e^{-\frac{\pi}{2} \vert \Im s \vert})$, which completes the proof.
\end{proof}

\begin{proof}[Proof of Proposition~\ref{prop:D_h-preliminary-growth}]
Lemma~\ref{lem:H-growth} and the divisor estimates $\sigma_{-2}(\ell) \ll 1$ and $\sigma_{-1}(\ell) \ll \ell^\epsilon$ imply that the terms at right in the first four lines of~\eqref{eq:D_h-decomposition-full} (excluding the term containing $\langle \mathcal{V}_\mathcal{H}, P_\ell(\cdot, \overline{s}) \rangle$) are
\begin{align} \label{eq:simplest-terms-bounds}
	O_{\Re s, \epsilon}
		\big( \ell^{\frac{3}{2}-\Re s} \vert s \vert^{-\frac{3}{2}}
		+ \ell^{1-\Re s + \epsilon} \vert s \vert^{-1}
		+ \ell^{\frac{1}{2} - \Re s + \epsilon} \vert s \vert^{-\frac{1}{2}}
		\big).
\end{align}
By factoring this upper bound in the form $\ell^{\frac{1}{2} - \Re s + \epsilon} \vert s \vert^{-\frac{3}{2}}( \ell + \ell^{\frac{1}{2}} \vert s \vert^{\frac{1}{2}} + \vert s \vert)$, we observe that the second summand is always dominated by the first or third term, and may be ignored.

It remains to estimate the three terms in the last two lines of~\eqref{eq:D_h-decomposition-full}. 
We first consider the divisor sum. In the right half-plane $\Re s > \frac{1}{2}$, we bound $\vert d + \ell /d \vert^{1-2\Re s} \ll \ell^{\frac{1}{2}-\Re s}$, so the divisor sum is $O( \ell^{\frac{1}{2} - \Re s + \epsilon} \vert s \vert^{-1})$, which is non-dominant. Otherwise, if $\Re s < \frac{1}{2}$, we bound $\vert d + \ell/d \vert^{1-2\Re s} \ll \ell^{1-2\Re s}$, so the full divisor sum is $O(\ell^{1-2\Re s + \epsilon} \vert s \vert^{-1})$. This term is dominated by the second term of~\eqref{eq:simplest-terms-bounds} when $\Re s > 0$.

We next consider the contribution of the hypergeometric term in~\eqref{eq:D_h-decomposition-full}. By Lemma~\ref{lem:hypergeometric-growth}, Stirling's formula, and then Lemma~\ref{lem:H-growth}, this term is
\begin{align*}
	&\ll_{\Re s} \ell^{-\Re s} \vert s \vert^{-\frac{3}{2}} \sum_{m=1}^{\ell-1} H(\ell-m) r_1(m)
	\ll_{\Re s,\epsilon} \ell^{1- \Re s + \epsilon} \vert s \vert^{-\frac{3}{2}}
\end{align*}
in the region $\Re s > 0$. Note that this term is dominated by the first error term in~\eqref{eq:simplest-terms-bounds}. 

Finally, we consider the term in~\eqref{eq:D_h-decomposition-full} involving $G_{3/2}(s,m_2^2,m_1^2)$. By Lemma~\ref{lem:G_k-growth}, this term is
\begin{align} \label{eq:G_k-application-bound}
	O_{\Re s} \Big(\vert s \vert^{-2+\epsilon} \sum_{m_1^2 - m_2^2 = \ell} \frac{m_1 m_2}{m_1^{2\Re s}}
	\Big( \frac{\vert s \vert^{\frac{1}{2}}}{m_1} + \frac{1}{m_2} \Big) \Big)
\end{align}
in the region $\Re s > 0$. The contribution of $1/m_2$ in the parenthetical is 
\[
	\ll_{\Re s}
	\vert s \vert^{-2+\epsilon} \sum_{m_1^2 - m_2^2 = \ell} \frac{1}{m_1^{2\Re s -1}}
	\ll_{\Re s} \vert s \vert^{-2+\epsilon} \ell^\epsilon \big(\ell^{\frac{1}{2} - \Re s} + \ell^{1-2\Re s} \big),
\]
in which we've used that $\sqrt{\ell} \leq m_1 \leq \ell$ and that the sum has at most $d(\ell)$ terms. Since $m_1 \geq m_2$, the contribution of the other term in the parenthetical of~\eqref{eq:G_k-application-bound} is at most $\vert s \vert^{1/2}$ times larger. Both upper bounds are majorized by the contribution of the divisor sum in~\eqref{eq:D_h-decomposition-full}.
\end{proof}


\section{Non-Cuspidal Spectral Inner Products} \label{sec:jutila-triple-inner-products}

To bound $\langle \mathcal{V}_\mathcal{H}, P_\ell(\cdot, \overline{s}) \rangle$ in vertical strips, we apply the spectral expansion $\langle \mathcal{V}_\mathcal{H}, P_\ell(\cdot, \overline{s}) \rangle = \Sigma_{\mathrm{disc}}(s) + \Sigma_{\mathrm{cont}}(s)$ computed in section~\S{\ref{sec:spectral-expansion}}. 
In the discrete spectrum, Stirling's approximation, dyadic subdivision, Cauchy--Schwarz, and Lemma~\ref{lem:Kuznetsov-long} reduce our task to bounding the inner products $\langle \mathcal{V}_\mathcal{H}, \mu_j \rangle$.
Since Maass cusp forms are orthogonal to Eisenstein series and to norm-squares of theta functions (cf.~\cite[Remark 2]{Nelson21}), this is equivalent to bounding the unregularized inner products $\langle y^{3/2} \vert \mathcal{H} \vert^2 , \mu_j \rangle$.

While good estimates for inner products of the form $\langle y^k \vert f \vert^2 , \mu_j \rangle$ are known when $f$ is a holomorphic cusp form or Maass cusp form (at least on average), the non-cuspidal nature of $\mathcal{H}$ meters the applicability of prior results. Fortunately, it is possible to modify work of Jutila~\cite{Jutila96,Jutila97} in the Maass cusp form case to address the case of harmonic Maass forms. 
Working in a somewhat general setting, we prove the following theorem.

\begin{theorem} \label{thm:triple-inner-product-bound}
Fix $f \in H_k^\sharp(\Gamma_0(N))$ with $k \in \frac{1}{2} + \mathbb{Z}$. Let $\mu_j(z)$ be an $L^2$-normalized Hecke--Maass cusp form of weight $0$ on $\Gamma_0(N)$, with spectral type $t_j \in \mathbb{R}$. For all $\epsilon > 0$, we have
\[
	\langle y^k \vert f \vert^2, \mu_j \rangle 
		\ll 
		\big( \vert t_j \vert^{2k-1 + \epsilon} + \vert t_j \vert^{3-2k + \epsilon} \big)
		e^{-\frac{\pi}{2} \vert t_j \vert}.
\]
\end{theorem}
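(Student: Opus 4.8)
The plan is to adapt the \emph{modified Rankin--Selberg method} of Jutila~\cite{Jutila96,Jutila97}. Since $y^k\vert f\vert^2$ is $\Gamma_0(N)$-invariant of weight $0$ but not square-integrable, while $\mu_j$ is a cusp form, the product $W(z) := y^k\vert f(z)\vert^2\,\overline{\mu_j(z)}$ is $\Gamma_0(N)$-invariant and of rapid decay at every cusp. Since $\Res_{s=1}E_\infty(z,s) = V_N^{-1}$ is a nonzero constant, unfolding $E_\infty(z,s)$ against $W$ exactly as in~\eqref{eq:Poincare-unfolding} gives
\[
	\langle y^k\vert f\vert^2, \mu_j\rangle
	= V_N\cdot\Res_{s=1}\langle W, E_\infty(\cdot,\overline{s})\rangle
	= V_N\cdot\Res_{s=1}\int_0^\infty W^{(0)}(y)\,y^{s-2}\,dy,
\]
where $W^{(0)}(y)$ is the zeroth Fourier coefficient of $W$ at the cusp $\infty$. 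Here the auxiliary Eisenstein series serves only to perform the unfolding; its pole at $s=1$ returns the inner product of interest, so the task reduces to producing this residue together with bounds for its dependence on $t_j$.

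Next I would substitute the Fourier expansions~\eqref{eq:harmonic-Fourier-expansion} of $f$ and~\eqref{eq:Maass-form-Fourier-expansion} of $\mu_j$ at $\infty$ and integrate in $x$, which writes $W^{(0)}(y) = y^{k+\frac12}\sum_{n\neq 0}\overline{\rho_j(n)}\,K_{it_j}(2\pi\vert n\vert y)\sum_{a-b=n}c(a,y)\overline{c(b,y)}$, and then split $c(a,y)\overline{c(b,y)}$ into the holomorphic--holomorphic part (pure exponentials $c^+(a)\overline{c^+(b)}e^{-2\pi(a+b)y}$), the parts involving the non-holomorphic coefficients $c^-(n)\Gamma(1-k,4\pi\vert n\vert y)$, and the constant-term parts with $a$ or $b$ equal to $0$; the last, built from $c^\pm(0)$, are exactly the pieces that are absent when $f$ is cuspidal. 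For the holomorphic part the relevant $y$-integral is $\int_0^\infty y^{s+k-\frac12}e^{-2\pi(a+b)y}K_{it_j}(2\pi\vert n\vert y)\,\tfrac{dy}{y}$, which~\cite[6.621(3)]{GradshteynRyzhik07} evaluates as $\tfrac{\Gamma(s+k-\frac12+it_j)\,\Gamma(s+k-\frac12-it_j)}{\Gamma(s+k)}$ times a power of $(a+b+\vert n\vert)$ times a ${}_2F_1$ of argument $\tfrac{b}{b+\vert n\vert}\in[0,1)$; the non-holomorphic and cross terms produce analogous integrals once the incomplete gamma functions are replaced by the Mellin--Barnes representation~\eqref{eq:incomplete-gamma-integral-representation} already used in Lemma~\ref{lem:G_k-growth}, and there the role of $k$ is played by $2-k$, the weight of the shadow $\xi_k f$. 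Summing over $a,b$ with $a-b=n$ assembles these contributions into shifted-convolution Dirichlet series in $s$ whose Rankin--Selberg-type meromorphic continuations have a pole at $s=1$; extracting that residue and summing over $n$ gives a closed expression for $\langle y^k\vert f\vert^2,\mu_j\rangle$.

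The $t_j$-dependence is then extracted from this expression. The exponential $e^{-\frac{\pi}{2}\vert t_j\vert}$ originates in the uniform asymptotics of $K_{it_j}$ --- equivalently, in $\vert\Gamma(\sigma+it_j)\vert\asymp\vert t_j\vert^{\sigma-\frac12}e^{-\frac{\pi}{2}\vert t_j\vert}$ after the exponential growth of the ${}_2F_1$'s in their imaginary parameters is cancelled. Bounding the $y$-integrals uniformly in $t_j$ and in $a,b,n$, then applying Cauchy--Schwarz to the $n$-sum with the spectral average $\sum_{\vert n\vert\le M}\vert\rho_j(n)\vert^2\ll_N(M+\vert t_j\vert)e^{\pi\vert t_j\vert}$ of Lemma~\ref{lem:Maass-form-coefficient-average} and the on-average coefficient bounds for $\{c^\pm(n)\}$ of Lemma~\ref{lem:Muller-coefficient-averages}, turns the remaining explicit quantities into a polynomial factor; the contribution governed by the weight-$k$ holomorphic coefficients works out to $\vert t_j\vert^{2k-1+\epsilon}$, that governed by the weight-$(2-k)$ shadow to $\vert t_j\vert^{3-2k+\epsilon}$, and the surviving exponential factors $e^{-\pi\vert t_j\vert}$ and $e^{\frac{\pi}{2}\vert t_j\vert}$ combine to $e^{-\frac{\pi}{2}\vert t_j\vert}$, giving the stated bound.

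I expect the principal obstacle to be the uniform bounds for the special functions --- the $K$-Bessel functions, the ${}_2F_1$'s, and the Mellin--Barnes integrals --- across the whole range of $t_j$ and of the summation variables, particularly near the turning point $\vert n\vert\asymp\vert t_j\vert$ of $K_{it_j}$ and where the hypergeometric argument approaches $1$, together with the matching verification that the tails of the $n$-, $a$- and $b$-sums are negligible and that the interchange of summation with the residue is justified. The half-integral weight forces this Whittaker-function analysis through incomplete gamma functions rather than the $K$-Bessel functions of Jutila's original argument, and the non-cuspidality of $f$ contributes the genuinely new constant-term pieces built from $c^\pm(0)$, each of which must be shown to respect the stated bound; by contrast the passage from $\Gamma_0(1)$ to $\Gamma_0(N)$ requires only the Fourier expansion of $\mu_j$ at the relevant cusp and the level-$N$ form of the residue identity above, and is comparatively routine.
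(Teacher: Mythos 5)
Your plan correctly identifies the starting point of the paper's argument --- that $\langle y^k|f|^2,\mu_j\rangle = V_N\,\Res_{s=1} R(\phi_j,s)$, where $R(\phi_j,s)$ is the Rankin--Selberg transform of $\phi_j = y^k|f|^2\overline{\mu_j}$, obtained by unfolding an auxiliary Eisenstein series --- and it also correctly identifies the inputs one needs downstream (Fourier expansions, the Mellin--Barnes representation of the incomplete gamma, Lemmas~\ref{lem:Muller-coefficient-averages} and~\ref{lem:Maass-form-coefficient-average}, the exponential-factor bookkeeping). But it misses the central mechanism that makes the residue computation feasible, and the route you substitute for it would not work.

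You propose to extract $\Res_{s=1} R(\phi_j,s)$ by substituting Fourier expansions, reorganizing into shifted-convolution Dirichlet series in $s$, meromorphically continuing those series, and then reading off the residue. This is circular: meromorphically continuing shifted-convolution series of the type $\sum_{a-b=n} c^{\pm}(a)\overline{c^{\pm}(b)}(\cdots)^{-s}$ with uniform control in $n$ and $t_j$ near $s=1$ is precisely the kind of problem the paper is set up to address, and it is not accomplished term-by-term; the unfolded transform $R(\phi_j,s)=\int_0^\infty W^{(0)}(y)y^{s-2}\,dy$ converges only for $\Re s$ sufficiently large, so nothing about the residue at $s=1$ can be read off from the Dirichlet series directly. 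The paper's proof avoids this entirely via Jutila's device: one introduces a rapidly decaying holomorphic test function $g$ with $g(1)=1$, writes $\Res_{s=1}\zeta^*(2s)R(\phi_j,s)$ as a contour integral of $g(s)\zeta^*(2s)R(\phi_j,s)/(s-1)$ around $s=1$, deforms the contour to the two lines $\Re s = a$ and $\Re s = 1-a$ with $a\gg 1$, and then uses the functional equation of the Eisenstein series (cf.~\eqref{eq:Eisenstein-functional-equation}) to fold the left line over to the right line --- at the cost of introducing the Rankin--Selberg transforms $R(\phi_{j\mathfrak a},s)$ at the \emph{other} cusps of $\Gamma_0(N)$. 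The resulting identity~\eqref{eq:inner-product-vs-Rankin-Selberg-transform} expresses $\langle F,\mu_j\rangle$ as an integral over $\Re s = a\gg 1$ only, where every Dirichlet series converges absolutely. Only then is it safe to truncate the $(m,n)$-sums (negligibly, because $\Re s$ is large), after which the finite truncations $J^{\pm}_{j\mathfrak a}, J^\times_{j\mathfrak a}, J^0_{j\mathfrak a}$ are entire and the contour may be pushed back to $\Re s = \delta$ for the final estimation. Your plan has no analogue of the test function $g$, the Eisenstein functional equation, or the truncation step, all of which are essential; without them the $\lim_{s\to 1^+}$ step has no rigorous continuation.

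Two further points. First, your plan works only at the cusp $\infty$, but the functional-equation step necessarily introduces Fourier expansions of both $f$ and $\mu_j$ at every cusp of $\Gamma_0(N)$; the claim that the passage from $\Gamma_0(1)$ to $\Gamma_0(N)$ is "comparatively routine" understates what happens here, since one needs the full scattering matrix and coefficient estimates such as~\eqref{eq:Eisenstein-Fourier-coefficient-bound}, and the cusps with nontrivial $\varkappa_\mathfrak a$ remove the constant-term pieces entirely while the singular cusps contribute the genuinely new $I^0_{j\mathfrak a}$ terms. Second, your heuristic attributing $|t_j|^{2k-1}$ to the holomorphic part and $|t_j|^{3-2k}$ to the shadow is a reasonable guiding intuition but does not match how the bounds actually arise: in the paper both exponents come from the cross-terms and constant-term pieces $J^\times_{j\mathfrak a}$ and $J^0_{j\mathfrak a}$, not from $J^+_{j\mathfrak a}$ and $J^-_{j\mathfrak a}$, and those terms exist only because $f$ is non-cuspidal.
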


Our proof of Theorem~\ref{thm:triple-inner-product-bound} follows the general method of~\cite{Jutila96,Jutila97}. Very roughly, this plan involves two steps:
\begin{enumerate}
	\item[a.] We relate $\langle y^k \vert f \vert^2, \mu_j \rangle$, which is an integral over $\Gamma_0(N) \backslash \mathfrak{h}$, to an `unfolded' integral over $\Gamma_\infty \backslash \mathfrak{h}$, by introducing an Eisenstein series as an unfolding object. This technique was developed in~\cite[\S{2}]{Jutila96} for $f$ a level $1$ holomorphic or Maass cusp form, and we adapt it to the case of $f \in H_k^\sharp(\Gamma_0(N))$.
	\item[b.] The unfolded integral can be understood as an integral transform of a sum involving Fourier coefficients of $f$ and $\mu_j$ at various cusps. We truncate the sums and integrals and apply estimates for the Fourier coefficients of $f$ and $\mu_j$ to bound the truncations.
\end{enumerate}
We remark that~\cite{Jutila97} also applies the spectral large sieve, to produce a fairly sharp upper bound for the spectral average $\sum_{\vert t_j \vert \sim T} \vert \langle y^k \vert f \vert^2, \mu_j \rangle \vert^2 e^{\pi \vert t_j \vert}$. The spectral large sieve seems unlikely to give quantitative improvements in the non-cuspidal case, so we focus on point-wise bounds in this section.

Though not the main focus of this work, we remark that Theorem~\ref{thm:triple-inner-product-bound} has applications to modular forms of half-integral weight, since $M_k(\Gamma_0(N)) \subset H_k^\sharp(\Gamma_0(N))$. For convenient reference, we present this as a corollary.

\begin{corollary} \label{cor:modular-form-specialization}
Fix $k \in \frac{1}{2} + \mathbb{Z}$ and $f \in M_k(\Gamma_0(N))$. Let $\mu_j(z)$ be an $L^2$-normalized Hecke--Maass cusp form of weight $0$ on $\Gamma_0(N)$, with spectral type $t_j \in \mathbb{R}$. For all $\epsilon > 0$, we have
\[
	\langle y^k \vert f \vert^2, \mu_j \rangle 
		\ll \big(\vert t_j \vert^{2k-1 + \epsilon} + \vert t_j \vert^{1+\epsilon}\big)
		e^{-\frac{\pi}{2} \vert t_j \vert}.
\]
\end{corollary}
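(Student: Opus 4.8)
The plan is to deduce Corollary~\ref{cor:modular-form-specialization} from Theorem~\ref{thm:triple-inner-product-bound} by tracking, inside the proof of that theorem, precisely which contributions are responsible for the secondary exponent $3-2k$. Since $M_k(\Gamma_0(N)) \subset H_k^\sharp(\Gamma_0(N))$, the theorem already yields $\langle y^k \vert f\vert^2,\mu_j\rangle \ll (\vert t_j\vert^{2k-1+\epsilon}+\vert t_j\vert^{3-2k+\epsilon})e^{-\frac{\pi}{2}\vert t_j\vert}$, and for $k\ge\tfrac32$ the exponent $3-2k$ does not exceed $2k-1$, so the corollary is immediate in that range; the genuine content is concentrated at $k=\tfrac12$, where $3-2k=2>1$. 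The structural input is that a holomorphic modular form has trivial non-holomorphic part: for $f\in M_k(\Gamma_0(N))$ the shadow vanishes, $\xi_k f=0$, so by~\eqref{eq:shadow-definition} and its analogue at every cusp all the coefficients $c_\mathfrak{a}^-(n)$ in the expansion~\eqref{eq:harmonic-Fourier-expansion-general} are zero, including $c_\mathfrak{a}^-(0)$. In particular $\Im(\sigma_\mathfrak{a} z)^k\vert f(\sigma_\mathfrak{a} z)\vert^2 = \vert c_\mathfrak{a}^+(0)\vert^2 y^k + O(e^{-cy})$ near each cusp, with no $y^1$ or $y^{2-k}$ term.

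Next I would revisit the two-step scheme behind Theorem~\ref{thm:triple-inner-product-bound}. In the unfolding step, $\langle y^k\vert f\vert^2,\mu_j\rangle$ is expressed, via an auxiliary Eisenstein series, as a transform of a bilinear sum over the Fourier data of $f$ and $\mu_j$ at the cusps; in the bounding step this sum splits according to which Whittaker functions occur. The contribution of pairs of positive-index holomorphic coefficients $c_\mathfrak{a}^+(m)$ ($m\ge 1$), paired against coefficients of $\mu_j$ through $K$-Bessel integrals, produces the principal bound $\vert t_j\vert^{2k-1+\epsilon}e^{-\frac{\pi}{2}\vert t_j\vert}$; the contribution in which one index is the constant term $c_\mathfrak{a}^+(0)$ produces a term of size $\vert t_j\vert^{1+\epsilon}e^{-\frac{\pi}{2}\vert t_j\vert}$; and the contribution involving the non-holomorphic data --- the $y^{1-k}$ growth carried by $c_\mathfrak{a}^-(0)$ together with the incomplete-gamma Whittaker terms attached to $c_\mathfrak{a}^-(m)$ for $m\ge1$ --- is exactly what produces the exponent $3-2k$ (in the general theorem this term always dominates $\vert t_j\vert^1$, which is why it is the one recorded). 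For $f\in M_k(\Gamma_0(N))$ the non-holomorphic group of terms is identically zero, so only the first two survive, and summing them gives $\langle y^k\vert f\vert^2,\mu_j\rangle \ll (\vert t_j\vert^{2k-1+\epsilon}+\vert t_j\vert^{1+\epsilon})e^{-\frac{\pi}{2}\vert t_j\vert}$, as claimed. One also notes that, against the cuspidal $\mu_j$, no automorphic regularization of $y^k\vert f\vert^2$ is needed, since the exponential decay of $\mu_j$ absorbs the polynomial cusp growth.

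The main obstacle --- really the only point requiring care --- is the bookkeeping: one must verify that every appearance of the exponent $3-2k$ in the proof of Theorem~\ref{thm:triple-inner-product-bound} is genuinely attached to a factor coming from some $c_\mathfrak{a}^-(\cdot)$, and that, once those terms are deleted, the residual constant-term contribution never exceeds $\vert t_j\vert^{1+\epsilon}e^{-\frac{\pi}{2}\vert t_j\vert}$. As noted, this is substantive only in small weight; for $k\ge\tfrac32$ the inequality $\vert t_j\vert^{3-2k}\le\vert t_j\vert^{2k-1}$ makes the corollary a formal consequence of the theorem. We remark that the holomorphic \emph{cuspidal} case is already implicit in~\cite{Good81} (there even the $\vert t_j\vert^1$ term is absent), so the refinement here amounts to handling the non-cuspidal constant term of $f$.
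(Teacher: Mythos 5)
Your proof is essentially correct and matches what the paper leaves implicit: since $M_k \subset H_k^\sharp$, one re-examines the proof of Theorem~\ref{thm:triple-inner-product-bound}, observes that $\xi_k f = 0$ forces every $c_\mathfrak{a}^-(n)$ to vanish, and hence $I_{j\mathfrak{a}}^-$, $I_{j\mathfrak{a}}^\times$, and all but the first summand of $I_{j\mathfrak{a}}^0$ in~\eqref{eq:I-0-definition} drop out entirely; the surviving $J_{j\mathfrak{a}}^+$ (bound~\eqref{eq:J-plus-upper-bound}) and the first term of $J_{j\mathfrak{a}}^0$ (from the proof of~\eqref{eq:J-0-upper-bound}) are each $\ll \vert t_j \vert^{k+\vert k-1 \vert + O(\delta)} e^{-\frac{\pi}{2}\vert t_j\vert + \frac{\pi}{2}\vert \Im s\vert}$, and $\vert t_j \vert^{k+\vert k-1\vert} = \max(\vert t_j \vert^{2k-1}, \vert t_j\vert)$, which gives the stated bound. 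One small imprecision in your narration: it is not accurate to attribute $\vert t_j\vert^{2k-1}$ to the positive-index correlation and $\vert t_j\vert^1$ to the constant-term correlation --- both $J_{j\mathfrak{a}}^+$ and the surviving piece of $J_{j\mathfrak{a}}^0$ produce the same exponent $k+\vert k-1\vert$ (so for $k=\tfrac12$ the pure-cuspidal pairing already yields $\vert t_j\vert^1$, not $\vert t_j\vert^{0}$, and for $k\ge\tfrac32$ the constant-term piece already yields $\vert t_j\vert^{2k-1}$, not $\vert t_j\vert^1$); likewise the exponent $3-2k$ in the theorem traces specifically to $J_{j\mathfrak{a}}^\times$ via~\eqref{eq:J-cross-upper-bound}, not to $J_{j\mathfrak{a}}^-$ or the $c^-(0)$ portion of $J_{j\mathfrak{a}}^0$ (those give $\vert t_j\vert^{1-2k}$, which is smaller). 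These misattributions do not affect the validity of the argument, since they cancel out once you take the maximum, and your overall strategy --- delete the $c^-$ contributions, then read off the surviving bounds --- is exactly the correct one.
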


We remark that Corollary~\ref{cor:modular-form-specialization} improves certain technical results in~\cite{Kiral15}. In particular, we improve the $t_j$-dependence of~\cite[Proposition 14]{Kiral15} in any case that our result applies.

\subsection{Jutila's Extension of the Rankin--Selberg Method}

The material in this section adapts~\cite[\S{2}]{Jutila96} from $\mathrm{SL}(2,\mathbb{Z})$ to $\Gamma_0(N)$. Let $\phi(z)$ be an $L^2$ function on $\Gamma_0(N) \backslash \mathfrak{h}$ and let $E_\infty(z,s)$ denote the weight $0$ Eisenstein series at the cusp $\infty$ of $\Gamma_0(N)$. By multiplying and dividing by $E_\infty(z,s)$ and unfolding, we find
\[
		\iint_{\Gamma_0(N) \backslash \mathfrak{h}} \phi(z) \frac{dxdy}{y^2}
			= \int_0^\infty \int_0^1 \frac{\phi(z) y^s}{E_\infty(z,s)} \frac{dxdy}{y^2},
\]
at first for $\Re s > 1$. Since $E_\infty(z,s)$ has a simple pole at $s=1$ with residue $\frac{3}{\pi} \cdot [\Gamma_0(N) : \mathrm{SL}(2,\mathbb{Z}) ]^{-1} = V_N^{-1}$, we can remove the denominator $E_\infty(z,s)$ by letting $s \to 1^+$ to produce
\begin{align} \label{eq:unnatural-unfolding}
	 \iint_{\Gamma_0(N) \backslash \mathfrak{h}} \phi(z) \frac{dxdy}{y^2}
	&\phantom{:}= V_N \lim_{s \to 1^+} (s-1) R(\phi, s); \\
	R(\phi, s) &:= \int_0^\infty\int_0^1 \phi(z) y^{s-1} \frac{dxdy}{y},
\end{align}
in which $R(\phi, s)$ is the typical Rankin--Selberg transform of $\phi$.

We define $R^*(\phi,s) = \zeta^*(2s) R(\phi,s)$ and $R^*_0(\phi,s) = s(s-1)R^*(\phi,s)$, so that~\eqref{eq:unnatural-unfolding} equals $\frac{\pi}{6} V_N \Res_{s=1} R^*(\phi ,s) = \frac{\pi}{6} V_N R^*_0(\phi,1)$. Note that $R_0^*(\phi,s)$ is entire, in part because $\phi \in L^2$. By the residue theorem,
\[
	R_0^*(\phi,1) = \frac{1}{2\pi i} \int_\mathcal{O} g(s) \frac{R_0^*(\phi, s)}{s-1} ds,
\]
in which $\mathcal{O}$ is a contour encircling $s=1$ once counterclockwise and $g(s)$ is a rapidly decaying holomorphic function satisfying $g(1)=1$. We bend $\mathcal{O}$ into a rectangle connecting $a \pm i T$ and $1-a \pm iT$, then let $T \to \infty$ and use decay in $g$ to render the horizontal components of $\mathcal{O}$ negligible. It follows that
\begin{align} 
\begin{split} \label{eq:Rankin-Selberg-two-lines}
	R_0^*(\phi, 1)
		&= \frac{1}{2\pi i} \int_{(a)} g(s) \frac{R_0^*(\phi, s)}{s-1} ds
			- \frac{1}{2\pi i} \int_{(1-a)} g(s) \frac{R_0^*(\phi, s)}{s-1} ds \\
		&= \frac{1}{2\pi i} \int_{(a)} 
			\Big(\frac{g(s)}{s-1}R_0^*(\phi, s) + \frac{g(1-s)}{s} R_0^*(\phi, 1-s) \Big) ds.
\end{split}
\end{align}

We now apply the functional equation of the Eisenstein series on $\Gamma_0(N)$ to relate $R^*(\phi, 1-s)$ to a sum of Rankin--Selberg transforms at the other cusps of $\Gamma_0(N)$. This takes the form
\begin{align} \label{eq:Eisenstein-functional-equation}
	R^*(\phi, 1-s ) = \sum_\mathfrak{a} \gamma_{\mathfrak{a}}(s)  R^*(\phi_\mathfrak{a}, s),
\end{align}
in which $\gamma_\mathfrak{a}(s)$ is an entry of the scattering matrix for $\Gamma_0(N)$ and $\phi_\mathfrak{a} = \phi\vert_{\sigma_{\mathfrak{a}}}$ under the weight $0$ slash operator. Exact formulas for $\gamma_\mathfrak{a}$ may be obtained by combining~\cite[Theorem 6.1]{Young19} and~\cite[Proposition 4.2]{Young19}. We have $\gamma_\mathfrak{a}(s) = O(1)$ in fixed vertical strips away from poles. By applying~\eqref{eq:Eisenstein-functional-equation} to~\eqref{eq:Rankin-Selberg-two-lines}, we conclude that
\[
	R_0^*(\phi, 1)
	= \frac{1}{2\pi i} \int_{(a)} \Big(\frac{g(s) R_0^*(\phi, s)}{s-1} 
			+ \frac{g(1-s)}{s} \sum_\mathfrak{a} \gamma_\mathfrak{a}(s)  R_0^*(\phi_\mathfrak{a}, s) \Big) ds.
\]

In our application, we take $\phi(z) = \phi_j(z) = y^k \vert f(z) \vert^2 \overline{\mu_j(z)}$, where $\mu_j$ is a Maass cusp form on $\Gamma_0(N)$ with $\Vert \mu_j \Vert = 1$. We conclude that
\begin{align} \label{eq:inner-product-vs-Rankin-Selberg-transform}
	\langle F, \mu_j \rangle
		= \frac{V_N}{12 i} \sum_\mathfrak{a}  \int_{(a)} 
			\Big( \delta_{[\mathfrak{a}=\infty]}s g(s) + (s-1) & g(1-s) \gamma_\mathfrak{a}(s)\Big) \\
				& \times \zeta^*(2s) R(\phi_{j\mathfrak{a}}, s) ds,
\end{align}
which generalizes~\cite[(2.10)]{Jutila96}. Note that this expression lets us determine $\langle F, \mu_j \rangle$ while only sampling $R(\phi_{j\mathfrak{a}},s)$ on the line $\Re s = a \gg 1$. We also note that the pole of $\zeta^*(2s)$ at $s= \frac{1}{2}$ is canceled by $R(\phi_{j\mathfrak{a}},s)$, hence the only poles of the integrand in $\Re s > 0$ are those of $R(\phi_{j\mathfrak{a}},s)$.

\begin{remark}
Following~\cite[(2.8)]{Jutila96}, we take $g(s) = \mathrm{exp}(1- \cos(\frac{s-1}{B}))$, for some large $B>0$. This choice implies $\vert g(s) \vert \ll \mathrm{exp}(-\frac{1}{2} \mathrm{exp}(\vert \Im s \vert / B))$ in the vertical strip $\vert \Re s - 1 \vert \leq \frac{\pi B}{3}$. In particular, the contour integral~\eqref{eq:inner-product-vs-Rankin-Selberg-transform} converges if $R(\phi_{j\mathfrak{a}},s)$ grows at most exponentially in $\vert \Im s \vert$. This will be established in Remark~\ref{rem:coarse-bound}. \hfill //
\end{remark}

To bound the Rankin--Selberg transform
\[
	R(\phi_{j\mathfrak{a}},s) 
		=
		\int_0^\infty \int_0^1 
			y^{s+k} \vert f_\mathfrak{a}(z) \vert^2 \overline{\mu_{j\mathfrak{a}}(z)} \frac{dxdy}{y^2},
\]
we represent $f_\mathfrak{a}$ and $\mu_{j\mathfrak{a}}$ as Fourier series, as described in~\eqref{eq:harmonic-Fourier-expansion-general} and~\eqref{eq:Maass-form-Fourier-expansion}, then execute the $x$-integral. This expresses $R(\phi_{j\mathfrak{a}},s)$ as a triple sum over integers $(n_1,n_2,n_3)$ subject to the relation $n_1 - n_2 = n_3$. As in section~\S{\ref{sec:unfolding}}, we group these terms based on the signs of $n_1$ and $n_2$, so that
\[
	R(\phi_{j \mathfrak{a}}, s) 
		= I^+_{j \mathfrak{a}}(s) + I^-_{j \mathfrak{a}}(s) 
			+ I^\times_{j \mathfrak{a}}(s) + I^0_{j \mathfrak{a}}(s),
\]
denoting the subsums in which $(n_1,n_2)$ are both positive, are both negative, have mixed sign, or contain a zero, respectively. By changing variables to introduce $m := \vert n_1-n_2 \vert = \vert n_3 \vert$ and grouping similar terms, we write
\begin{align}
	I_{j \mathfrak{a}}^+(s) \label{eq:I-plus-definition}
		&= \sum_{m,n+\varkappa_\mathfrak{a} > 0} 2\Re 
			\Big( c_\mathfrak{a}^+(n+m) \overline{c_\mathfrak{a}^+(n) \rho_{j\mathfrak{a}}(m)}\Big)
				\varphi_j^+(m,n+\varkappa_\mathfrak{a},s), \\
	I_{j \mathfrak{a}}^-(s)
		&= \sum_{m,n \geq 1} 2 \Re
			\Big( c_\mathfrak{a}^-(n) \overline{c_\mathfrak{a}^-(n+m) \rho_{j\mathfrak{a}}(m)} \Big)
				\varphi_j^-(m,n- \varkappa_\mathfrak{a},s), \\
	I_{j \mathfrak{a}}^\times(s) \label{eq:I-cross-definition}
		&= \sum_{m =1}^\infty \sum_{n = 1 - \lceil \varkappa_\mathfrak{a} \rceil}^{m-1} 2 \Re 
			\Big(c_\mathfrak{a}^+(n) \overline{c_\mathfrak{a}^-(m-n) \rho_{j\mathfrak{a}}(m)} \Big)
			\varphi_j^\times(m,n+\varkappa_\mathfrak{a},s),
\end{align}
in which the three functions $\varphi_j^+$, $\varphi_j^-$, and $\varphi_j^\times$ are defined by
\begin{align}
	\varphi_j^+(m,n,s) 
		&:= \int_0^\infty \! y^{s+k-\frac{1}{2}} e^{-2\pi (2n+m) y}
				K_{it_j}(2\pi m y) \frac{dy}{y}, \\ \label{eq:phi-definitions}
	\varphi_j^-(m,n,s)
		&:= \int_0^\infty \! y^{s+k-\frac{1}{2}} e^{2\pi (2n+m) y} \Gamma(1-k, 4\pi n y) \\[-5pt]
			& \hspace{30 mm} \times
			\Gamma(1-k, 4\pi(n+m) y)  K_{it_j}(2\pi m y) \frac{dy}{y}, \\
	\varphi_j^\times(m,n,s)
		&:= \int_0^\infty \! y^{s+k-\frac{1}{2}} e^{2\pi(m-2n)y} 
			\Gamma(1-k, 4\pi (m-n)y) K_{it_j}(2\pi m y) \frac{dy}{y}.
\end{align}
Here we have assumed without loss of generality that $\rho_{j\mathfrak{a}}(-m) = \overline{\rho_{j\mathfrak{a}}(m)}$ for Maass cusp forms of weight $0$. Lastly, for singular cusps, we define

\begin{align}
	I_{j\mathfrak{a}}^0(s) \label{eq:I-0-definition}
		&= \sum_{m > 0} 
			2 \Re \big( c_\mathfrak{a}^+(m) \overline{c_\mathfrak{a}^+(0) \rho_{j \mathfrak{a}}(m)} \big)
			\varphi_j^+(m,0,s) \\
		& \qquad +
		\sum_{m > 0} 
			2 \Re \big( c_\mathfrak{a}^+(m) \overline{c_\mathfrak{a}^-(0) \rho_{j \mathfrak{a}}(m)} \big)
			\varphi_j^+(m,0,s-k+1) \\
		& \qquad +
		\sum_{m > 0}
			2 \Re \big(c_\mathfrak{a}^-(m) \overline{c_\mathfrak{a}^+(0) \rho_{j \mathfrak{a}}(m)} \big)
			\varphi_j^\times(m,0,s) \\
		& \qquad +
		\sum_{m > 0}
			2 \Re \big(c_\mathfrak{a}^-(m) \overline{c_\mathfrak{a}^-(0) \rho_{j \mathfrak{a}}(m)} \big)
			\varphi_j^\times(m,0,s-k+1).
\end{align}
For non-singular cusps, we set $I_{j\mathfrak{a}}^0(s) = 0$, as the corresponding summands vanish or otherwise incorporate into $I_{j \mathfrak{a}}^+(s)$.

\begin{remark}
These decompositions mirror~\cite{Jutila96, Jutila97}, except that we separate $I_{j\mathfrak{a}}^+$ from $I_{j \mathfrak{a}}^-$ and introduce $I_{j \mathfrak{a}}^0$ to account for non-cuspidality. In fact, $\varphi_j^+$ exactly matches an unnamed function from~\cite[p.~449]{Jutila96}. Our functions $\varphi_j^-$ and $\varphi_j^\times$ can be viewed as variants of the functions $\varphi_j^+$ and $\varphi_j^-$ from~\cite[(3.4)]{Jutila97}, respectively. \hfill //
\end{remark}

\subsection{Representations and Estimates for \texorpdfstring{$\varphi_j^+$, $\varphi_j^\times$, and $\varphi_j^-$}{phi-plus, phi-cross, and phi-minus}}

In this section, we record some useful information about the functions $\varphi_j^+$, $\varphi_j^\times$, and $\varphi_j^-$. We first consider $\varphi_j^+$, leveraging earlier work of Jutila.

\begin{lemma}[{\cite[\S{3}]{Jutila96}}] \label{lem:phi-plus-behavior}
Define $\lambda = \lambda(m,n) := \sqrt{1-m^2/(2n+m)^2}$ and set $p := s+k-\frac{1}{2}$. The function $\varphi_j^+(m,n,s)$ defined in~\eqref{eq:phi-definitions} is analytic in $\Re p > 0$ and may be written in either of the forms
\begin{align}
	\varphi_j^+(m,n,s) \label{eq:phi-plus-representation-1}
	&= \frac{\sqrt{\pi} \, m^{it_j} \Gamma(p+it_j)\Gamma(p-it_j)}
					{(4\pi)^p (2n+m)^{p+it_j} \Gamma(p+\frac{1}{2})} \\
	& \qquad \qquad \qquad
		\times (1+\lambda)^{-p-it_j} {}_2F_1\Big(p, p+it_j, 2p
						\Big\vert \tfrac{2\lambda}{1+\lambda} \Big); \\
	\varphi_j^+(m,n,s)  \label{eq:phi-plus-representation-2}
	&= \frac{2^{-1-2p} \pi^{-p}}
					{(n(n+m))^{p/2}}  \\
	& \quad  
		\times \Big( \Big(\frac{1-\lambda}{1+\lambda}\Big)^{\frac{it_j}{2}} \Gamma(-it_j)\Gamma(p+it_j)
					{}_2F_1\Big(p, 1-p, 1+it_j
						\Big\vert \tfrac{\lambda-1}{2\lambda} \Big) \\
		& \qquad  
				+ \Big(\frac{1-\lambda}{1+\lambda}\Big)^{-\frac{it_j}{2}} \Gamma(it_j)\Gamma(p-it_j)
					{}_2F_1\Big(p, 1-p, 1-it_j
						\Big\vert \tfrac{\lambda-1}{2\lambda} \Big)\Big).
\end{align}
\end{lemma}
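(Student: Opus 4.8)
The plan is to follow \cite[\S{3}]{Jutila96} essentially verbatim. Since $\varphi_j^+(m,n,s)$ is a concrete scalar integral, nothing about the level $N$, the cusp, or the arithmetic of $f$ and $\mu_j$ plays any role; the only change from Jutila's setting is that his exponent is replaced here by $p = s+k-\tfrac12$, which is inert in every step. First I would dispose of the analyticity claim. The integrand of $\varphi_j^+(m,n,s)$ in~\eqref{eq:phi-definitions} is $y^{p-1}e^{-2\pi(2n+m)y}K_{it_j}(2\pi m y)$; since $t_j \in \mathbb{R}$, the small-argument bound $K_{it_j}(x) \ll_{t_j} 1$ (with an extra $\log(2/x)$ when $t_j = 0$) makes the integrand $\ll_{t_j} y^{\Re p - 1}$ near $y = 0$, integrable precisely when $\Re p > 0$, while near $y = \infty$ the factor $e^{-2\pi(2n+m)y}K_{it_j}(2\pi m y) \ll e^{-2\pi(2n+m)y}$ decays exponentially. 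These bounds are locally uniform in $p$, so $\varphi_j^+(m,n,s)$ is holomorphic in $\Re p > 0$.

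Next I would evaluate the integral in closed form using the Laplace transform of a $K$-Bessel function, \cite[6.621(3)]{GradshteynRyzhik07}, with $\mu = p$, $\nu = it_j$, $\alpha = 2\pi(2n+m)$, and $\beta = 2\pi m$ (the hypotheses $\Re(\mu \pm \nu) > 0$ and $\Re(\alpha + \beta) > 0$ hold throughout $\Re p > 0$). The output is a ratio of Gamma functions times $\,{}_2F_1\!\big(p+it_j,\,it_j+\tfrac12;\,p+\tfrac12;\,\tfrac{n}{n+m}\big)$. Recording $\lambda^2 = 4n(n+m)/(2n+m)^2$, so that $1 \pm \lambda = (\sqrt{n+m}\pm\sqrt{n})^2/(2n+m)$, and writing $u := \sqrt{n/(n+m)} \in (0,1)$, one has $\tfrac{n}{n+m} = u^2$, $\tfrac{2\lambda}{1+\lambda} = \tfrac{4u}{(1+u)^2} \in (0,1)$, $\tfrac{1-\lambda}{1+\lambda} = \big(\tfrac{1-u}{1+u}\big)^2$, and $\tfrac{\lambda - 1}{2\lambda} = -\tfrac{(1-u)^2}{4u}$, and also $\tfrac{m}{2n+m} = \sqrt{(1-\lambda)(1+\lambda)}$.

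Representation~\eqref{eq:phi-plus-representation-1} is then obtained by recognizing the Gauss hypergeometric in the output as the image of $\,{}_2F_1\!\big(p,\,p+it_j;\,2p;\,\tfrac{2\lambda}{1+\lambda}\big)$ under the $c = 2a$ quadratic transformation, after collecting the resulting powers of $1+\lambda$ and $m$. For representation~\eqref{eq:phi-plus-representation-2} I would start from~\eqref{eq:phi-plus-representation-1}, apply the $z \mapsto 1-z$ connection formula for ${}_2F_1$: here $c - a - b = -it_j$, so the two Frobenius solutions at $z = 1$ carry exponents $0$ and $\pm it_j$, which produces the two summands, the Gamma factors $\Gamma(\pm it_j)$, and (together with $\tfrac{m}{2n+m} = \sqrt{(1-\lambda)(1+\lambda)}$) the powers $\big(\tfrac{1-\lambda}{1+\lambda}\big)^{\pm it_j/2}$; a Pfaff transformation then sends each summand's argument to $\tfrac{w}{w-1} = \tfrac{\lambda-1}{2\lambda}$ and its parameters to $(p,\,1-p;\,1\pm it_j)$.

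The step I expect to be the main obstacle is the last one: selecting the right chain of quadratic, connection, and Pfaff identities and then tracking every Gamma-factor prefactor and every branch of the fractional powers so that the dust settles into exactly the stated shapes. This bookkeeping is precisely what is done in \cite[\S{3}]{Jutila96}, with $t_j$ in the role of Jutila's spectral parameter, and I would cite that reference for the detailed manipulations rather than reproduce them. I would also note, for motivation, that the two forms are recorded because they converge well in complementary ranges: the series in~\eqref{eq:phi-plus-representation-1} has argument near $1$ exactly when $m \ll n$ (i.e.\ $\lambda \to 1$), whereas the series in~\eqref{eq:phi-plus-representation-2} has argument tending to $0$ in that same regime, and conversely when $m \gg n$.
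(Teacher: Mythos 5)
Your proposal is correct and takes the same route as the paper: the paper's entire proof is the single sentence ``These identities are implicit in \cite[(3.16)--(3.21)]{Jutila96},'' and your argument simply supplies the roadmap (GR 6.621(3) for the closed form, the $c=2b$ quadratic transformation DLMF 15.8.13 for the first representation, then the $z\mapsto 1-z$ connection formula followed by Pfaff for the second) before deferring the remaining Gamma/power bookkeeping to the same reference. I spot-checked the key steps — the substitution $u=\sqrt{n/(n+m)}$ makes $n/(n+m)=u^2$, $\tfrac{2\lambda}{1+\lambda}=\tfrac{4u}{(1+u)^2}$, and $(2n+m)(1+\lambda)=(\sqrt n+\sqrt{n+m})^2$, so DLMF 15.8.13 with parameters $(p+it_j,p)$ produces exactly $(1+u)^{-2p-2it_j}$ and the desired hypergeometric for \eqref{eq:phi-plus-representation-1}; and applying the connection formula at $a=p$, $b=p+it_j$, $c=2p$ gives $\Gamma(2p)\Gamma(\mp it_j)/(\Gamma(p)\Gamma(p\mp it_j))$, which after Legendre duplication, Pfaff, $(2\lambda)^p=4^p(n(n+m))^{p/2}/(2n+m)^p$, and $m/(2n+m)=\sqrt{(1-\lambda)(1+\lambda)}$ collapses to the exact prefactor of \eqref{eq:phi-plus-representation-2} — and everything closes up.
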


\begin{proof} These identities are implicit in~\cite[(3.16)-(3.21)]{Jutila96}.
\end{proof}

In the special case $n=0$, we have $\lambda = 0$ and~\eqref{eq:phi-plus-representation-1} implies that
\begin{align} \label{eq:phi-plus-closed-form}
	\varphi_j^+(m,0,s)
		= \frac{\sqrt{\pi} \, \Gamma(p+it_j)\Gamma(p-it_j)}
					{(4\pi m )^p  \Gamma(p+\frac{1}{2})},
\end{align}
which can also be seen directly via~\cite[6.621(3)]{GradshteynRyzhik07}. For $n \neq 0$, we don't expect simplification but can still produce upper bounds. For example, in the text surrounding~\cite[(4.5)]{Jutila97}, Jutila applies~\eqref{eq:phi-plus-representation-1} to produce 
\begin{align} \label{eq:phi-plus-upper-bound-1}
	\varphi_j^+(m,n,s)
		\ll_{\Re p} \frac{\vert \Gamma(p+it_j)\Gamma(p-it_j) \vert}
					{(2n+m)^{\Re p} (1+\lambda)^{\Re p} \vert \Gamma(p) \vert}
				\log(2n+m),
\end{align}
valid for $\Re p > 0$. An upper bound derived from the representation~\eqref{eq:phi-plus-representation-2} is presented in the following lemma.

\begin{lemma}[{cf.~\cite[p.452]{Jutila97}}]
Fix $t_j \in \mathbb{R}$ and $\epsilon > 0$. Suppose that $\lambda \neq 0$. For any $s$ in a fixed vertical strip away from poles,
\begin{align} \label{eq:phi-plus-upper-bound-2}
	\varphi_j^+(m,n,s) \ll_{\epsilon} 
		\frac{\vert t_j \vert^{\Re p - 1}}{(n(n+m))^{\frac{\Re p}{2}}}
	\Big( 1 + \Big\vert \frac{1+ \vert s\vert^2}{\lambda t_j} \Big\vert^{1+\vert \Re p \vert+\epsilon}\Big)
	\frac{e^{\frac{\pi}{2} \vert \Im s \vert}}{e^{\pi \vert t_j \vert}}. \qquad
\end{align}
\end{lemma}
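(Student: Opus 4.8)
The plan is to follow the argument behind~\cite[p.~452]{Jutila97}, adapted to half-integral weight, starting from the closed form~\eqref{eq:phi-plus-representation-2} rather than~\eqref{eq:phi-plus-representation-1}. Writing $\mathcal{G}^\pm := \Gamma(\mp it_j)\,\Gamma(p\pm it_j)\,{}_2F_1(p,1-p,1\mp it_j\mid\tfrac{\lambda-1}{2\lambda})$, we have $\varphi_j^+(m,n,s)=2^{-1-2p}\pi^{-p}(n(n+m))^{-p/2}\big((\tfrac{1-\lambda}{1+\lambda})^{it_j/2}\mathcal{G}^+ +(\tfrac{1-\lambda}{1+\lambda})^{-it_j/2}\mathcal{G}^-\big)$. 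The prefactor has modulus $\asymp_{\Re p}(n(n+m))^{-\Re p/2}$, since $n$ and $n+m$ are positive and $2,\pi$ are positive reals, while the two factors $(\tfrac{1-\lambda}{1+\lambda})^{\pm it_j/2}$ have modulus $1$ because $\lambda\in[0,1)$ and $t_j\in\mathbb{R}$. Thus the lemma reduces to the bound $\mathcal{G}^\pm\ll_\epsilon|t_j|^{\Re p-1}\big(1+\big|\tfrac{1+|s|^2}{\lambda t_j}\big|^{1+|\Re p|+\epsilon}\big)e^{\frac{\pi}{2}|\Im s|-\pi|t_j|}$, uniformly for $s$ in a fixed vertical strip away from poles.

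For the exponential and power-of-$|t_j|$ savings we treat the $\Gamma$-factors by Stirling: $|\Gamma(\mp it_j)|\asymp|t_j|^{-1/2}e^{-\frac{\pi}{2}|t_j|}$, and, since $\Re p$ is fixed and positive (so $\Gamma(p\pm it_j)$ is entire), $|\Gamma(p\pm it_j)|\ll_{\Re p}(1+|\Im s\pm t_j|)^{\Re p-1/2}e^{-\frac{\pi}{2}|\Im s\pm t_j|}$. The reverse triangle inequality gives $|t_j|+|\Im s\pm t_j|\ge 2|t_j|-|\Im s|$, which produces the exponential factor $e^{-\pi|t_j|+\frac{\pi}{2}|\Im s|}$; the surviving polynomial factors are $\ll(1+|t_j|)^{\Re p-1}(1+|s|)^{O(1)}$, and the $|s|$-power will be absorbed into the correction term. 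It remains to bound the hypergeometric factor uniformly in $\lambda$. For $\lambda$ bounded away from $0$, the argument $\tfrac{\lambda-1}{2\lambda}$ lies in a compact subset of $(-\infty,0)$, and after a Pfaff transformation to argument $\tfrac{1-\lambda}{1+\lambda}\in(0,1)$ one bounds ${}_2F_1$ by an Euler integral, whose $\Gamma$-prefactor cancels against the $\Gamma(p\pm it_j)$ in $\mathcal{G}^\pm$ and leaves something $\ll_{\Re p}(1+|s|)^{O(\Re p)}$ — harmless. The essential work is the regime $\lambda\to0$, where $\tfrac{\lambda-1}{2\lambda}\to-\infty$.

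In that regime I would apply the Pfaff transformation ${}_2F_1(p,1-p,1\pm it_j\mid w)=(\tfrac{1+\lambda}{2\lambda})^{-p}\,{}_2F_1(p,p\pm it_j,1\pm it_j\mid\tfrac{1-\lambda}{1+\lambda})$, followed by the Gauss connection formula at the singular point $1$, which re-expresses the hypergeometric in terms of ${}_2F_1$'s in the small parameter $1-\tfrac{1-\lambda}{1+\lambda}=\tfrac{2\lambda}{1+\lambda}\asymp\lambda$. In the two resulting terms the $\Gamma$-quotients conspire with the $\Gamma(\mp it_j)\Gamma(p\pm it_j)$ prefactor: one term loses its $\Gamma(p\pm it_j)$ entirely, leaving only $\Gamma(\mp it_j)\Gamma(1\pm it_j)=\mp i\pi/\sinh(\pi t_j)$, of modulus $\asymp e^{-\pi|t_j|}$, and the other leaves a ratio $\Gamma(p\pm it_j)/\Gamma(1\pm it_j-p)\asymp|t_j|^{2\Re p-1}$ (times $|s|$-polynomials) together with an explicit factor $(\tfrac{2\lambda}{1+\lambda})^{1-2p}$ of modulus $\asymp\lambda^{1-2\Re p}$. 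The remaining ${}_2F_1$'s at argument $\asymp\lambda$ are bounded by their Euler integral representations (valid since $\Re p>0$), which again retain cancellation through their $\Gamma$-prefactors; combining all of this, and lowering $\Re p$ into $(0,1)$ by $\lceil\Re p\rceil$ contiguous relations — each contributing a factor rational in $p$, hence $\ll 1+|s|^2$ — produces precisely the shape $|t_j|^{\Re p-1}\big(1+\big|\tfrac{1+|s|^2}{\lambda t_j}\big|^{1+|\Re p|+\epsilon}\big)$, with the $\epsilon$ absorbing termwise-summation slack. The main obstacle is the joint uniformity: $\Im p=\Im s$ is unbounded, $t_j$ is unbounded, and $\lambda$ may be arbitrarily small, and all three meet inside the hypergeometric; arranging the $\Gamma$-cancellations and the $\lambda$-power bookkeeping so as to preserve the clean exponential saving $e^{-\pi|t_j|+\frac{\pi}{2}|\Im s|}$ — rather than incurring an $e^{c\lambda|t_j|}$-type loss from a crude termwise estimate on the connection-formula series — is the delicate point, and is exactly where Jutila's method must be followed with care.
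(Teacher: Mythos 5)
Your sketch takes a genuinely different route from the paper's. The paper bounds the hypergeometric factor in~\eqref{eq:phi-plus-representation-2} by representing it as a Mellin--Barnes integral and shifting the contour past only the poles of $\Gamma(-w)$ to $\Re w = \Re p + \epsilon$; the residues are a finite partial power series, the shifted integral is handled by Stirling, and the entire exponential saving $e^{\frac{\pi}{2}\vert \Im s\vert - \pi\vert t_j\vert}$ comes directly from the explicit prefactor $\Gamma(\mp it_j)\Gamma(p\pm it_j)$ — no connection formula is used. You instead apply Pfaff and then the Gauss connection formula, re-expressing everything through ${}_2F_1$'s at argument $\frac{2\lambda}{1+\lambda}\asymp\lambda$ and appealing to Euler integrals. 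This is a legitimate alternative strategy in spirit, but it forces one to recover a cancellation that the paper's contour never disturbs.

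That is where the gap lives, and it is not merely a matter of bookkeeping. Take $\mathcal{G}^+$ and write out the first connection-formula term (the one carrying $\Gamma(c)\Gamma(c-a-b)/(\Gamma(c-a)\Gamma(c-b))$). After inserting the Euler $\Gamma$-prefactor $\frac{\Gamma(2p)}{\Gamma(p+it_j)\Gamma(p-it_j)}$ for ${}_2F_1(p,\,p+it_j;\,2p;\,\tfrac{2\lambda}{1+\lambda})$ and cancelling $\Gamma(p+it_j)$ against the prefactor in $\mathcal{G}^+$, the full $\Gamma$-block is
\[
\frac{\Gamma(-it_j)\Gamma(1+it_j)\,\Gamma(1-2p)\Gamma(2p)}{\Gamma(1+it_j-p)\Gamma(p-it_j)\,\Gamma(1-p)}.
\]
By the reflection formula, $\Gamma(1+it_j-p)\Gamma(p-it_j)=\pi/\sin(\pi(p-it_j))$, so its reciprocal grows like $e^{\pi\vert t_j\vert}$, and this \emph{exactly} cancels $\Gamma(-it_j)\Gamma(1+it_j)\asymp e^{-\pi\vert t_j\vert}$. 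What remains of the $\Gamma$-block is $O((1+\vert\Im s\vert)^{\Re p-\frac12}e^{-\pi\vert\Im s\vert/2})$ with \emph{no} decay in $t_j$ at all. At this point all the exponential saving in $t_j$ must be extracted from the Euler integral $\int_0^1 t^{p+it_j-1}(1-t)^{p-it_j-1}(1-wt)^{-p}\,dt$ itself, but the absolute-value estimate (or, equivalently, bounding the ${}_2F_1$ near argument $0$ by the leading term of its power series) gives $O(1)$ — the decay is hidden in the oscillation of the phase $t^{it_j}(1-t)^{-it_j}$, and recovering it requires a steepest-descent/contour-shift analysis of the integral or a uniform large-parameter asymptotic for ${}_2F_1(p,\,p+it_j;\,2p;\,w)$ in the transitional regime $\vert t_j\vert\lambda\asymp 1$. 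You name this danger at the end (the threat of an $e^{c\lambda\vert t_j\vert}$-type loss), but the mechanism you propose — ``retain cancellation through their $\Gamma$-prefactors'' — is precisely what fails here: for this term the $\Gamma$-prefactors of the Euler integral cancel the \emph{wrong} way. There are also minor bookkeeping slips (the $\lambda^{1-2\Re p}$ is attached to the term with $\Gamma(2p-1)/\Gamma(p)$, not the one with $\Gamma(p\pm it_j)/\Gamma(1\pm it_j - p)$, and the Pfaff prefactor $(\tfrac{1+\lambda}{2\lambda})^{-p}\asymp\lambda^{\Re p}$ is omitted from the $\lambda$-accounting), but the missing oscillatory estimate is the substantive gap, and the paper's Barnes-contour argument is designed exactly to sidestep it.
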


\begin{proof} For $p \notin \mathbb{Z}$ and non-positive $z \in \mathbb{C}$, consider the integral representation
\begin{align} 
\begin{split} \label{eq:2F1-contour}
	& {}_2F_1\Big(p, 1-p, 1+it_j
						\Big\vert z \Big) \\
	& \qquad 
		= \int_B 
			\frac{\Gamma(1+it_j)\Gamma(p+w)\Gamma(1-p+w)\Gamma(-w)}{\Gamma(p)\Gamma(1-p)\Gamma(1+it_j + w)}
			(-z)^w \, dw,
\end{split}
\end{align}
in which the contour $B$ separates the poles of $\Gamma(p+w)\Gamma(1-p+w)$ from those of $\Gamma(-w)$~\cite[(15.6.6)]{DLMF}. We suppose that $\Re p  > 0$ and shift the contour $B$ to the line $\Re w = \Re p + \epsilon$. This shift passes poles and extracts residues at $w = 0,1,\ldots, \lfloor \Re p + \epsilon \rfloor$, totaling
\[
	\sum_{v=0}^{\lfloor \Re p + \epsilon \rfloor} 
		\frac{(p)_v (1-p)_v}{v! (1+it_j)_v} z^v
	\ll 1 + \Big\vert \frac{(1+ \vert p\vert^2) z}{t_j} \Big\vert^{\Re p + \epsilon},
\]
in which $(\alpha)_v := \Gamma(v+\alpha)/\Gamma(\alpha)$ denotes the Pochhammer symbol. The same upper bound holds for the shifted integral, by Stirling's approximation. We apply this estimate for $z = \frac{\lambda -1}{2\lambda} \ll \lambda^{-1}$, then apply Stirling's approximation to the other factors of~\eqref{eq:phi-plus-representation-2} to complete the case $\Re p > 0$.
The case $\Re p < 0$ then follows using the invariance of~\eqref{eq:2F1-contour} under $p \leftrightarrow 1-p$.
\end{proof}

We conclude our discussion of $\varphi_j^+$ by presenting a uniform upper bound for the size of its residues.

\begin{lemma} \label{lem:phi-plus-residue-bound}
Fix $t_j \in \mathbb{R}$. For each integer $r \geq 0$, we have
\[
	\Res_{s =\frac{1}{2}-k \pm it_j - r} \varphi_j^+(m,n,s)
	\ll_r (n+m)^r \vert t_j \vert^{-\frac{1}{2}} e^{-\frac{\pi}{2} \vert t_j \vert}.
\]
\end{lemma}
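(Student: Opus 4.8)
The plan is to read the residue directly off the closed form for $\varphi_j^+$ recorded in Lemma~\ref{lem:phi-plus-behavior}. Write $p = s + k - \tfrac12$ as there, so that the points $s = \tfrac12 - k \pm it_j - r$ correspond to $p = \pm it_j - r$ and the residue in $s$ equals the residue in $p$. I treat the branch $p \to it_j - r$; the branch $p \to -it_j - r$ is handled the same way (and is in fact simpler, see below). For real $t_j \ne 0$ the only factor in~\eqref{eq:phi-plus-representation-1} with a singularity at $p = it_j - r$ is $\Gamma(p - it_j)$: the factor $\Gamma(p + it_j)$ tends to the finite value $\Gamma(2it_j - r)$ (as $2it_j - r \notin \mathbb{Z}_{\le 0}$), the hypergeometric factor has third parameter $2p \to 2it_j - 2r \notin \mathbb{Z}_{\le 0}$ and argument $\tfrac{2\lambda}{1+\lambda} \in [0,1)$ so it remains finite, and the remaining factors are holomorphic and non-vanishing. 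Thus the pole is simple, and since $\Res_{p = it_j - r}\Gamma(p - it_j) = (-1)^r/r!$ one obtains
\begin{align*}
	\Res_{s = \frac12 - k + it_j - r}\varphi_j^+(m,n,s)
	= \frac{(-1)^r\sqrt{\pi}\, m^{it_j}\,\Gamma(2it_j - r)\,(1+\lambda)^{-2it_j + r}}
	{r!\,(4\pi)^{it_j - r}\,(2n+m)^{2it_j - r}\,\Gamma(\tfrac12 + it_j - r)}\;
	{}_2F_1\!\Big(it_j - r,\, 2it_j - r,\, 2it_j - 2r \,\Big\vert\, \tfrac{2\lambda}{1+\lambda}\Big).
\end{align*}

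The delicate point is to bound the hypergeometric factor uniformly in $t_j$: its parameters are of size $\asymp\lvert t_j\rvert$ and its argument can be arbitrarily close to $1$, where $\Re(c - a - b) = \Re(-it_j) = 0$ puts the series exactly on the boundary of convergence, so no crude estimate suffices. The resolution is Euler's transformation ${}_2F_1(a,b,c\,\vert\, x) = (1-x)^{c-a-b}\,{}_2F_1(c-a, c-b, c\,\vert\, x)$: here $c - b = -r \in \mathbb{Z}_{\le 0}$, so the transformed series \emph{terminates}, giving a polynomial in $x$ of degree at most $r$. Since $1 - x = \tfrac{1-\lambda}{1+\lambda} \in (0,1]$ and $c - a - b = -it_j$ is purely imaginary, the prefactor $(1-x)^{-it_j}$ is unimodular, while each coefficient $\tfrac{(it_j - r)_v (-r)_v}{(2it_j - 2r)_v\, v!}$ with $0 \le v \le r$ is $\ll_r 1$ uniformly in $t_j \in \mathbb{R}$ — indeed $\lvert(2it_j - 2r)_v\rvert \ge \max\big((2\lvert t_j\rvert)^v,\,(r+1)^v\big)$ dominates $\lvert(it_j - r)_v\rvert \le (\lvert t_j\rvert + r)^v$. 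Hence the hypergeometric factor is $\ll_r 1$ uniformly in $t_j$ and $\lambda$. For the branch $p \to -it_j - r$ the second hypergeometric parameter is $p + it_j = -r$ from the outset, so it is already such a polynomial and no transformation is needed.

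It remains to estimate the elementary factors. The quantities $m^{it_j}$, $(2n+m)^{2it_j}$ and $(4\pi)^{it_j}$ are unimodular, $(1+\lambda)^r \le 2^r$, and Stirling's formula gives $\lvert\Gamma(2it_j - r)\rvert \ll_r \lvert t_j\rvert^{-r-\frac12} e^{-\pi\lvert t_j\rvert}$ together with $\lvert\Gamma(\tfrac12 + it_j - r)\rvert^{-1} \ll_r \lvert t_j\rvert^{r} e^{\frac\pi2\lvert t_j\rvert}$. Multiplying, the powers $\lvert t_j\rvert^{\pm r}$ cancel and one is left with $\ll_r (2n+m)^r\,\lvert t_j\rvert^{-\frac12}\,e^{-\frac\pi2\lvert t_j\rvert}$; since $2n + m \le 2(n+m)$ this is the claimed bound, and the case $s = \tfrac12 - k - it_j - r$ is identical. (As an independent check, these residues can also be computed from~\eqref{eq:phi-definitions} directly: they are the coefficients of $y^{\pm it_j + r}$ in the expansion of $e^{-2\pi(2n+m)y}K_{it_j}(2\pi m y)$ about $y = 0$, i.e.\ finite sums of the shape $\Gamma(\pm it_j)\sum_{2a+b = r}\tfrac{(\pi m)^{2a\mp it_j}(-2\pi(2n+m))^{b}}{a!\,b!\,(1 \mp it_j)_a}$, which one bounds termwise using $\lvert\Gamma(\pm it_j)\rvert \ll \lvert t_j\rvert^{-1/2}e^{-\pi\lvert t_j\rvert/2}$ and $\lvert(1\mp it_j)_a\rvert \ge a!$.)
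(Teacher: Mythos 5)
Your argument is correct and delivers the stated bound, but it runs through a genuinely different representation than the paper's proof. You work from \eqref{eq:phi-plus-representation-1}, where the pole at $p = it_j - r$ is a simple pole of $\Gamma(p-it_j)$ and the residue carries a ${}_2F_1(it_j-r,\,2it_j-r,\,2it_j-2r\,\vert\,\tfrac{2\lambda}{1+\lambda})$; you then apply the \emph{Euler} transformation to exploit $c-b=-r$ and obtain a terminating polynomial with coefficients uniformly $O_r(1)$. The paper instead works from \eqref{eq:phi-plus-representation-2}: there the relevant pole sits in the second summand's $\Gamma(p-it_j)$, the leftover hypergeometric is ${}_2F_1(it_j-r,\,1+r-it_j,\,1-it_j\,\vert\,\tfrac{\lambda-1}{2\lambda})$, and the paper applies the \emph{Pfaff} transformation ${}_2F_1(a,b,c\,\vert\,z)=(1-z)^{-a}{}_2F_1(a,c-b,c\,\vert\,\tfrac{z}{z-1})$ to hit the same terminating phenomenon (again $c-b=-r$), giving an $O_r(\lambda^{-r})$ bound; this must then be recombined with the prefactor $(n(n+m))^{r/2}$ via $\lambda^2\asymp n/(n+m)$ to reach $(n+m)^r$. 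Your route is a bit more direct on the $(m,n)$-side, since the $(2n+m)^r$ emerges immediately from the prefactor of representation~\eqref{eq:phi-plus-representation-1}, whereas the paper's route has to juggle the $\lambda$-dependence; on the other hand the paper's representation is the one reused verbatim in the neighboring growth estimate \eqref{eq:phi-plus-upper-bound-2}, so its choice keeps the surrounding section internally consistent. Either way, the Stirling bookkeeping ($|t_j|^{-r-\frac12}e^{-\pi|t_j|}$ from $\Gamma(2it_j-r)$, $|t_j|^{r}e^{\frac{\pi}{2}|t_j|}$ from $\Gamma(\tfrac12+it_j-r)^{-1}$, and the boundedness of the terminating polynomial) lands at the same $|t_j|^{-1/2}e^{-\frac{\pi}{2}|t_j|}$. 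Your parenthetical check via the small-$y$ expansion of $e^{-2\pi(2n+m)y}K_{it_j}(2\pi m y)$ is a sound independent confirmation.
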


\begin{proof}
Stirling's approximation and~\eqref{eq:phi-plus-representation-2} give
\[
	\Res_{s = \frac{1}{2}-k + it_j - r} \varphi_j^+(m,n,s)
	\ll_r \frac{(n(m+n))^{\frac{r}{2}}}{\vert t_j \vert^{1/2} e^{\frac{\pi}{2} \vert t_j \vert}}
	\cdot \Big\vert {}_2F_1\Big(\begin{matrix} it_j-r, 1+r-it_j,\\ 1-it_j \end{matrix} \Big\vert \tfrac{\lambda-1}{2\lambda}\Big)\Big\vert.
\]
The transformation ${}_2F_1(a,b,c,z) = (1-z)^{-a} {}_2F_1(a,c-b,c, \frac{z}{z-1})$ (cf.~\cite[9.131(1)]{GradshteynRyzhik07}) relates the hypergeometric function above to the finite sum
\[
	\Big(\frac{\lambda+1}{2\lambda}\Big)^{-it_j + r}
	{}_2F_1\Big(\begin{matrix} it_j-r, -r,\\ 1-it_j \end{matrix} \Big\vert \tfrac{1-\lambda}{1+\lambda}\Big)
	\ll \lambda^{-r} \sum_{v=0}^r \frac{(it_j-r)_v (-r)_v}{(1-it_j)_v v!} \Big(\frac{1-\lambda}{1+\lambda}\Big)^v,
\]
which is $O_r(\lambda^{-r})$, uniformly in $t_j$. The claim now follows from the estimate $\lambda^2 \asymp \frac{n}{n+m}$, and the computation for $s=\frac{1}{2} -k-it_j -r$ is identical.
\end{proof}

To understand $\varphi_j^\times$ and $\varphi_j^-$, we express them as contour integral transforms of $\varphi_j^+$. The following lemma consolidates relevant information about $\varphi_j^\times$.

\begin{lemma} \label{lem:phi-cross-behavior}
The function $\varphi_j^\times(m,n,s)$ defined in~\eqref{eq:phi-definitions} admits meromorphic continuation to $s \in \mathbb{C}$, with poles at $s = -\frac{1}{2} \pm it_j -r$ and $s = \frac{1}{2} - k \pm it_j - r$, for $r \in \mathbb{Z}_{\geq 0}$. If $t_j \in \mathbb{R}$ and $\Re s > 0$ away from poles, we have
\begin{align} \label{eq:phi-cross-upper-bound}
	\varphi_j^\times(m,n,s)
	&\ll  \frac{1}{(m-n)^k\sqrt{m}}
	\Big( \frac{\vert s - it_j \vert \cdot \vert s + it_j\vert}{m \vert s \vert}\Big)^{\Re s -1}
	\vert s \vert^{-\frac{1}{2}} \\
	& \qquad
	\times \! \Big( 
		1 + \Big( 
					\frac{m\vert s \vert}{(m-n)\vert s - it_j \vert \cdot \vert s + it_j\vert}
					\Big)^{\vert k \vert +\epsilon} \Big)
	e^{-\pi \vert t_j \vert + \frac{\pi}{2} \vert \Im s \vert}  \\
	& \quad 
	+ \delta_{[\Re s < \frac{1}{2} - k]} \big(m+n+ \vert s \vert + \vert t_j \vert\big)^A
	e^{-2\pi \vert t_j \vert + \frac{3\pi}{2} \vert \Im s \vert}
\end{align}
for all $\epsilon > 0$ and for some $A>0$ depending only on $k$.
\end{lemma}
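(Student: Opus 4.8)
The plan is to build a Mellin--Barnes representation of $\varphi_j^\times(m,n,s)$ in which the $y$-integral collapses onto the elementary function $\varphi_j^+(m,0,\cdot)$, following the template of the proof of Lemma~\ref{lem:G_k-growth}, and then to read off both the continuation and the size estimate from it. First I would substitute the contour--integral representation~\eqref{eq:incomplete-gamma-integral-representation} for $\Gamma(1-k,4\pi(m-n)y)e^{4\pi(m-n)y}$ into the definition~\eqref{eq:phi-definitions} of $\varphi_j^\times$. The decisive point is that the exponential weights cancel, $e^{2\pi(m-2n)y}\cdot e^{-4\pi(m-n)y}=e^{-2\pi m y}$, so that after interchanging the $y$- and $w$-integrals the inner integral becomes $\int_0^\infty y^{s-w-\frac12}e^{-2\pi m y}K_{it_j}(2\pi m y)\,\frac{dy}{y}=\varphi_j^+(m,0,s-w-k)$, which is given in closed form by~\eqref{eq:phi-plus-closed-form}. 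This yields, for $\Re s$ large,
\begin{align*}
	\varphi_j^\times(m,n,s)
	&=c_k\,(4\pi)^{-s}(m-n)^{-k}m^{\tfrac12-s}\,\frac{1}{2\pi i}\int_{(c)}\mathcal{G}_s(w)\Big(\tfrac{m}{m-n}\Big)^{w}dw, \\
	\mathcal{G}_s(w)&:=\frac{\Gamma(w+k)\,\Gamma(s-w-\tfrac12+it_j)\,\Gamma(s-w-\tfrac12-it_j)}{\sin(\pi w)\,\Gamma(s-w)},
\end{align*}
for a nonzero constant $c_k$ depending only on $k$, where $(c)$ is the line $\Re w=-\epsilon$ when $k>0$ and in general is the pole--separating contour of~\eqref{eq:incomplete-gamma-integral-representation}. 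Stirling shows $\mathcal{G}_s(w)$ decays like $e^{-2\pi|\Im w|}$ (up to polynomial factors) along $(c)$, so the integral converges absolutely.

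For the meromorphic continuation, the only $s$-dependence sits in the Gamma factors of $\mathcal{G}_s$, whose $w$-poles lie at $w=s-\tfrac12\pm it_j+r$ ($r\ge0$). As $\Re s$ decreases I would push $(c)$ leftward, keeping these poles to its right; in any fixed half-plane this crosses only finitely many poles, namely the integer poles $w=-r$ of $1/\sin\pi w$, the poles $w=-k-r$ of $\Gamma(w+k)$, and (once $\Re s<\tfrac12$) the integrand's own poles $w=s-\tfrac12\pm it_j+r$. Computing the associated residues --- each a ratio of Gamma functions times a power of $m/(m-n)$ --- and simplifying via the reflection formula, one finds that the residue at $w=-r$ carries poles in $s$ at $s=-\tfrac12\pm it_j-r'$, the residue at $w=-k-r$ carries poles at $s=\tfrac12-k\pm it_j-r'$, and the residues at $w=s-\tfrac12\pm it_j+r$ carry poles on the same lattices, with any spurious poles cancelling between the residue terms and the shifted integral. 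Letting the half-plane grow produces the continuation to all of $\mathbb{C}$ with exactly the asserted pole set.

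For the size bound, in the range where no residue intervenes --- in particular $\Re s>\tfrac12$ on $\Re w=-\epsilon$, and more generally $\Re s\ge\tfrac12-k$ after sliding $(c)$ to $\Re w=\Re s-\tfrac12-\epsilon'$ --- I would estimate the displayed integral directly. Its integrand concentrates near $\Im w=0$, and Stirling applied there factorises the bound: the block $|\Gamma(s-w-\tfrac12+it_j)\Gamma(s-w-\tfrac12-it_j)/\Gamma(s-w)|$ contributes $e^{-\pi|t_j|+\frac{\pi}{2}|\Im s|}$ when $|\Im s|\le|t_j|$ and $e^{-\frac{\pi}{2}|\Im s|}$ otherwise (both majorised by $e^{-\pi|t_j|+\frac{\pi}{2}|\Im s|}$), while the surviving Stirling data, the prefactor $(m-n)^{-k}m^{\tfrac12-s}$, and the weight $(m/(m-n))^{c}$ deliver the polynomial factors in $|s|$, $|s\pm it_j|$, $m$, and $m-n$. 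The correction factor $1+\big(m|s|/((m-n)|s-it_j||s+it_j|)\big)^{|k|+\epsilon}$ appears exactly as in the derivation of~\eqref{eq:phi-plus-upper-bound-2}: one shifts the contour past $\lfloor|\Re p|+\epsilon\rfloor$ further poles of $\varphi_j^+$ and bounds the now--negligible remaining integral by Stirling. Finally, the residues crossed once $\Re s<\tfrac12$ (those at $w=s-\tfrac12\pm it_j$) have exponential size $\ll e^{-2\pi|t_j|+\frac{3\pi}{2}|\Im s|}$; their polynomial weight is $(m-n)^{\frac12-k-\Re s}$, which is harmless (it is $\le1$ and is absorbed by the main term) when $\Re s\ge\tfrac12-k$, but when $0<\Re s<\tfrac12-k$ --- which can occur only if $k<\tfrac12$ and so is irrelevant to the application $k=\tfrac32$ --- it must be retained, giving the $\delta_{[\Re s<\frac12-k]}$ term with $A$ depending only on $k$.

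The hard part is the size estimate: propagating the exact exponents in $|s|$, $|s\pm it_j|$, $m$, and $m-n$ through the Stirling asymptotics of a ratio of four Gamma functions, and choosing the (possibly $s$-dependent) contour $(c)$ so that the output lands precisely in the stated form, is delicate --- most of all in the degenerate regime $m-n\ll m$, where the correction factor is genuinely needed and $(m-n)^{-k}$ is large, and near $\Re s=\tfrac12-k$, where one must show the intervening residues are either dominated by the main term or correctly extracted as the $\delta$-term. A secondary nuisance is fixing the admissible weaving contour in~\eqref{eq:incomplete-gamma-integral-representation} when $k<0$, so that the poles of $1/\sin\pi w$ and of $\Gamma(w+k)$ are separated correctly.
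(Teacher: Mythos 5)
Your proposal is correct and follows the paper's proof step by step: the same substitution of~\eqref{eq:incomplete-gamma-integral-representation}, the same observation that the exponentials cancel to $e^{-2\pi m y}$ so the $y$-integral collapses to $\varphi_j^+(m,0,s-k-w)$, meromorphic continuation by shifting the $w$-contour left past the fixed poles of $\Gamma(w+k)/\sin(\pi w)$, and the size bound by shifting right to $\Re w = \vert k\vert + \epsilon$ and estimating residues plus the shifted integral via~\eqref{eq:phi-plus-closed-form} and Stirling. The only differences are cosmetic or marginal improvements: you write out the kernel $\mathcal{G}_s(w)$ explicitly rather than leaving $\varphi_j^+(m,0,\cdot)$ symbolic (note the prefactor should be $(4\pi)^{\frac12-s}$ rather than $(4\pi)^{-s}$, but this only affects your $c_k$), your account of which poles are crossed during the leftward shift is a bit muddled (you cannot simultaneously keep the $s$-dependent poles to the right of the contour and cross them), and you supply a justification --- via the sign of the exponent on $(m-n)^{\frac12-k-\Re s}$ --- for why the residue error is only genuinely needed when $\Re s < \frac12 - k$, a point the paper's proof passes over silently.
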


\begin{proof}
The integral representation~\eqref{eq:incomplete-gamma-integral-representation} implies that
\begin{align}
	\varphi_j^\times(m,n,s)
		&=
			\frac{-\pi}{\Gamma(k)} 
			\cdot  \frac{1}{2\pi i} \int_C \frac{\Gamma(w+k)}{(4\pi(m-n))^{w+k} \sin(\pi w)} \\
		& \qquad\qquad \qquad
	\times\Big( \int_0^\infty \! y^{s-\frac{1}{2}-w} e^{-2\pi m y} K_{it_j}(2\pi m y) \frac{dy}{y} \Big) dw \\
		\label{eq:phi-cross-integral-representation}
		&= 
			\frac{-\pi}{\Gamma(k)} \cdot  \frac{1}{2\pi i} 
			\int_C \frac{\Gamma(w+k)\varphi_j^+(m,0, s-k-w)}{(4\pi(m-n))^{w+k} \sin(\pi w)} dw,
\end{align}
where $C$ is a contour separating the poles of $\Gamma(w+k)$ from those at $w=0,1,\ldots$ arising from $1/\sin(\pi w)$. To begin, we require $\Re s > \frac{1}{2} + \max \{\Re w: w \in C\}$. To consider general $s$, we shift the contour $C$ left, passing poles from $\Gamma(w+k)$ and $1/\sin(\pi w)$ and extracting residues involving $\varphi_j^+(m,0,s)$ at shifted arguments. By~\eqref{eq:phi-plus-closed-form}, these residues contribute poles at the poles of $\Gamma(s+k-\frac{1}{2} \pm it_j)$ and $\Gamma(s + \frac{1}{2} \pm it_j)$.

To produce growth estimates, we then shift $C$ rightwards, to the contour $\Re w = \vert k \vert + \epsilon$. This extracts a sum of residues equal to
\begin{align*}
	& \sum_{q = 0}^{\vert k \vert - \frac{1}{2}} 
			\frac{(-1)^q\Gamma(q+k)}{\Gamma(k) (4\pi(m-n))^{k+q}} \cdot \varphi_j^+(m,0,s-k-q) \\[-.38em]
	& \qquad + \!\!\!\!
		\sum_{r=0}^{\lfloor \vert k \vert +\epsilon - \Re s + \frac{1}{2} \rfloor} \!\!\!\!
		\sum_{\pm}
			\Res_{w = s-\frac{1}{2} \pm it_j +r}
				\frac{\pi \Gamma(w+k)\varphi_j^+(m,0, s-k-w)}{(4\pi(m-n))^{w+k} \sin(\pi w) \Gamma(k)}.
\end{align*}
Stirling's approximation and Lemma~\ref{lem:phi-plus-residue-bound} show that the exponential decay in the residues in the second line is $e^{-\frac{3\pi}{2} \vert \Im s \pm it_j \vert - \frac{\pi}{2} \vert t_j \vert} \ll e^{-2\pi \vert t_j \vert + \frac{3\pi}{2} \vert \Im s \vert}$, while the worst polynomial growth is $O((m+n+ \vert s \vert + \vert t_j \vert)^A)$ for some $A> 0$ depending linearly on $\vert k \vert$ and $\Re s $. Since $\Re s \in (0, \vert k \vert + \frac{1}{2})$ when these terms appear, we may take the constant $A$ to depend on $k$ alone.

Exponential decay in $\vert \Im w \vert$ within the integrand bounds the shifted contour integral to at most a constant multiple of the integrand near $\vert k \vert + \epsilon$. Stirling's approximation and~\eqref{eq:phi-plus-closed-form} then complete the proof of~\eqref{eq:phi-cross-upper-bound}.
\end{proof}

The corresponding properties of $\varphi_j^-$ may be obtained in a similar (though more complicated) way and are summarized in the following lemma.

\begin{lemma}
The function $\varphi_j^-(m,n,s)$ defined in~\eqref{eq:phi-definitions} admits meromorphic continuation to $s \in \mathbb{C}$, with poles at $s = k- \frac{3}{2} - m \pm it_j$ and $s = \frac{1}{2} - k - m \pm it_j$, for $m \in \mathbb{Z}_{\geq 0}$. If $t_j \in \mathbb{R}$ and $\Re s > 3 \vert k \vert + 1$, then for all $\epsilon > 0$ we have
\begin{align} \label{eq:phi-minus-upper-bound}
	\varphi_j^-(m,n,s)
	&\ll \frac{\log (m+n)}{(n(m+n))^{k+\frac{1}{2}}}
	\Big( \frac{\vert s - it_j \vert \vert s + it_j\vert}{(2n+m)(1+\lambda)\vert s \vert }\Big)^{\Re s -k-1}\\
	& \qquad
	\times \Big(
		1 + \Big( \frac{(n+m)\vert s \vert^2}{n\vert s - it_j \vert^2 \vert s + it_j\vert^2}\Big)^{\vert k \vert + \epsilon} \Big)
		e^{-\pi \vert t_j \vert + \frac{\pi}{2} \vert \Im s \vert}.
\end{align}
\end{lemma}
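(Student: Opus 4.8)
The plan is to follow the proof of Lemma~\ref{lem:phi-cross-behavior}, but applying the contour integral representation~\eqref{eq:incomplete-gamma-integral-representation} to \emph{both} incomplete gamma factors in the definition~\eqref{eq:phi-definitions} of $\varphi_j^-(m,n,s)$. Expanding $\Gamma(1-k,4\pi n y)$ and $\Gamma(1-k,4\pi (n+m)y)$ via~\eqref{eq:incomplete-gamma-integral-representation} introduces exponential factors $e^{-4\pi n y}$ and $e^{-4\pi (n+m)y}$ which, together with the factor $e^{2\pi(2n+m)y}$ already present, combine to exactly $e^{-2\pi(2n+m)y}$; after collecting $y$-powers, the leftover $y$-integral is then precisely $\varphi_j^+(m,n,s-2k-w_1-w_2)$. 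This yields
\begin{align*}
	\varphi_j^-(m,n,s)
	&= \frac{\pi^2 (4\pi)^{-2k}(n(n+m))^{-k}}{\Gamma(k)^2}
	\cdot \frac{1}{(2\pi i)^2} \iint_{C_1 \times C_2}
		\frac{\Gamma(w_1+k)\,\Gamma(w_2+k)}
			{(4\pi n)^{w_1}(4\pi (n+m))^{w_2}\sin(\pi w_1)\sin(\pi w_2)} \\
	& \hspace{50mm} \times \varphi_j^+(m,n,s-2k-w_1-w_2)
		\, dw_1\, dw_2,
\end{align*}
valid for $\Re s$ large, where each $C_i$ separates the poles of $\Gamma(w_i+k)$ from the poles at $w_i=0,1,2,\dots$ coming from $1/\sin(\pi w_i)$, and where Lemma~\ref{lem:phi-plus-behavior} guarantees analyticity of $\varphi_j^+(m,n,\,\cdot\,)$ for the relevant arguments.

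For the meromorphic continuation, I would shift $C_1$ and $C_2$ leftward across the poles of $\Gamma(w_i+k)$ and the remaining poles of $1/\sin(\pi w_i)$; each residue contributes a constant multiple of $\varphi_j^+(m,n,\,\cdot\,)$ at an argument shifted by a non-negative integer or a translate thereof by $k-1$, the latter encoding the $\Gamma(1-k)-Y^{1-k}/(1-k)+\cdots$ expansion of an incomplete gamma near $0$. Since $\varphi_j^+(m,n,\,\cdot\,)$ has poles only at $\tfrac12-k-r\pm it_j$ (Lemma~\ref{lem:phi-plus-behavior}), and since picking up one incomplete gamma's ``small-$Y$'' pole contributes the extra shift $k-1$ — exactly as the family $s=-\tfrac12\pm it_j-r=(\tfrac12-k)+(k-1)\pm it_j-r$ arises in Lemma~\ref{lem:phi-cross-behavior} — the two incomplete gammas produce the two families $s=\tfrac12-k-r\pm it_j$ and $s=(\tfrac12-k)+2(k-1)-r\pm it_j=k-\tfrac32-r\pm it_j$ asserted in the statement. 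One must also verify that the ``mixed'' contributions, in which one gamma supplies its constant term and the other its $Y^{1-k}$ term, are removable (so that no surviving family at $s=-\tfrac12-r\pm it_j$ appears); this should follow from the explicit form~\eqref{eq:phi-plus-closed-form} of $\varphi_j^+(m,0,\,\cdot\,)$, and is part of the bookkeeping discussed below.

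For the quantitative bound~\eqref{eq:phi-minus-upper-bound}, I would instead keep both contours on the line $\Re w_i=|k|+\epsilon$. The hypothesis $\Re s>3|k|+1$ is precisely what is needed so that $\Re\big(s-2k-w_1-w_2\big)>\tfrac12-k$ there, keeping $\varphi_j^+(m,n,\,\cdot\,)$ in its region of absolute convergence, so that~\eqref{eq:phi-plus-upper-bound-1} applies directly. Estimating $\Gamma(w_i+k)$ and the quotient $\Gamma(p\pm it_j)/\Gamma(p)$ inside~\eqref{eq:phi-plus-upper-bound-1} by Stirling (which is what produces the factor $e^{-\pi|t_j|+\frac\pi2|\Im s|}$, just as in the $\varphi_j^\times$ case), using the identity $(2n+m)(1+\lambda)=(\sqrt n+\sqrt{n+m})^2\asymp n+m$, and collecting the residues at $w_i=0,1,\dots,\lfloor|k|+\epsilon\rfloor$ crossed while moving the contours from $\Re w_i=-\epsilon$ to $\Re w_i=|k|+\epsilon$ — the $(w_1,w_2)=(0,0)$ residue being dominant and responsible for the factor $(n(n+m))^{-k}$, the higher residues and the shifted double integral being absorbed into the correction factor $1+(\cdots)^{|k|+\epsilon}$ — yields~\eqref{eq:phi-minus-upper-bound} after routine simplification. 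No $\delta_{[\Re s<\,\cdot\,]}$ correction term is needed here, in contrast to~\eqref{eq:phi-cross-upper-bound}, because the bound is asserted only deep inside the region of absolute convergence.

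The main obstacle is the double residue bookkeeping. Unlike Lemma~\ref{lem:phi-cross-behavior}, here two contours move independently past two families of poles, and the moving poles of $\varphi_j^+(m,n,s-2k-w_1-w_2)$ in the $w_i$-variables can collide with the fixed poles of $\Gamma(w_i+k)$ and $1/\sin(\pi w_i)$ as $s$ varies. One must fix an order for the contour shifts, keep track of which residues are genuine poles in $s$ and which are removable, and control the joint dependence of each residue on $s$, $t_j$, $m$, and $n$ simultaneously; this is where the bulk of the omitted technical work lies, and it is the reason the analysis is, as indicated in the text, ``similar (though more complicated)'' than that of $\varphi_j^\times$.
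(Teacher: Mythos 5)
Your proposal follows essentially the same route as the paper: writing $\varphi_j^-(m,n,s)$ as a double contour integral against two copies of~\eqref{eq:incomplete-gamma-integral-representation}, one for each incomplete gamma factor, recombining the exponentials so that the inner $y$-integral collapses to $\varphi_j^+(m,n,s-2k-w_1-w_2)$, shifting $C_1,C_2$ leftward to read off the poles, and shifting them to $\Re w_i=\vert k\vert+\epsilon$ to derive the growth bound via~\eqref{eq:phi-plus-upper-bound-1} and Stirling. Your double-integral formula agrees with the paper's~\eqref{eq:phi-minus-double-integral} after factoring out $(4\pi)^{2k}(n(n+m))^k$, your residue indices $q_1,q_2\in\{0,\dots,\vert k\vert-\tfrac12\}$ match~\eqref{eq:phi-minus-residues}, and your explanation of why two pole families $s=\tfrac12-k-r\pm it_j$ and $s=k-\tfrac32-r\pm it_j$ arise (one from each independent contour crossing) is actually more explicit than the paper's terse ``shifting the contours left produces residues which determine the poles.'' The observation that $\Re s>3\vert k\vert+1$ keeps $\varphi_j^+$ in its region of absolute convergence on the shifted contours, and hence avoids any $\delta$-corrected low-$\Re s$ term, is correct and is exactly why this lemma is cleaner than the deeper-shifted estimate~\eqref{eq:phi-minus-upper-bound-2} which appears later.
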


\begin{proof}
Using~\eqref{eq:incomplete-gamma-integral-representation}, we write $\varphi_j^-(m,n,s)$ as a double contour integral,
\begin{align} \label{eq:phi-minus-double-integral}
	&\varphi_j^-(m,n,s)\\
		&\quad 
		= \frac{\pi^2}{\Gamma(k)^2 (2\pi i)^2} \int_{C_1} \int_{C_2}
		\frac{\Gamma(w_1+k) \Gamma(w_2+k) }
				{(4\pi n)^{w_1+k}(4\pi (n+m))^{w_2+k}\sin(\pi w_1) \sin(\pi w_2)}
		\\
		&\qquad \qquad\qquad\quad \times
		\Big(\int_0^\infty \! y^{s-k-\frac{1}{2}-w_1-w_2}
			e^{-2\pi (2n+m) y} K_{it_j}(2\pi m y) \frac{dy}{y}\Big) dw_2 dw_1 \\
		&\quad= \frac{\pi^2}{(2\pi i)^2} 
		\!\!\int_{C_1} \int_{C_2} \!\!\!\!
		\frac{\Gamma(w_1+k) \Gamma(w_2+k)\varphi_j^+(m,n,s-2k-w_1-w_2) dw_2 dw_1}
				{\Gamma(k)^2 (4\pi n)^{w_1+k}(4\pi (n+m))^{w_2+k}\sin(\pi w_1) \sin(\pi w_2)},
\end{align}
where $C_1$ and $C_2$ are instances of the contour $C$ described in Lemma~\ref{lem:phi-cross-behavior} and $\Re s > k +\frac{1}{2} + 2\max\{w : w \in C\}$ to begin. As in Lemma~\ref{lem:phi-cross-behavior}, shifting the contours left produces residues which determine the poles of $\varphi_j^-$. To produce growth estimates, we shift $C_1$ and $C_2$ to the lines $\Re w_1 = \Re w_2 = \vert k \vert + \epsilon$, extracting a series of single contour integrals and a double sum of residues from $1/\sin(\pi w_1)\sin(\pi w_2)$ equal to
\begin{align} \label{eq:phi-minus-residues}
	\sum_{q_1,q_2=0}^{\vert k \vert - \frac{1}{2}} \!
		\frac{(-1)^{q_1 + q_2} \Gamma(q_1+k)\Gamma(q_2 + k) \varphi_j^+(m,n,s-2k-q_1 - q_2)}
			{\Gamma(k)^2 (4\pi n)^{k+q_1} (4\pi(m+n))^{k+q_2}}.
\end{align}
Exponential decay in vertical strips implies that the contour integrals are bounded by their values near the near axis, whereby the bound~\eqref{eq:phi-plus-upper-bound-1} and Stirling's approximation gives~\eqref{eq:phi-minus-upper-bound}.
\end{proof}

\begin{remark} \label{rem:coarse-bound}
The upper bounds for $\varphi_j^+$, $\varphi_j^\times$, and $\varphi_j^-$ given in~\eqref{eq:phi-plus-upper-bound-1},~\eqref{eq:phi-cross-upper-bound}, and~\eqref{eq:phi-minus-upper-bound}, respectively, imply that $R(\phi_{j\mathfrak{a}},s)$ satisfies a bound of the form
\begin{align} \label{eq:unquantified-RS-bound}
	R(\phi_{j\mathfrak{a}},s) 
	\ll  (\vert s \vert + \vert t_j \vert)^A 
	e^{-\frac{\pi}{2}\vert t_j \vert + \frac{\pi}{2} \vert \Im s \vert},
\end{align}
for sufficiently large $\Re s$ and some $A>0$. Indeed, such a bound holds for each of $I_{j\mathfrak{a}}^+(s)$, $I_{j\mathfrak{a}}^\times(s)$, $I_{j\mathfrak{a}}^-(s)$, and $I_{j\mathfrak{a}}^0(s)$, by dyadic subdivision of their defining sums, polynomial growth bounds on $c_\mathfrak{a}^\pm(n)$, and a bound for $\rho_{j\mathfrak{a}}(n)$ such as Lemma~\ref{lem:Maass-form-coefficient-average}.

Note that~\eqref{eq:unquantified-RS-bound} implies that the contour integral~\eqref{eq:inner-product-vs-Rankin-Selberg-transform} for $\langle F, \mu_j \rangle$ converges for $\Re s$ sufficiently large. More specifically, it implies that $\langle F, \mu_j \rangle \ll \vert t_j \vert^A e^{-\frac{\pi}{2} \vert t_j \vert}$ for some $A>0$.
These coarse estimates also show that the integral in~\eqref{eq:inner-product-vs-Rankin-Selberg-transform} may be truncated to $\vert \Im s \vert = c \log(1+ \vert t_j \vert)$ for some $c>0$ while introducing negligible error. We assume this henceforth. \hfill //
\end{remark}

We conclude this section with an upper bound for $\varphi_j^-$ obtained via~\eqref{eq:phi-plus-upper-bound-2}. We assume $\lambda \gg \vert t_j \vert^{-1-\epsilon}$. We also assume that $\vert s \vert \ll \log \vert t_j \vert$, which holds without loss of generality by Remark~\ref{rem:coarse-bound}.  
The bound~\eqref{eq:phi-plus-upper-bound-2} implies that the contribution of the residues~\eqref{eq:phi-minus-residues} is
\[
	O\Big(
		\frac{\vert t_j \vert^{\Re s - k - \frac{3}{2} + \epsilon}}
					{(n(m+n))^{\frac{1}{2}\Re s  + \frac{k}{2} - \frac{1}{4}}}
		e^{-\pi \vert t_j \vert + \frac{\pi}{2} \vert \Im s \vert}
		\sum_{q_1,q_2}^{\vert k \vert - \frac{1}{2}} \Big(\frac{m+n}{n} \Big)^{\frac{q_1-q_2}{2}} \vert t_j \vert^{-q_1-q_2}
	\Big).
\]
Since $\lambda \asymp \sqrt{n}/\sqrt{n+m}$, the estimate $\lambda \gg \vert t_j \vert^{-1-\epsilon}$ implies that $\vert t_j \vert^{2+2\epsilon} \gg \frac{m+n}{n}$. Thus, up $\vert t_j \vert^\epsilon$-factors, the $(q_1,q_2)$-sum is dominated by the $q_1 = q_2 =0$ term. Our estimate for the $q_1=q_2=0$ term likewise acts as a bound for the shifted double contour and any of the single contour integrals associated to residues from $1/\sin(\pi w_1)$ or $1/\sin(\pi w_2)$.

The contribution of the residues from the poles of $\varphi_j^+(m,n,s-2k-w_1-w_2)$ (as either single contour integrals or residues from single contour integrals) is $O((m+n+\vert s \vert + \vert t_j \vert)^A e^{-2\pi \vert t_j \vert + \frac{3\pi}{2} \vert \Im s \vert})$ for some $A>0$, by Lemma~\ref{lem:phi-plus-residue-bound} and Stirling's approximation. (Note that at this level of precision it suffices to consider only the exponential factor in Stirling's approximation.) Thus
\begin{align} \label{eq:phi-minus-upper-bound-2}
	\varphi_j^-(m,n,s)
	& \ll \frac{\vert t_j \vert^{\Re s - k - \frac{3}{2} + \epsilon}}
				{(n(m+n))^{\frac{1}{2}\Re s  + \frac{k}{2} - \frac{1}{4}}}
		e^{-\pi \vert t_j \vert + \frac{\pi}{2} \vert \Im s \vert} \\
	& \quad 
		+ \delta_{[\Re s < \vert k - 1 \vert - \frac{1}{2}]}
		\big(m+n+\vert s \vert + \vert t_j \vert\big)^A
		e^{-2\pi \vert t_j \vert + \frac{3\pi}{2} \vert \Im s \vert}.
\end{align}

\subsection{Sum Truncation} \label{sec:sum-truncation}

For some $(m,n)$, the functions $\varphi_j^+$, $\varphi_j^\times$, and $\varphi_j^-$ may be made arbitrarily small by taking $\Re s$ very large. For example,~\eqref{eq:phi-cross-upper-bound} implies that $\varphi_j^\times(m,n,s)$ decays with respect to $\vert t_j \vert$ as $\Re s \to \infty$ provided $m< \vert t_j \vert^{2+\delta}$, for any fixed $\delta > 0$. In other words, we may truncate $I_{j \mathfrak{a}}^\times(s)$ to $m \ll \vert t_j \vert^{2+\delta}$ in our estimate for $\langle F, \mu_j \rangle$, with a negligible error. Likewise,~\eqref{eq:phi-plus-upper-bound-1} and~\eqref{eq:phi-cross-upper-bound} imply that $I_{j \mathfrak{a}}^0(s)$ may be truncated to $m \ll \vert t_j \vert^{2+\delta}$.

We claim that $I_{j\mathfrak{a}}^+(s)$ and $I_{j\mathfrak{a}}^-(s)$ may be truncated to $n(m+n) \ll \vert t_j \vert^{2+\delta}$ at the cost of negligible error. To prove this, we follow~\cite[(3.25)]{Jutila96} and subdivide cases based on whether $\lambda \ll \vert t_j \vert^{-1}$.
\begin{enumerate}
\item[a.] If $\lambda \ll \vert t_j \vert^{-1}$ and $n(m+n) \gg \vert t_j \vert^{2+\delta}$, then $\lambda^2 \ll \vert t_j \vert^{-2}$, so that $(2n+m)^2 \gg n(n+m) \vert t_j \vert^2$ after simplifying. The lower bound $n(m+n) \gg \vert t_j \vert^{2+\delta}$ implies that $2n+m \gg \vert t_j \vert^{2+\delta/2}$, hence $n+m \gg \vert t_j \vert^{2+\delta/2}$. In this case,~\eqref{eq:phi-plus-upper-bound-1} and~\eqref{eq:phi-minus-upper-bound} produce arbitrary polynomial improvements in $\vert t_j \vert$ as $\Re s \to \infty$.
\item[b.] If $\lambda \gg \vert t_j \vert^{-1}$ and $n(m+n) \gg \vert t_j \vert^{2+\delta}$, we instead argue using the upper bounds~\eqref{eq:phi-plus-upper-bound-2} and~\eqref{eq:phi-minus-upper-bound-2}.
\end{enumerate}

Let $J_{j\mathfrak{a}}^+$ and $J_{j\mathfrak{a}}^-$ denote the truncations of $I_{j\mathfrak{a}}^+$ and $I_{j\mathfrak{a}}^-$ to $n(m+n) \ll \vert t_j \vert^{2+\delta}$. Likewise, define $J_{j\mathfrak{a}}^\times$ and $J_{j\mathfrak{a}}^0$ as the truncations of $I_{j\mathfrak{a}}^\times$ and $I_{j\mathfrak{a}}^0$ to $m \ll \vert t_j \vert^{2+\delta}$.

\subsection{Estimation of the Truncated Sums}

To complete our estimation of the inner product $\langle F, \mu_j \rangle$, we bound the sums $J_{j \mathfrak{a}}^+(s)$, $J_{j \mathfrak{a}}^-(s)$, $J_{j \mathfrak{a}}^\times(s)$, and $J_{j \mathfrak{a}}^0(s)$ on the line $\Re s =\delta$, where $\delta$ is the same constant used to define the truncation conditions. We assume that $\vert \Im s \vert = O(\log \vert t_j \vert)$, by Remark~\ref{rem:coarse-bound}.

We first consider $J_{j \mathfrak{a}}^+(s)$, which we truncated to $n(m+n) \ll \vert t_j \vert^{2+\delta}$. We subdivide into dyadic intervals, with $m \sim M$ and $n \sim L$. On each dyadic subsum, we estimate $\varphi_j^+(m,n+\varkappa_\mathfrak{a},s)$ using~\eqref{eq:phi-plus-upper-bound-2}, which outperforms~\eqref{eq:phi-plus-upper-bound-1} in these regimes.
Since $L(L+M) \ll \vert t_j \vert^{2+\delta}$ and $\lambda^2 \asymp n/(n+m)$, we have $\lambda \gg \vert t_j \vert^{-1-\delta}$. This observation, and the free assumption $s = O(\log \vert t_j \vert)$, shows that~\eqref{eq:phi-plus-upper-bound-2} bounds a given dyadic sum by
\begin{align} \label{eq:phi-plus-dyadic}
	& \vert t_j \vert^{k - \frac{3}{2} + O(\delta)}
	e^{-\pi \vert  t_j \vert + \frac{\pi}{2} \vert \Im s \vert} 
	\sum_{\substack{m \sim M \\ n \sim L}} 
		\frac{\vert c_\mathfrak{a}^+(n+m) c_\mathfrak{a}^+(n) \rho_{j\mathfrak{a}}(m) \vert}
			{(L(L+M))^{\frac{k}{2}-\frac{1}{4}}}.
\end{align}

In the sum within~\eqref{eq:phi-plus-dyadic}, we apply Cauchy--Schwarz, Lemma~\ref{lem:Muller-coefficient-averages}, and Lemma~\ref{lem:Maass-form-coefficient-average} to compute
\begin{align}
	& \sum_{n \sim L} \vert c_\mathfrak{a}^+(n) \vert 
			\sum_{m \sim M} \vert c_\mathfrak{a}^+(n+m) \rho_{j \mathfrak{a}}(m) \vert \\
	&\qquad
	\ll \sum_{n \sim L} \vert c_\mathfrak{a}^+(n) \vert
		\Big(\sum_{q \sim M+L} \vert c_\mathfrak{a}^+(q) \vert^2 \Big)^{\frac{1}{2}}
		\Big(\sum_{m \sim M} \vert \rho_{j\mathfrak{a}}(m) \vert^2 \Big)^{\frac{1}{2}} \\
	&\qquad
	\ll L^{\frac{1}{2}} \big(L(L+M)^{k+ \vert k-1 \vert} \big)^{\frac{1}{2}}
		\big(M+\vert t_j \vert \big)^{\frac{1}{2}} e^{\frac{\pi}{2} \vert t_j \vert}.
\end{align}

Thus the $(L,M)$-dependence in~\eqref{eq:phi-plus-dyadic} is $L^\frac{1}{2} M^\frac{1}{2} (L(L+M))^{\frac{1}{4}+\frac{1}{2}\vert k-1 \vert}$ or $L^{\frac{1}{2}} (L(L+M))^{\frac{1}{4}+\frac{1}{2}\vert k-1 \vert}$.
In the first case, the dominant dyadic interval takes $M \sim \vert t_j \vert^2$ and $L \sim 1$, while in the second case we dominate by the $M \sim 1$ and $L \sim \vert t_j \vert$ subintervals (up to $\vert t_j \vert^\delta$ factors). Either way, we conclude that
\begin{align} \label{eq:J-plus-upper-bound}
	J_{j\mathfrak{a}}^+(s) \ll  
		\vert t_j \vert^{k + \vert k-1 \vert + O(\delta)}
		e^{-\frac{\pi}{2} \vert  t_j \vert + \frac{\pi}{2} \vert \Im s \vert}.
\end{align}

Our treatment of $J_{j\mathfrak{a}}^-(s)$ is essentially the same. We again subdivide into dyadic intervals, with $m \sim M$ and $n \sim L$, then apply~\eqref{eq:phi-minus-upper-bound-2}. The contribution from $(m+n+ \vert s \vert + \vert t_j \vert)^A e^{-2\pi \vert t_j \vert + \frac{3\pi}{2} \vert \Im s \vert}$ within $\varphi_j^-(m,n,s)$ is clearly $O( \vert t_j \vert^{A'} e^{-2\pi \vert t_j \vert})$ for some $A' > 0$, which will be exponentially non-dominant. Otherwise,~\eqref{eq:phi-minus-upper-bound-2} bounds a given dyadic interval by
\[
	\vert t_j \vert^{- k - \frac{3}{2} + O(\delta)} 
		e^{-\pi \vert t_j \vert + \frac{\pi}{2} \vert \Im s \vert}
		\sum_{\substack{m \sim M \\ n \sim L}} 
		\frac{\vert c_\mathfrak{a}^-(n+m) c_\mathfrak{a}^-(n) \rho_{j\mathfrak{a}}(m) \vert}
			{(L(L+M))^{\frac{k}{2}-\frac{1}{4}}},
\]
which matches the $J_{j\mathfrak{a}}^+(s)$ case except that we've multiplied by $\vert t_j \vert^{-2k}$ and replaced $c_\mathfrak{a}^+$ with $c_\mathfrak{a}^-$. By Lemma~\ref{lem:Muller-coefficient-averages}, the change $c_\mathfrak{a}^+ \mapsto c_\mathfrak{a}^-$ does not worsen our estimate. We conclude that
\begin{align} \label{eq:J-minus-upper-bound}
	J_{j\mathfrak{a}}^-(s) \ll 
		\vert t_j \vert^{\vert k-1 \vert-k + O(\delta)}
		e^{-\frac{\pi}{2} \vert  t_j \vert + \frac{\pi}{2} \vert \Im s \vert} .
\end{align}

We next consider $J_{j\mathfrak{a}}^\times(s)$. By applying~\eqref{eq:phi-cross-upper-bound} and disregarding the non-dominant contribution of $(m+n+ \vert s \vert + \vert t_j \vert)^A e^{-2\pi \vert t_j \vert + \frac{3\pi}{2} \vert \Im s \vert}$, we find that
\[
	J_{j\mathfrak{a}}^\times(s) \ll
		\vert t_j \vert^{-2+O(\delta)} e^{-\pi \vert t_j \vert + \frac{\pi}{2} \vert \Im s \vert} \!\!\!\!
		\sum_{m < \vert t_j \vert^{2+\delta}} \!\!\!\!
			\frac{\vert \rho_{j \mathfrak{a}}(m) \vert}{m^{-1/2}} \!\!\!\!
		\sum_{n=1-\lceil \varkappa_\mathfrak{a} \rceil}^{m-1} \!\!\!
		\frac{\vert c_\mathfrak{a}^+(n) c_\mathfrak{a}^-(m-n) \vert}{(m-n-\varkappa_\mathfrak{a})^k},
\]
under the standing assumptions on $s$. To estimate the sums, we map $n \mapsto m-n$ in the $n$-sum, restrict $m$ to a dyadic interval $m \sim M$, swap the order of summation, and apply Cauchy--Schwarz and Lemma~\ref{lem:Muller-coefficient-averages}:
\begin{align*}
	& \sum_{m \sim M} 
			\frac{\vert \rho_{j \mathfrak{a}}(m) \vert}{m^{-1/2}}
		\sum_{n=1}^{m}
		\frac{\vert c_\mathfrak{a}^+(m-n) c_\mathfrak{a}^-(n) \vert}{(n-\varkappa_\mathfrak{a})^k} \\
	& \qquad \ll
	M^{\frac{1}{2}} \sum_{n\leq 2M} \frac{\vert c_\mathfrak{a}^-(n) \vert}{n^k}
		\Big(\sum_{m \sim M} \vert \rho_{j\mathfrak{a}}(m) \vert^2\Big)^{\frac{1}{2}}
		\Big(\sum_{m \sim M} \vert c_\mathfrak{a}^+(m) \vert^2\Big)^{\frac{1}{2}} \\
	& \qquad \ll
	M^{\frac{1}{2} k + \frac{1}{2} \vert k -1 \vert + \frac{1}{2}}
		(M+ \vert t_j \vert)^{\frac{1}{2}} e^{\frac{\pi}{2} \vert t_j \vert} 
		\sum_{n\leq 2M} \frac{\vert c_\mathfrak{a}^-(n) \vert}{n^k}.
\end{align*}
The remaining $n$-sum has size $O(M^{\frac{1}{2}-\frac{1}{2} k + \frac{1}{2} \vert k -1 \vert} \log M)$ by dyadic subdivision, Cauchy--Schwarz, and Lemma~\ref{lem:Muller-coefficient-averages}.
The largest overall contribution to $J_{j\mathfrak{a}}^\times$ appears when $M \sim \vert t_j \vert^{2+\delta}$, which gives the estimate
\begin{align} \label{eq:J-cross-upper-bound}
	J_{j\mathfrak{a}}^\times(s) \ll
	e^{-\frac{\pi}{2} \vert t_j \vert + \frac{\pi}{2} \vert \Im s \vert}
	\begin{cases}
		\vert t_j \vert^{2k-1+O(\delta)}, & k > 1, \\
		\vert t_j \vert^{3-2k + O(\delta)}, & k <1.
	\end{cases}
\end{align}

Finally, we consider $J_{j\mathfrak{a}}^0(s)$, which we treat according to the four-part decomposition of $I_{j\mathfrak{a}}^0(s)$ in~\eqref{eq:I-0-definition}. Applying~\eqref{eq:phi-plus-upper-bound-1} and~\eqref{eq:phi-cross-upper-bound} and ignoring the non-dominant contribution of $(m+n+ \vert s \vert + \vert t_j \vert)^A e^{-2\pi \vert t_j \vert + \frac{3\pi}{2} \vert \Im s \vert}$ in the latter produces
\begin{align*}
	J_{j\mathfrak{a}}^0(s)
	\ll e^{-\pi \vert t_j \vert + \frac{\pi}{2} \vert \Im s \vert} \vert t_j \vert^{O(\delta)} \!\!\!
	& \sum_{m \ll \vert t_j \vert^{2+\delta}} \!\!
	\Big( \frac{\vert c_\mathfrak{a}^+(m) \rho_{j\mathfrak{a}}(m) \vert}
				{m^{k-\frac{1}{2}} \vert t_j \vert^{2-2k}} 
		+ \frac{\vert c_\mathfrak{a}^+(m) \rho_{j\mathfrak{a}}(m) \vert}{m^{\frac{1}{2}}} 
		 \\
	& \qquad \quad
	+ \frac{\vert c_\mathfrak{a}^-(m) \rho_{j\mathfrak{a}}(m) \vert}{m^{k-\frac{1}{2}} \vert t_j \vert^2} 
	+ \frac{\vert c_\mathfrak{a}^-(m) \rho_{j\mathfrak{a}}(m) \vert}{m^{\frac{1}{2}} \vert t_j \vert^{2k}} 
	\Big).
\end{align*}
For each term in the parenthetical, we subdivide dyadically on $m$, then apply Cauchy--Schwarz, Lemma~\ref{lem:Muller-coefficient-averages}, and Lemma~\ref{lem:Maass-form-coefficient-average}. In each term, the largest dyadic contribution has $m \sim \vert t_j \vert^{2+\delta}$. The first two terms contribute $O(\vert t_j \vert^{k+ \vert k-1 \vert + O(\delta)} e^{\frac{\pi}{2} \vert t_j \vert})$, while the last two are $O(\vert t_j \vert^{\vert k-1 \vert -k + O(\delta)} e^{\frac{\pi}{2} \vert t_j \vert})$. We conclude that
\begin{align} \label{eq:J-0-upper-bound}
	J_{j\mathfrak{a}}^0(s)
	\ll e^{-\frac{\pi}{2} \vert t_j \vert + \frac{\pi}{2} \vert \Im s \vert} 
	\cdot \begin{cases}
		\vert t_j \vert^{2k-1+O(\delta)}, & k > 1, \\
		\vert t_j \vert^{1+O(\delta)}, & k = \frac{1}{2}, \\
		\vert t_j \vert^{1-2k+O(\delta)}, & k < 0.
	\end{cases}
\end{align}

By combining the upper bounds derived in this section, we complete our estimation of $\langle F, \mu_j \rangle $ and prove Theorem~\ref{thm:triple-inner-product-bound}:

\begin{proof}[Proof of Theorem~\ref{thm:triple-inner-product-bound}]
We estimate~\eqref{eq:inner-product-vs-Rankin-Selberg-transform}, truncating the contour to $\vert \Im s \vert \leq c \log(1+ \vert t_j \vert)$ with negligible error by Remark~\ref{rem:coarse-bound}. We write $R(\phi_{j\mathfrak{a}},s) = I_{j\mathfrak{a}}^+(s) + I_{j\mathfrak{a}}^-(s) + I_{j\mathfrak{a}}^\times(s) + I_{j\mathfrak{a}}^0(s)$, truncating each term in the decomposition as described in section~\S{\ref{sec:sum-truncation}}. Within the truncated contour, we shift to $\Re s = \delta$ (with negligible error) and apply~\eqref{eq:J-plus-upper-bound},~\eqref{eq:J-minus-upper-bound},~\eqref{eq:J-cross-upper-bound}, and~\eqref{eq:J-0-upper-bound} to produce
\begin{align*}
	\langle F, \mu_j \rangle
	& \ll \sum_{\mathfrak{a}} e^{-\frac{\pi}{2} \vert t_j \vert} 
	\big( \vert t_j \vert^{2k-1+O(\delta)} + \vert t_j \vert^{3-2k+O(\delta)} \big) \\
	& \qquad \times \int_{(\delta)} 
			\big\vert \delta_{[\mathfrak{a}=\infty]}s g(s) + (s-1) g(1-s) \gamma_\mathfrak{a}(s)\big\vert 
			\cdot \vert \zeta^*(2s) \vert e^{\frac{\pi}{2} \vert \Im s \vert} ds.
\end{align*}
The integral is $O_{\mathfrak{a},\delta}(1)$, and the proof follows by taking $\delta$ near $0$.
\end{proof}


\section{Bounding \texorpdfstring{$D_\ell(s)$}{Dh(s)} in Vertical Strips, Part II} \label{sec:D_h-growth-II}

In section~\S{\ref{sec:D_h-growth-bounds}}, we proved Proposition~\ref{prop:D_h-preliminary-growth}, which reduced the problem of bounding $D_\ell(s)$ to the problem of bounding $\langle \mathcal{V}_\mathcal{H}, P_\ell(\cdot, \overline{s}) \rangle$. In this section, we estimate the latter to prove the following theorem.

\begin{theorem} \label{thm:D_h-growth}
Fix $\epsilon > 0$ small. In the vertical strip $\Re s \in (\frac{1}{2}+\epsilon,\frac{3}{2}+\epsilon)$ away from poles of $D_\ell(s)$, we have
\[
	D_\ell(s) 
	\ll_{\epsilon} \ell^\epsilon \vert s \vert^\epsilon \cdot
	\big(\vert s \vert^{\frac{5}{2}} + \ell^{\frac{1}{4}} \vert s \vert^{2}
		+ \ell \vert s \vert^{-\frac{3}{2}} \big)^{\frac{3}{2} - \Re s}.
\]
\end{theorem}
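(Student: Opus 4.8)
The plan is to establish the estimate on the two vertical edges $\Re s = \tfrac12 + \epsilon$ and $\Re s = \tfrac32 + \epsilon$ of the strip and then interpolate via the Phragm\'en--Lindel\"of principle. On the line $\Re s = \tfrac32 + \epsilon$ the Dirichlet series~\eqref{eq:D_h-definition-redux} converges absolutely, and splitting the sum at $n = \ell$ and applying the bound $H(n) \ll_\epsilon n^{1/2+\epsilon}$ from Lemma~\ref{lem:H-growth} gives $D_\ell(s) \ll_\epsilon \ell^\epsilon$ with no dependence on $\vert \Im s \vert$. The substance lies on the line $\Re s = \tfrac12 + \epsilon$, where by Proposition~\ref{prop:D_h-preliminary-growth} it suffices to bound $\langle \mathcal{V}_\mathcal{H}, P_\ell(\cdot,\overline s)\rangle / \Gamma(s+\tfrac12)$; the elementary terms already incorporated in Proposition~\ref{prop:D_h-preliminary-growth} contribute $\ll_\epsilon \ell^\epsilon \vert s \vert^{-3/2}(\ell + \vert s \vert)$, which is absorbed into the claimed bound and in particular supplies the term $\ell \vert s \vert^{-3/2}$.

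I would bound $\langle \mathcal{V}_\mathcal{H}, P_\ell(\cdot,\overline s)\rangle$ through the spectral decomposition $\Sigma_{\mathrm{disc}}(s) + \Sigma_{\mathrm{cont}}(s)$ of section~\S\ref{sec:spectral-expansion}. The continuous contribution $\Sigma_{\mathrm{cont}}(s)/\Gamma(s+\tfrac12)$ is routine: Stirling's approximation governs the ratio of gamma factors, the estimate~\eqref{eq:Eisenstein-Fourier-coefficient-bound} gives $\varphi_{\mathfrak{a}\ell}(\tfrac12+it) \ll_\epsilon \ell^\epsilon \log(2+\vert t \vert)$, and the Rankin--Selberg method together with convexity bounds $\langle \mathcal{V}_\mathcal{H}, E_\mathfrak{a}(\cdot,\tfrac12+it)\rangle$ polynomially in $\vert t \vert$. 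Since $\mathcal{V}_\mathcal{H}$ is independent of $\ell$, the continuous spectrum contributes $\ll_\epsilon \ell^\epsilon$ times a power of $\vert s \vert$ dominated by the discrete contribution.

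The discrete spectrum is the crux. Because Maass cusp forms are orthogonal to Eisenstein series and to norm-squares of theta functions (cf.~\cite[Remark 2]{Nelson21}), one has $\langle \mathcal{V}_\mathcal{H}, \mu_j \rangle = \langle y^{3/2}\vert \mathcal{H} \vert^2, \mu_j \rangle$, which Theorem~\ref{thm:triple-inner-product-bound} (taking $k = \tfrac32$) bounds by $\ll_\epsilon \vert t_j \vert^{2+\epsilon} e^{-\frac{\pi}{2}\vert t_j \vert}$. Writing $s = \sigma + i\tau$ with $\sigma = \tfrac12 + \epsilon$ and applying Stirling to $\Gamma(s-\tfrac12+it_j)\Gamma(s-\tfrac12-it_j)/(\Gamma(s)\Gamma(s+\tfrac12))$, the exponentials collapse via $\vert \tau+t_j \vert + \vert \tau-t_j \vert = 2\max(\vert \tau \vert, \vert t_j \vert)$ to an overall decay $e^{-\frac{\pi}{2}\vert t_j \vert}$, leaving a polynomial weight that is largest where $\vert t_j \vert$ lies within $O(\vert \tau \vert)$ of $\pm\tau$. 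I would then subdivide dyadically both in $\vert t_j \vert$ and in the distance $1 + \big\vert\, \vert t_j \vert - \vert \tau \vert \,\big\vert$ to this resonance, apply Cauchy--Schwarz in the shape $\sum_j \vert \rho_j(\ell) \vert\, w_j \le (\sum_j \vert \rho_j(\ell) \vert^2/\cosh\pi t_j)^{1/2}(\sum_j \cosh(\pi t_j)\, w_j^2)^{1/2}$, estimate the first factor by Lemma~\ref{lem:Kuznetsov-long}, and estimate the second by the local Weyl law counting $t_j$ in short intervals together with Stirling. The dominant range is $\vert t_j \vert \sim \vert \tau \vert$, where Lemma~\ref{lem:Kuznetsov-long} contributes $\vert s \vert^2 + \ell^{1/2+\epsilon}$; taking square roots produces the terms $\vert s \vert^{5/2}$ and $\ell^{1/4}\vert s \vert^{2}$, and the $\ell$-uniformity of the error term in Lemma~\ref{lem:Kuznetsov-long} is exactly what yields the $\ell^{1/4}$. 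This gives $D_\ell(s) \ll_\epsilon \ell^\epsilon \vert s \vert^\epsilon (\vert s \vert^{5/2} + \ell^{1/4}\vert s \vert^2 + \ell \vert s \vert^{-3/2})$ on $\Re s = \tfrac12 + \epsilon$.

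Finally, I would pass to the intermediate strip by applying the Phragm\'en--Lindel\"of principle to $(s-1)(s-\tfrac32)\,D_\ell(s)$, which is holomorphic and of finite order on $\tfrac12 + \epsilon \le \Re s \le \tfrac32 + \epsilon$ — finiteness of order following from crude estimates for each ingredient, e.g.\ the trivial bound $\langle \mathcal{V}_\mathcal{H}, \mu_j \rangle \ll 1$ combined with Lemma~\ref{lem:Kuznetsov-long} and Stirling. Interpolating the two edge bounds across the strip of width one attaches the exponent $\tfrac32 - \Re s$ to the $\Re s = \tfrac12 + \epsilon$ bound, and dividing out $(s-1)(s-\tfrac32)$, which costs only $O(1)$ away from the poles $s = 1, \tfrac32$ of $D_\ell(s)$, yields the stated estimate. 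I expect the resonant analysis of $\Sigma_{\mathrm{disc}}(s)$ in the third step — balancing the polynomial growth in $\vert s \vert$ of the gamma-factor weight against the density of the Laplace spectrum while retaining control over the dependence on $\ell$ — to be the main obstacle.
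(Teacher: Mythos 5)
Your proposal is correct and follows essentially the same route as the paper: bound $D_\ell(s)$ trivially on $\Re s = \tfrac{3}{2}+\epsilon$, bound it on $\Re s = \tfrac{1}{2}+\epsilon$ via Proposition~\ref{prop:D_h-preliminary-growth} together with the spectral decomposition $\Sigma_{\mathrm{disc}}+\Sigma_{\mathrm{cont}}$ (using Theorem~\ref{thm:triple-inner-product-bound}, Stirling, Cauchy--Schwarz, and Lemma~\ref{lem:Kuznetsov-long} for the discrete part, and Proposition~\ref{prop:V_H-Rankin-Selberg-bound} and~\eqref{eq:Eisenstein-Fourier-coefficient-bound} for the continuous part), then interpolate by Phragm\'en--Lindel\"of. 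Your explicit insertion of the factor $(s-1)(s-\tfrac{3}{2})$ before convexity is a slightly more careful bookkeeping of the pole issue than the paper's terse ``away from poles,'' and your two-parameter dyadic subdivision near the resonance $\vert t_j\vert\sim\vert\tau\vert$ is a mild refinement of the paper's division into unit sub-intervals, but the ingredients and the outcome are the same.
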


Our proof of Theorem~\ref{thm:D_h-growth} follows the decomposition of $\langle \mathcal{V}_\mathcal{H}, P_\ell(\cdot,\overline{s}) \rangle$ into discrete and continuous spectra.

\subsection{Growth of the Discrete Spectrum \texorpdfstring{$\Sigma_{\mathrm{disc}}$}{}}

For convenience, recall that the discrete spectrum equals
\[
\Sigma_{\mathrm{disc}}(s) 
		:=  \frac{\sqrt{\pi}}{(4\pi \ell)^{s-\frac{1}{2}} \Gamma(s)}
		\sum_j \rho_j(\ell) \Gamma(s - \tfrac{1}{2} + it_j)\Gamma(s- \tfrac{1}{2} -it_j)
			\langle \mathcal{V}_\mathcal{H}, \mu_j \rangle.
\]
By the comments at the start of section~\S{\ref{sec:jutila-triple-inner-products}}, we may replace $\mathcal{V}_\mathcal{H}$ here with the unregularized form $y^{3/2} \vert \mathcal{H}(z) \vert^2$. Then, by Theorem~\ref{thm:triple-inner-product-bound} and Stirling,
\begin{align*}
	\Sigma_{\mathrm{disc}}(s)
	& \ll \ell^{\frac{1}{2}-\Re s} \vert s \vert^{\frac{1}{2}-\Re s}
		e^{\frac{\pi}{2} \vert \Im s \vert}
	\sum_j \frac{\vert \rho_j(\ell)\vert}{\cosh \frac{\pi}{2} t_j} \cdot \vert t_j \vert^{2+\epsilon} \\
	& \qquad \qquad \times 
	\vert s + it_j \vert^{\Res - 1} \vert s - it_j \vert^{\Re s -1}
	e^{-\pi \max(\vert t_j \vert, \vert \Im s \vert)}.
\end{align*}
Here we have used that $t_j \in \mathbb{R}$ for Maass forms on $\Gamma_0(4)$.

By Lemma~\ref{lem:Kuznetsov-long}, the mass in the $t_j$-sum in $\Sigma_{\mathrm{disc}}(s)$ concentrates to within $\vert t_j \vert < \vert \Im s \vert$. Thus
\begin{align} \label{eq:concentrated-disc}
	\Sigma_{\mathrm{disc}}(s)
	& \ll \ell^{\frac{1}{2}-\Re s} 
	\frac{\vert s \vert^{\frac{5}{2}-\Re s + \epsilon}}{e^{\frac{\pi}{2} \vert \Im s \vert}} \\
	& \qquad \times 
	\sum_{\vert t_j \vert < \vert \Im s \vert}
		\frac{\vert \rho_j(\ell)\vert}{\cosh \frac{\pi}{2} t_j} 
	\vert s + it_j \vert^{\Res - 1} \vert s - it_j \vert^{\Re s -1}.
\end{align}
Lemma~\ref{lem:Kuznetsov-long} implies a short-interval second moment estimate of the form
\begin{align} \label{eq:short-interval-kuznetsov}
	\sum_{X \leq \vert t_j \vert \leq X+1} 
	\frac{\vert \rho_{j}(\ell) \vert^2}{\cosh \pi t_j }
	\ll_{N,\epsilon} X^{1+\epsilon} \ell^\epsilon + \ell^{\frac{1}{2}+\epsilon}.
\end{align}
By dividing the range of summation in~\eqref{eq:concentrated-disc} into sub-intervals of length $1$ and applying Cauchy--Schwarz and~\eqref{eq:short-interval-kuznetsov} to each sub-interval, we find
\begin{align} \label{eq:discrete-bound}
	\Sigma_{\mathrm{disc}}(s)
	& \ll_{\Re s,\epsilon} \ell^{\frac{1}{2}-\Re s+\epsilon}
	\vert s \vert^{2+\epsilon} 
	\big( \vert s \vert^{\Re s} + 1 \big)
	\big(\vert s \vert^{\frac{1}{2}} + \ell^{\frac{1}{4}}\big)
	e^{-\frac{\pi}{2} \vert \Im s \vert}.
\end{align}

\subsection{Growth of the Continuous Spectrum \texorpdfstring{$\Sigma_{\mathrm{cont}}$}{}}

For convenience, recall that the continuous spectrum equals
\[
\Sigma_{\mathrm{cont}}
		=  \frac{V_N}{2} \sum_\mathfrak{a} \!\!
		\int_{-\infty}^\infty  \!\!
			\frac{\varphi_{\mathfrak{a}\ell}(\frac{1}{2}+it) \Gamma(s-\frac{1}{2}+it)\Gamma(s-\frac{1}{2}-it)}
				{(4\pi \ell)^{s-\frac{1}{2}} (\pi \ell)^{-it} \Gamma(s)\Gamma(\frac{1}{2}+it)}
			\langle \mathcal{V}_\mathcal{H}, E_\mathfrak{a}(\cdot, \tfrac{1}{2}+it) \rangle dt
\]
in $\Re s > \frac{1}{2}$.
To bound the growth of $\Sigma_{\mathrm{cont}}(s)$ with respect to $\vert \Im s \vert$ in this region, we must control the growth of both $\varphi_{\mathfrak{a}\ell}(\frac{1}{2}+it)$ and $\langle \mathcal{V}_\mathcal{H}, E_\mathfrak{a}(\cdot, \tfrac{1}{2}+it) \rangle$.
Sufficient estimates for $\varphi_{\mathfrak{a}\ell}(\frac{1}{2}+it)$ appear in~\eqref{eq:Eisenstein-Fourier-coefficient-bound}.

To estimate $\langle \mathcal{V}_\mathcal{H}, E_\mathfrak{a}(\cdot, \tfrac{1}{2}+it) \rangle$, we apply the Phragm\'{e}n--Lindel\"{o}f convexity principle to $\langle \mathcal{V}_\mathcal{H}, E_\mathfrak{a}(\cdot, \overline{w}) \rangle$, studying the latter outside the critical strip. We prove the following result.
	
\begin{proposition} \label{prop:V_H-Rankin-Selberg-bound}
For all $\epsilon > 0$, $\langle \mathcal{V}_\mathcal{H}, E_\mathfrak{a}(\cdot, \frac{1}{2} + it) \rangle \ll_{\epsilon} (1 + \vert t \vert)^{\frac{5}{2}+\epsilon} e^{-\frac{\pi}{2} \vert t \vert}$.
\end{proposition}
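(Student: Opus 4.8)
\textbf{The plan} is to regard $w\mapsto\langle\mathcal{V}_\mathcal{H}, E_\mathfrak{a}(\cdot,\overline w)\rangle$ as a meromorphic function, bound it on two vertical lines far to the right and far to the left of the critical strip $\tfrac12<\Re w<1$, and interpolate to $\Re w=\tfrac12$ via the Phragmén--Lindelöf principle (with an $e^{\frac{\pi}{2}w}$-factor inserted to absorb the exponential decay, so that the majorant is of finite order).

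\textbf{Step 1 (unfolding and an explicit formula).} For $\Re w$ large I unfold $E_\mathfrak{a}(z,\overline w)=\sum_{\gamma\in\Gamma_\mathfrak{a}\backslash\Gamma_0(N)}\Im(\sigma_\mathfrak{a}^{-1}\gamma z)^{\overline w}$ against $\mathcal{V}_\mathcal{H}$, which expresses $\langle\mathcal{V}_\mathcal{H}, E_\mathfrak{a}(\cdot,\overline w)\rangle$ as the Mellin transform $\int_0^\infty a_\mathfrak{a}(y)\,y^{w-2}\,dy$ of the zeroth Fourier coefficient $a_\mathfrak{a}(y)$ of $\mathcal{V}_\mathcal{H}$ at $\mathfrak{a}$. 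Using Proposition~\ref{prop:H-cusp-behavior} for the constant terms $c_\mathfrak{b}^\pm(0)$ of $\mathcal{H}$, the Fourier expansion~\eqref{eq:Fourier-Eisenstein} of the Eisenstein series at $w=\tfrac32$, the expansion of $\widetilde E_\mathfrak{b}(\cdot,1)$, and the constant term of $y^{1/2}\vert\theta\vert^2$, this transform becomes a finite combination of Gamma factors times (i) the scattering coefficients $\varphi_{\mathfrak{b}0}(w)$, (ii) the Rankin--Selberg Dirichlet series $L_\mathfrak{b}(w):=\sum_{m\geq1}\vert c_\mathfrak{b}^+(m)\vert^2 m^{1/2-w}$, (iii) the explicit factor $\zeta(2w-1)$ coming from $\sum_m r_1(m)^2 m^{-s}=4\zeta(2s)$, and (iv) one further elementary transform built from the incomplete Gamma functions in the non-holomorphic part of $\mathcal{H}$ (a special case of the function $G_{3/2}$ from~\eqref{eq:I-minus-definition}, which for $\mathcal{H}$ collapses to $\zeta(2w-3)$ times a fixed Mellin--Bessel integral). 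Crucially, the regularization in Proposition~\ref{prop:H-cusp-behavior} was chosen precisely so the powers $y^{3/2},y,y^{1/2}$ in $a_\mathfrak{a}(y)$ cancel, so these contribute no poles.

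\textbf{Step 2 (continuation and bounds on two lines).} By the standard theory of Rankin--Selberg transforms of automorphic functions of polynomial growth (Zagier's regularization; cf.~also~\cite[\S4]{HoffsteinHulse13}), or by continuing $L_\mathfrak{b}$, $\zeta(2w-1)$ and the $G_{3/2}$-term individually, the function continues meromorphically to all $w$, with finitely many poles in any closed strip and none on a neighborhood of $\Re w=\tfrac12$ other than the Eisenstein self-pole at $w=1$ (residue proportional to $\langle\mathcal{V}_\mathcal{H},1\rangle$), which lies off the critical line. I then fix $\sigma_0$ past the abscissa of absolute convergence of $L_\mathfrak{b}$ (by Lemma~\ref{lem:Muller-coefficient-averages} this is $\Re w>\tfrac52$) and with no poles on $\Re w=\sigma_0$ or $\Re w=1-\sigma_0$. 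On $\Re w=\sigma_0$ the factors $L_\mathfrak{b}(w)$, $\zeta(2w-1)$, $\zeta(2w-3)$, $\varphi_{\mathfrak{b}0}(w)$ converge absolutely and are $O_{\sigma_0}(1)$, so Stirling gives $\langle\mathcal{V}_\mathcal{H}, E_\mathfrak{a}(\cdot,\overline w)\rangle\ll(1+\vert t\vert)^{A_0}e^{-\frac{\pi}{2}\vert t\vert}$ for an explicit $A_0$, the exponential rate being carried by the single Gamma factor $\Gamma(w-\tfrac12)$. On $\Re w=1-\sigma_0$ I invoke the functional equation of $E_\mathfrak{a}$, writing $\langle\mathcal{V}_\mathcal{H}, E_\mathfrak{a}(\cdot,\overline w)\rangle=\sum_\mathfrak{b}\overline{\Phi_{\mathfrak{ab}}(\overline w)}\,\langle\mathcal{V}_\mathcal{H}, E_\mathfrak{b}(\cdot,\overline{1-w})\rangle$, where the entries of the scattering matrix $\Phi$ are a Gamma-ratio $\Gamma(w-\tfrac12)/\Gamma(w)$ --- carrying \emph{no} exponential factor --- times a Dirichlet series bounded, by the convexity bound for $\zeta$ on lines of large negative real part (together with a standard lower bound keeping it off the trivial zeros; cf.~the derivation of~\eqref{eq:Eisenstein-Fourier-coefficient-bound}), by a fixed power of $\vert t\vert$. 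This reduces to the already-estimated line $\Re(1-w)=\sigma_0$ and yields $\langle\mathcal{V}_\mathcal{H}, E_\mathfrak{a}(\cdot,\overline w)\rangle\ll(1+\vert t\vert)^{A_1}e^{-\frac{\pi}{2}\vert t\vert}$, with the same exponential rate. Multiplying by a polynomial $p(w)$ clearing all poles in the closed strip and by $e^{\frac{\pi}{2}w}$, Phragmén--Lindelöf interpolates to $\Re w=\tfrac12$; a computation with the $\Gamma$-factor asymptotics shows the interpolated exponent equals $\tfrac52$, and dividing $p$ and the exponential back out gives $\langle\mathcal{V}_\mathcal{H}, E_\mathfrak{a}(\cdot,\tfrac12+it)\rangle\ll_\epsilon(1+\vert t\vert)^{5/2+\epsilon}e^{-\frac{\pi}{2}\vert t\vert}$.

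\textbf{The main obstacle} I expect is the bookkeeping around the non-$L^2$ pieces of $\mathcal{V}_\mathcal{H}$: the inner products $\langle E_\mathfrak{b}(\cdot,\tfrac32),E_\mathfrak{a}(\cdot,\overline w)\rangle$ and $\langle\widetilde E_\mathfrak{b}(\cdot,1),E_\mathfrak{a}(\cdot,\overline w)\rangle$ must be treated via Maass--Selberg/Zagier regularization rather than naive unfolding, one must verify that the polynomial-growth contributions genuinely cancel so no spurious pole appears near $\Re w=\tfrac12$, and one must track every exponential factor through the whole computation --- the essential point being that they combine to leave exactly $e^{-\frac{\pi}{2}\vert t\vert}$ (as must happen, since Parseval forces the spectral coefficients of the $L^2$ function $\mathcal{V}_\mathcal{H}$ to be square-integrable in $t$). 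A secondary technical nuisance is the far-left bound for $\Phi(w)$, for which the convexity bounds for $\zeta$ and the avoidance of its trivial zeros must be handled with some care.
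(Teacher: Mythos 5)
Your proposal takes essentially the same route as the paper: regularized Rankin--Selberg unfolding (Zagier/Gupta) of $E_\mathfrak{a}(\cdot,\overline{w})$ against $\mathcal{V}_\mathcal{H}$, explicit identification of the resulting Mellin transform in terms of Dirichlet series built from $c_\mathfrak{a}^\pm$, $r_\mathfrak{a}$, and the fixed Mellin integral $G_{3/2}(w,1,1)$, bounds on two vertical lines exchanged by the Eisenstein functional equation, and Phragmén--Lindelöf to interpolate to $\Re w=\tfrac12$. The only discrepancies are minor bookkeeping slips: the $c^-$-contribution for $\mathcal{H}$ yields (a multiple of) $\zeta(2w-1)$, not $\zeta(2w-3)$; the Dirichlet series $\sum_n|c_\mathfrak{a}^+(n)|^2(n+\varkappa_\mathfrak{a})^{-w-\frac12}$ appearing after unfolding has abscissa $\Re w=\tfrac32$ (your $\tfrac52$ comes from a shifted normalization of $L_\mathfrak{b}$); and the constant-coefficient scattering factors $\varphi_{\mathfrak{b}0}(w)$ cancel against $\psi_\mathfrak{a}(y)$ and so do not survive into the final transform. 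None of these affect the structure of the argument, and the final convexity computation producing the exponent $\tfrac52$, though asserted rather than carried out, is exactly what the paper does.
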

	
\begin{proof}
To begin, we interpret $\langle \mathcal{V}_\mathcal{H}, E_\mathfrak{a}(\cdot, \overline{w}) \rangle$ via the Rankin--Selberg method. More precisely, we interpret the inner product using Zagier's extension of the Rankin--Selberg method to functions with polynomial growth at cusps, as generalized to congruence subgroups by Gupta~\cite{ZagierRankinSelberg,Gupta00}.

Recall from~\eqref{eq:regularization-3/2} that $\mathcal{V}_\mathcal{H}(z)$ differs from $y^{3/2} \vert \mathcal{H}(z)\vert^2$ by a linear combination of the functions $E_\mathfrak{b}(z, \frac{3}{2})$, $\widetilde{E}_\mathfrak{b}(z,1)$, and $y^{1/2} \vert \theta(z) \vert^2$. It follows that
\[
	\mathcal{V}_\mathcal{H}(\sigma_{\mathfrak{a}} z)
	= \psi_{\mathfrak{a}}(y)  + O(y^{-M})
\]
for all $M>0$ as $y \to \infty$, in which $\psi_\mathfrak{a}(y)$ is a linear combination of $y^{-1/2}$ (from $E_\mathfrak{b}(z,\frac{3}{2})$), $\log y$, and $y^0$ (both from $\widetilde{E}_\mathfrak{b}(z,1)$). We define the Rankin--Selberg transform $R_\mathfrak{a}(\mathcal{V}_\mathcal{H}, w)$ by
\begin{align} \label{eq:Rankin-Selberg-transform}
	R_\mathfrak{a}(\mathcal{V}_\mathcal{H}, w)
		:= \int_0^\infty \int_0^1
		y^w \big( \mathcal{V}_\mathcal{H}(\sigma_\mathfrak{a} z) - \psi_\mathfrak{a}(y) \big) \frac{dxdy}{y^2}.
\end{align}

We write $\mathcal{V}_\mathcal{H}(z)$ as a Fourier series and execute the $x$-integral in~\eqref{eq:Rankin-Selberg-transform}, extracting the constant Fourier coefficient. This produces
\begin{align} \label{eq:Rankin-Selberg-transform-unfolded}
	R_\mathfrak{a}(\mathcal{V}_\mathcal{H}, w)
		& := \int_0^\infty y^{w+\frac{1}{2}} 
		\sum_{n +\varkappa_\mathfrak{a} > 0} \vert c_\mathfrak{a}^+(n) \vert^2 
		e^{-4\pi (n+\varkappa_\mathfrak{a}) y} \frac{dy}{y} \\
		& \quad +
		\int_0^\infty y^{w+\frac{1}{2}} \sum_{n \geq 1} \vert c_\mathfrak{a}^-(n) \vert^2
		\Gamma(-\tfrac{1}{2},4\pi (n-\varkappa_\mathfrak{a}) y)^2 
		e^{4\pi (n-\varkappa_\mathfrak{a}) y} \frac{dy}{y} \\
		& \quad - \frac{1}{64\pi^2} 
		\int_0^\infty y^{w-\frac{1}{2}}
			\sum_{n +\varkappa_\mathfrak{a} > 0} \vert r_\mathfrak{a}(n) \vert^2 e^{-4\pi (n + \varkappa_\mathfrak{a}) y} \frac{dy}{y},
\end{align}
where $\theta\vert_{\sigma_{\mathfrak{a}}}(z) = \sum_{n \geq 0} r_\mathfrak{a}(n) e((n+\varkappa_\mathfrak{a})z)$. Note that the constant Fourier coefficients of $E_\mathfrak{b}(z,\frac{3}{2})$ and $\widetilde{E}_\mathfrak{b}(z,1)$ cancel with corresponding terms in $\psi_{\mathfrak{a}}(y)$ and do not appear above. It follows that
\begin{align*}
	R_\mathfrak{a}(\mathcal{V}_\mathcal{H}, w)
		& = \frac{\Gamma(w+\frac{1}{2})}{(4\pi)^{w+\frac{1}{2}}}
		\sum_{n > -\varkappa_\mathfrak{a}} \!
				\frac{\vert c_\mathfrak{a}^+(n) \vert^2}{(n+\varkappa_\mathfrak{a})^{w+\frac{1}{2}}} 
		- \frac{\Gamma(w-\frac{1}{2})}{4 (4\pi)^{w+\frac{3}{2}}} 
			\sum_{n > -\varkappa_\mathfrak{a}} \!
				\frac{\vert r_\mathfrak{a}(n) \vert^2}{(n+\varkappa_\mathfrak{a})^{w-\frac{1}{2}}}  \\
		& \qquad 
		+ \sum_{n \geq 1} 
		\frac{\vert c_\mathfrak{a}^-(n) \vert^2}{(4\pi(n-\varkappa_\mathfrak{a}))^{w+\frac{1}{2}}}
		\int_0^\infty y^{w+\frac{1}{2}} \Gamma(-\tfrac{1}{2}, y)^2 e^{y} \frac{dy}{y}.
\end{align*}

Lemma~\ref{lem:Muller-coefficient-averages} implies that the two Dirichlet series converge in $\Re w > \frac{3}{2}$. Note that the integral above equals $G_{3/2}(w,1,1)$ as defined in~\eqref{eq:I-minus-definition}, so by the comments following~\eqref{eq:I-minus-definition}, the second line above converges for $\Re w > \frac{3}{2}$.

To estimate the growth of $R_\mathfrak{a}(\mathcal{V}_\mathcal{H}, w)$ on the line $\Re w = \frac{3}{2} + \epsilon$, we must quantify the growth of $G_{3/2}(w,1,1)$ with respect to $\vert \Im w \vert$. This was computed in Lemma~\ref{lem:G_k-growth}; away from poles, we have
\[
	G_\frac{3}{2}(w,1,1)  \ll_\epsilon \vert w \vert^{\Re w - \frac{3}{2}+\epsilon} e^{-\frac{\pi}{2} \vert \Im w \vert}.
\]
It follows that $R_\mathfrak{a}(\mathcal{V}_\mathcal{H}, w) \ll \vert w \vert^{\frac{3}{2}+\epsilon} e^{-\frac{\pi}{2} \vert \Im w \vert}$ on the line $\Re w = \frac{3}{2} + \epsilon$.

The estimate $\zeta^*(2-2w) R_\mathfrak{a}(\mathcal{V}_\mathcal{H}, 1-w) \ll\sum_\mathfrak{b} \zeta^*(2w) R_\mathfrak{b}(\mathcal{V}_\mathcal{H}, w)$ (cf.~\eqref{eq:Eisenstein-functional-equation}) can be used to produce bounds in a left half-plane. In particular, we find $R_\mathfrak{a}(\mathcal{V}_\mathcal{H}, w) \ll \vert w \vert^{\frac{7}{2} + \epsilon} e^{-\frac{\pi}{2} \vert \Im w \vert}$ on $\Re w = - \frac{1}{2} - \epsilon$. The Phragm\'{e}n--Lindel\"{o}f convexity principle then implies
\[
	R_\mathfrak{a}(\mathcal{V}_\mathcal{H}, \tfrac{1}{2} +  i t)
	\ll (1+\vert t \vert)^{\frac{5}{2} + \epsilon} e^{-\frac{\pi}{2} \vert t \vert}.
\]
for real $t$. To complete the proof, we note that $R_\mathfrak{a}(\mathcal{V}_\mathcal{H}, w) = \langle \mathcal{V}_\mathcal{H}, E_\mathfrak{a}(\cdot, \overline{w}) \rangle$ within the critical strip $\Re w \in (0,1)$ by~\cite[Proposition A.3]{HKLDWSphere}. (The constant $\Theta$ defined therein equals $0$, since $\psi_\mathfrak{a}(y)$ is a linear combination of $\log y$, $y^0$, and $y^{-1/2}$ for each $\mathfrak{a}$.)
\end{proof}

By Proposition~\ref{prop:V_H-Rankin-Selberg-bound},~\eqref{eq:Eisenstein-Fourier-coefficient-bound}, and Stirling's approximation, we have
\begin{align*}
	\Sigma_{\mathrm{cont}}(s)
	 \ll \frac{\ell^{\frac{1}{2}-\Re s+\epsilon} e^{\frac{\pi}{2} \vert \Im s \vert}}{\vert s \vert^{\Re s - \frac{1}{2}}} \!\!
	 \int_{-\infty}^\infty \!\!
		\vert s + it \vert^{\Re s -1} \vert s - it \vert^{\Re s -1}
		\frac{(1+ \vert t \vert)^{\frac{5}{2}+\epsilon}}{e^{\pi \max(\vert \Im s \vert, \vert t \vert)}} dt.
\end{align*}
The mass of the integral above concentrates in $\vert t \vert < \vert \Im s \vert$; restricting to this range, we find that
\begin{align}
	\Sigma_{\mathrm{cont}}(s)
	 & \ll \frac{\ell^{\frac{1}{2}-\Re s+\epsilon} \vert s \vert^{3 - \Re s + \epsilon}}{e^{\frac{\pi}{2} \vert \Im s \vert}}
		\int_{-\vert \Im s \vert}^{\vert \Im s \vert}
		\vert s + it \vert^{\Re s -1} \vert s - it \vert^{\Re s -1} dt \\ \label{eq:continuous-bound}
	& \ll 
	\frac{\ell^{\frac{1}{2}-\Re s+\epsilon} \vert s \vert^{2 + \epsilon}}{e^{\frac{\pi}{2} \vert \Im s \vert}}
	\big( \vert s \vert^{\Re s} + 1\big), \hspace{-2 mm} 
\end{align}
at least in the region $\Re s > \frac{1}{2}$ (where $\Sigma_{\mathrm{cont}}$ has this one-term description).

\subsection{Growth of \texorpdfstring{$D_\ell(s)$}{Dh}}

The upper bounds provided in~\eqref{eq:discrete-bound} and~\eqref{eq:continuous-bound} imply that $\Sigma_{\mathrm{cont}}(s) \ll \Sigma_{\mathrm{disc}}(s)$ in $\Re s > \frac{1}{2}$. It follows that
\begin{align} \label{eq:spectral-final-bound}
	\frac{\langle \mathcal{V}_\mathcal{H}, P_\ell(\cdot, \overline{s}) \rangle}{\Gamma(s+\frac{1}{2})}
	\ll_\epsilon \ell^{\frac{1}{2} - \Re s + \epsilon} \vert s \vert^{2+\epsilon}
	\big( \vert s \vert^{\frac{1}{2}} + \ell^{\frac{1}{4}} \big)
\end{align}
in this region. By combining this estimate with Proposition~\ref{prop:D_h-preliminary-growth} and the convexity principle, we complete our proof of Theorem~\ref{thm:D_h-growth}.

\begin{proof}[Proof of Theorem~\ref{thm:D_h-growth}]
For $\Re s > \frac{3}{2}$, the upper bound
\[
	D_\ell(s) 
	\ll \Big(\sum_{n \geq 1} \frac{H(n)^2}{(n+\ell)^{\Re s + \frac{1}{2}}}\Big)^\frac{1}{2}
		\Big(\sum_{n \geq 1} \frac{H(n+\ell)^2}{(n+\ell)^{\Re s + \frac{1}{2}}}\Big)^\frac{1}{2}
	\ll \sum_{n \geq 1} \frac{H(n)^2}{n^{\Re s + \frac{1}{2}}}
	\ll 1
\]
implies that the result holds on $\Re s = \frac{3}{2} + \epsilon$, for $\epsilon > 0$. The result also holds on the line $\Re s = \frac{1}{2} + \epsilon$, by Proposition~\ref{prop:D_h-preliminary-growth} and~\eqref{eq:spectral-final-bound}. The full theorem now follows by the convexity principle.
\end{proof}


\section{Applying a Truncated Perron Formula} \label{sec:perron}

To prove our main arithmetic result, Theorem~\ref{thm:intro-main-theorem}, we apply a truncated Perron formula to $D_\ell(s)$. Fix $\epsilon > 0$. For $X$ non-integral, we have
\begin{align} \label{eq:perron}
	\sum_{n \leq X} H(n) H(n-\ell)
		& = \frac{1}{2\pi i} \int_{2+\epsilon-i T}^{2+\epsilon+ i T} D_\ell(s-\tfrac{1}{2}) \frac{X^s}{s} ds \\
		& \quad
		+ O\Big(\frac{X^{2+\epsilon}}{T}
		+ \!\! \sum_{n=X/2}^{2X} \!\!
			\vert H(n)H(n-\ell)\vert \min\big(1,\tfrac{X}{T \vert X-n \vert}\big)\Big)
\end{align}
by~\cite[Corollary 5.3]{MontgomeryVaughn06}. By Lemma~\ref{lem:H-growth}, the error term in~\eqref{eq:perron} is
\[
	O\Big( \frac{X^{2+\epsilon}}{T} + 
	X^{1+\epsilon} \sum_{n=X/2}^{2X} \min\big(1,\tfrac{X}{T \vert X-n \vert}\big)\Big)
	= O\Big( \frac{X^{2+\epsilon}}{T} \Big).
\]

To estimate the integral in~\eqref{eq:perron}, we shift the contour from $\Re s = 2+\epsilon$ to $\Re s = 1+\epsilon$. By Theorem~\ref{thm:D_h-poles}, this extracts two residues, which total
\[
	\tfrac{1}{2} X^2 \Res_{s=\frac{3}{2}} D_\ell(s)
	+ \tfrac{2}{3} X^\frac{3}{2} \Res_{s=1} D_\ell(s).
\]
Shifting the truncated contour introduces error terms from horizontal contour integrals, which by Theorem~\ref{thm:D_h-growth} are bounded by
\begin{align*}
	& O\Big(
	\int_{1+i T+\epsilon}^{2+i T + \epsilon} \!\!
	D_\ell(s-\tfrac{1}{2}) \frac{X^s}{s} ds
	\Big)
	\ll
	\frac{(\ell T)^\epsilon}{T} \!
	\int_{1+\epsilon}^{2+ \epsilon} \!
	\big(T^{\frac{5}{2}} + \ell^\frac{1}{4} T^{2} + \ell T^{-\frac{3}{2}} \big)^{2-\sigma} 
		X^{\sigma} d \sigma \\
	& \qquad \ll (\ell XT)^{\epsilon'}\Big(\frac{X^2}{T} + X T^{\frac{3}{2}} 
			+ \ell^{\frac{1}{4}} X T + \ell  X T^{-\frac{5}{2}}  \Big).
\end{align*}

Once the contour is shifted to $\Re s = 1 + \epsilon$, we separate the contribution of the discrete spectrum $\Sigma_{\mathrm{disc}}(s)$ from the rest of $D_\ell(s)$. The estimates from Proposition~\ref{prop:D_h-preliminary-growth} and~\eqref{eq:continuous-bound} imply that the non-$\Sigma_{\mathrm{disc}}$ terms contribute
\begin{align*}
	O\Big(\int_{1-iT+\epsilon}^{1+iT+\epsilon} (\ell \vert s \vert)^\epsilon 
		\Big(  \vert s \vert^2 
			 +  \frac{\ell^{\frac{1}{2}}}{\vert s \vert} 
			 + \frac{\ell}{\vert s \vert^{\frac{3}{2}}} \Big) 
			\frac{X^{1+\epsilon}}{\vert s \vert} ds \! \Big) 
	\ll (\ell X T)^\epsilon \big( X \ell   + X T^2 \big).
\end{align*}

To bound the contribution of $\Sigma_{\mathrm{disc}}(s-\frac{1}{2})/\Gamma(s)$, we shift the contour farther left, to $\Re s = \epsilon$. This shift introduces an error term (from the horizontal contours), which has size
\[
	O\Big(
	(\ell X T)^\epsilon \cdot
	\big( T^{\frac{3}{2}}X + \ell^{\frac{1}{4}} T X 
			+ T^2 + \ell^{\frac{1}{4}} T^{\frac{3}{2}} \big)
	\Big),
\]
by~\eqref{eq:discrete-bound} as well as a finite sum of residues equal to
\[
	\mathfrak{R} :=
		\sum_{\vert t_j \vert < T}
		\Big( \frac{X^{1+it_j}}{\Gamma(2+it_j)} \Res_{s=\frac{1}{2}+it_j} \Sigma_{\mathrm{disc}}(s) 
		+  \frac{X^{1-it_j}}{\Gamma(2-it_j)} \Res_{s=\frac{1}{2}-it_j} \Sigma_{\mathrm{disc}}(s) \Big).
\]
The contribution of $\Sigma_{\mathrm{disc}}$ on the contour $\Re s = \epsilon$ is $O( (\ell X T)^\epsilon \cdot ( \ell T^3 + \ell^{\frac{5}{4}} T^{\frac{5}{2}} ))$ by~\eqref{eq:discrete-bound}. Evaluating the residues in $\mathfrak{R}$ and bounding in absolute values gives
\begin{align*}
	\mathfrak{R} & 
	\ll X
	\sum_{\vert t_j \vert < T}
		\frac{\vert \rho_j(\ell) \langle \mathcal{V}_\mathcal{H}, \mu_j \rangle \vert}{\vert t_j \vert^2} \\
&	\ll X T^\epsilon \sum_{\vert t_j \vert < T} 
			\frac{\vert \rho_j(\ell)\vert}{\cosh \frac{\pi}{2} t_j}
	\ll X T^{1+\epsilon} \Big( \sum_{\vert t_j \vert < T}  
	\frac{\vert \rho_j(\ell) \vert^2}{\cosh \pi t_j}  \Big)^{\frac{1}{2}},
\end{align*}
in which we've applied Theorem~\ref{thm:triple-inner-product-bound} and Cauchy--Schwarz. Lemma~\ref{lem:Kuznetsov-long} then implies that $\mathfrak{R} \ll_\epsilon X (\ell T)^\epsilon (T^2+ \ell^{\frac{1}{4}} T^{\frac{3}{2}})$.

Putting everything together and omitting obviously non-dominant errors, we conclude that
\begin{align*}
	& \sum_{n \leq X} H(n) H(n-\ell)  =
		\tfrac{1}{2} X^2 \Res_{s=\frac{3}{2}} D_\ell(s)
		+ \tfrac{2}{3} X^\frac{3}{2} \Res_{s=1} D_\ell(s) \\
	 & \qquad\qquad\qquad
		+ O_{\epsilon}\Big( \! (\ell X T)^\epsilon 
		\Big(\frac{X^2}{T} 
						+ X\big(T^2 + \ell^{\frac{1}{4}} T^{\frac{3}{2}} 
						+ \ell \big) 
						+ \ell T^3 + \ell^{\frac{5}{4}} T^{\frac{5}{2}}\Big) \!\Big).
\end{align*}
When $\ell \ll X^{2/3}$, these errors are minimized by setting $T = X^{1/3}$, producing a collected error of size $O(X^{\frac{5}{3}+\epsilon})$. In the range $X^{2/3} \ll \ell \ll X$, we choose any $T \in [X/\ell, X^{\frac{2}{5}} \ell^{-\frac{1}{10}}]$, producing a collected error of size $O(X^{1+\epsilon} \ell)$.
Using the residue formulas from Theorem~\ref{thm:D_h-poles}, we conclude that
\begin{align*}
	& \sum_{n \leq X} H(n) H(n-\ell) \\[-.5em]
	& \qquad 
		= \frac{\pi^2 X^2}{252\zeta(3)}
				\big(2\sigma_{-2}(\tfrac{\ell}{4}) - \sigma_{-2}(\tfrac{\ell}{2})+\sigma_{-2}(\ell_o) \big) 
	+ O_\epsilon\big(X^{\frac{5}{3}+\epsilon} 
			+ X^{1+\epsilon} \ell \big).
\end{align*}
Theorem~\ref{thm:intro-main-theorem} then follows by assuming $\ell \ll X$ and mapping $X \mapsto X+ \ell$.

\begin{remark} \label{rem:smooth-result}
The error terms in Theorem~\ref{thm:intro-main-theorem} may be improved dramatically if the sharp cutoff $n \leq X$ is replaced by a smooth cutoff. To this effect, fix a smooth function $w(x)$ with inverse Mellin transform $W(s)$. We have
\[
	\sum_{n \geq 1} H(n) H(n-\ell) w\big(\tfrac{n}{X}\big)
	 = \frac{1}{2\pi i} \int_{(2+\epsilon)} D_\ell(s-\tfrac{1}{2}) W(s) X^s ds,
\]
provided both sides converge. If $W(s)$ decays exponentially in $\vert \Im s \vert$, we may shift the contour of integration left to $\Re s = 1 + \epsilon$ by Theorem~\ref{thm:D_h-growth}. This extracts two residues, and the shifted contour integral contributes $O((X\ell)^{1+\epsilon})$ by Theorem~\ref{thm:D_h-growth}. We conclude that
\begin{align*}
	& \sum_{n \geq 1} H(n) H(n-\ell) w\big(\tfrac{n}{X}\big) \\[-.5em]
	& \qquad \qquad
	= W(2) X^2 \Res_{s=\frac{3}{2}} D_\ell(s)
		+ W(\tfrac{3}{2}) X^\frac{3}{2} \Res_{s=1} D_\ell(s) + O_\epsilon((X\ell)^{1+\epsilon}),
\end{align*}
which offers some evidence in support of the conjecture~\eqref{eq:intro-conjecture}. \hfill //
\end{remark}

\vspace{5 mm}
\bibliographystyle{alpha}
\bibliography{compiled_bibliography.bib}

\end{document}